\newcommand{\tensor}[1]{\mbox{\boldmath{$#1$}\unboldmath}}
\newcommand{\bu}[1]{{\underline{\boldsymbol #1}}}
\newcommand{\ul}[1]{{\underline{ #1}}}
\newtheorem{problem}{Problem}
\newtheorem{remark}{Remark}
\newtheorem{lemma}{Lemma}
\newtheorem{theorem}{Theorem}
\newcommand{\keywords}{\textbf{Key words.}  }
\newcommand{\acknowledgment}{\textbf{Acknowledgments.}  }
\title{A posteriori estimator for the adaptive solution of a quasi-static fracture phase-field model with irreversibility constraints}
\author{Mirjam Walloth and Winnifried Wollner\\*[3\baselineskip]
\small Fachbereich Mathematik, TU Darmstadt \\
\small  Dolivostra{\ss}e 15, 64293 Darmstadt\\
\small {\it  walloth@mathematik.tu-darmstadt.de, wollner@mathematik.tu-darmstadt.de}}
\date{\today}
\begin{document}
\maketitle
\begin{abstract}
  Within this article, we develop a residual type a posteriori
  error estimator for a time discrete quasi-static
  phase-field fracture model. Particular emphasize is given to
  the robustness of the error estimator for the variational inequality
  governing the phase-field evolution with respect to the phase-field
  regularization parameter $\epsilon$.
  The article concludes with numerical examples highlighting the performance
  of the proposed a posteriori error estimators on three standard
  test cases; the single edge notched tension and shear test as well as the
  L-shaped panel test.
 \end{abstract}
\keywords{residual-type a posteriori error estimator, Galerkin functional, phase-field fracture, robust a posteriori error estimation}
\thispagestyle{plain}

\section{Introduction}
Modeling of fracture propagation by variational models has a long history.
~\cite{Francfort_Marigo_1998} provided a variational formulation of Griffith's model for
brittle fracture~\cite{Griffith_1921}. See also~\cite{Bourdin_Francfort_Marigo_2008} for
a summary. More recently, such phase-field models
have increased in complexity incorporating different phenomena, see, e.g.,~\cite{Ambati_Gerasimov_Lorenzis_2015,Borden_Verhoosel_Scott_Hughes_Landis_2012,Miehe_Welschinger_Hofacker_2010a,
Miehe_Welschinger_Hofacker_2010,Schlueter_Willenbuecher_Kuhn_Mueller_2014}
and higher order methods have been proposed, e.g.,~\cite{Borden_Hughes_Landis_Verhoosel_2014}.

Since the interface, where a transition between the broken and unbroken
material occurs, is often very narrow adaptive finite element methods have been
proposed for the solution of such problems. 
\cite{Burke_Ortner_Sueli_2010}~started by showing that an alternating refinement
procedure according to a posteriori error estimators for the elastic material and the
phase-field equation in each time step gives rise to a convergent algorithm.
This analysis was extended to more general energy functionals in~\cite{Burke_Ortner_Sueli_2013}.
Improvements towards anisotropic refinements where proposed
in~\cite{Artina_Fornasier_Micheletti_Perotto_2015}, all these contributions dealt with the
irreversibility condition by fixing the phase-field to $0$, i.e., fracture, once a tolerance value
had been reached by the phase-field variable. Thereby avoiding a variational inequality
for the description of the time-discrete fracture.
More heuristic methods, such as a predictor-corrector scheme based on refinement
near the computed fracture~\cite{Heister_Wheeler_Wick_2015} or dual-weighted residual error
estimates~\cite{Wick_2016} have also been proposed.

Within this article, we will analyze the residual based error estimator
proposed in~\cite{Mang_Walloth_Wick_Wollner_2019} for the
a posteriori error estimation within a phase-field fracture model.
In contrast to prior work the analysis will treat the irreversibility
condition of the phase-field by a variational inequality. Due to the modeling
and time discretization this variational inequality is a singularly perturbed
obstacle problem and consequently we will show that our estimates are
robust~\cite{Verfuerth_1998b} with respect
to the singular perturbation. Moreover, we will sketch how stress-splitting
approaches~\cite{Miehe_Welschinger_Hofacker_2010a} can be incorporated into the error estimates.

Various methods for a posteriori error estimation of the obstacle problem
can be found in the literature, see,
e.g.,~\cite{Chen_Nochetto_2000,Veeser_2001,Bartels_Carstensen_2004,Weiss_Wohlmuth_2010,Zou_Veeser_Kornhuber_Graeser_2011,Gudi_Porwal_2014}.
Here we focus on the approach by~\cite{Veeser_2001} utilizing a suitable Galerkin functional and
a useful definition of the discrete constraining forces. 

The rest of the paper is structured as follows. In Section~\ref{Sec:Problem}, we will introduce the
time-discrete phase-field fracture model under consideration and briefly state its discretization.
In Section~\ref{Sec:EstimatorVI}, we introduce some suitable auxiliary problems utilized to
decouple the discretization error for the elasticity equation and the phase-field inequality.
We continue by defining a discrete counterpart of the constraining force and state the error estimator for the
phase-field variable and the Lagrange multiplier for the obstacle.
In Section~\ref{Sec:Reliability}, we show the robust reliability of the proposed estimator.
This is complemented by the efficiency in Section~\ref{Sec:Efficiency}, indeed efficiency is not always robust. It will become robust once the semi-contact zone, near the fracture,
is sufficiently resolved. For completeness, in Section~\ref{Sec:EstimatorEqu}, we state a standard
residual estimator for the elasticity equation in each time step.
The paper concludes with numerical examples in Section~\ref{Sec:Numerics}. Here we demonstrate
the robustness of the proposed error estimators on three standard test cases, the single edge
notched shear and tension tests as well as an L-shaped panel test.

\section{A quasi-static fracture phase-field model}\label{Sec:Problem}
Let $\Omega\subset\mathbb{R}^2$ be a polygonal domain of a linear elastic body in which a lower dimensional fracture $\mathfrak{C}$ may exist and propagate. Let $I=(0,T)$ be the time interval. The displacements are given by the function $\boldsymbol{u}: \Omega\times I \rightarrow \mathbb{R}^2$. Based on the phase-field approach the fracture is approximated by the phase-field variable $\varphi: \Omega\times I \rightarrow [0,1]$ where $\varphi=1$ characterizes the unbroken material and $\varphi=0$ the broken material. The intermediate values constitute a smooth transition zone dependent on a small regularization parameter $\epsilon$. The physics of the underlying problem ask to enforce that the fracture cannot heal. This condition  is called irreversibility condition. 

The boundary $\Gamma = \partial\Omega$ is subdivided in Dirichlet $\Gamma^D$ and Neumann boundary $\Gamma^N$ where we enforce Dirichlet and Neumann boundary values for the displacements $\boldsymbol{u}$. For the phase-field variable, we have Neumann values $\boldsymbol{\nabla}\varphi\cdot \tensor{n}= 0$ on the whole boundary where $\boldsymbol{n}$ is the unit outward normal to the boundary.

We denote the critical energy release rate by $G_c$.
A degradation function is defined as $g(\varphi):=(1-\kappa)\varphi^2+\kappa$ where $\kappa$ is a small regularization parameter. 
The stress tensor $\boldsymbol{\sigma}(\boldsymbol{u})$ is given by
\begin{align*}
\boldsymbol{\sigma}(\boldsymbol{u}) := 2 \mu \boldsymbol{E}_{\text{lin}}(\boldsymbol{u}) + \lambda \operatorname{tr} (\boldsymbol{E}_{\text{lin}}(\boldsymbol{u})) \textbf{id}.
\end{align*} 
Here, $\lambda$ and $\mu$ are the Lam{\'e} constants, $\boldsymbol{E}_{\text{lin}}(\boldsymbol{u})$ is the linearized strain tensor:
\begin{align*}
\boldsymbol{E}_{\text{lin}}(\boldsymbol{u}):=\frac{1}{2} (\nabla \boldsymbol{u} + \nabla \boldsymbol{u}^T),
\end{align*}
and $\textbf{id}$ denotes the two-dimensional identity matrix.
Often the relation between $\boldsymbol{\sigma}$ and $\boldsymbol{E}_{\mathrm{lin}}$ is given by means of Hooke's tensor, i.e. 
\begin{align*}
\sigma_{ij}(\boldsymbol{u})= C_{ijml}(E_{\mathrm{lin}}(\boldsymbol{u}))_{ml}
\end{align*}
where $C_{ijml}$ are the components of Hooke's tensor which is symmetric, elliptic and bounded. 

We consider a time discrete formulation on
a fixed subdivision $0 =t_0 < t_1 < \ldots < t_N = T$ of the interval
$I$. We define approximations $(\boldsymbol{u}^n,\varphi^n) \approx
(\boldsymbol{u}(t_n),\varphi(t_n))$ and enforce a so-called
discrete irreversibility condition given by
$\varphi^{n}\leq\varphi^{n-1}$ for all $n = 1, \ldots, N$. The discrete irreversibility condition is an approximation of the condition that the fracture cannot heal.

In each time step, we seek the displacement variable in
$\boldsymbol{\mathcal{H}}_D^n:=\{\boldsymbol{v}\in \boldsymbol{H}^1(\Omega)\mid \mbox{tr}|_{\Gamma_D}(\boldsymbol{v})=\boldsymbol{u}_D(t^n)\mbox{ a.e. on }\Gamma_D \}$. Further, we need the test space 
$\boldsymbol{\mathcal{H}}_{0}:=\{\boldsymbol{w}\in \boldsymbol{H}^1(\Omega)\mid \mbox{tr}|_{\Gamma_D}(\boldsymbol{w})=\boldsymbol{0}\mbox{ a.e. on }\Gamma_D\}$. 
To give the weak formulation in each time step $n$, we define the feasible set $\mathcal{K}(\varphi^{n-1}):=\{\psi\in H^1(\Omega)\mid \psi\leq \varphi^{n-1}\leq 1\}$ for the phase-field variable. 
We denote the $L^2$-scalar product by $\left<\cdot,\cdot\right>$ and dual pairings by $\left<\cdot,\cdot\right>_{-1,1}$.

Thus, the weak problem formulation in each time step $n$ is given by
\begin{problem}[Weak formulation in each time step]\label{WeakFormulation}
Find $(\boldsymbol{u}^n,
\varphi^n)\in\boldsymbol{\mathcal{H}}_D^n\times\mathcal{K}(\varphi^{n-1})$
such that
\begin{equation}\label{eq:VI_orig}
\begin{aligned}
\left<g(\varphi^{n-1})\boldsymbol{\sigma}(\boldsymbol{u}^n), \boldsymbol{E}_{\mathrm{lin}}(\boldsymbol{w})\right>& = 0\quad \forall \boldsymbol{w}\in \boldsymbol{\mathcal{H}}_0\\ 
\left<(1-\kappa)\varphi^n \boldsymbol{\sigma}(\boldsymbol{u}^n):\boldsymbol{E}_{\mathrm{lin}}(\boldsymbol{u}^n),\psi -\varphi^n\right>\qquad\qquad\qquad  &\\ 
- \frac{G_c}{\epsilon}\left<1-\varphi^n, \psi -\varphi^n \right> + \epsilon G_c\left<\nabla\varphi^n,\nabla(\psi -\varphi^n)\right>  &\ge 0\quad \forall \psi\in\mathcal{K}(\varphi^{n-1})
\end{aligned}
\end{equation}
\end{problem}

In Miehe et al.~\cite{Miehe_Welschinger_Hofacker_2010} a stress splitting into a crack driving and a non crack driving part has been proposed for fracture phase-field models. 
The linearized strain tensor is decomposed into its tensile and compressive parts, i.e., $\boldsymbol{E}_{\mathrm{lin}} := \boldsymbol{E}^+_{\mathrm{lin}}  + \boldsymbol{E}^-_{\mathrm{lin}}$ with
\begin{align*}
\boldsymbol{E}^+_{\mathrm{lin}} := \boldsymbol{Q}\boldsymbol{D}^+ \boldsymbol{Q}^T
\end{align*} 
where $\boldsymbol{Q}$ is the matrix of eigenvectors, of $\boldsymbol{E}_{\mathrm{lin}}$, and $\boldsymbol{D}$ the matrix with the corresponding eigenvalues on the diagonal. Further, $(\cdot)^+$ denotes the positive part, i.e., on the diagonal of $\boldsymbol{D}^+$ are either the positive eigenvalues or zeros. 
We use the stress splitting of~\cite{Miehe_Welschinger_Hofacker_2010} which is given by
\begin{align*}
\boldsymbol{\sigma}^+(\boldsymbol{u}) :=& 2\mu\; \boldsymbol{E}^+_{\mathrm{lin}}(\boldsymbol{u}) + \lambda\;\mathrm{max}\{0,\operatorname{tr}(\boldsymbol{E}_{\mathrm{lin}}(\boldsymbol{u}))\}\textbf{id}, \\[3pt]
\boldsymbol{\sigma}^-(\boldsymbol{u}) := &2\mu\; \boldsymbol{E}^-_{\mathrm{lin}}(\boldsymbol{u}) + \lambda\;\mathrm{min}\{0,\operatorname{tr}(\boldsymbol{E}_{\mathrm{lin}}(\boldsymbol{u}))\}\textbf{id}
\end{align*}
where $\boldsymbol{\sigma}^+$ is the crack driving part of the stress. 
With these definitions and notations the time discrete weak formulation of the quasi-static fracture phase-field model according to~\cite{Miehe_Welschinger_Hofacker_2010}
reads as follows
\begin{problem}[Weak formulation in each time step with Miehe stress splitting]\label{WeakFormulationWithSplit}
Find $(\boldsymbol{u}^n,
\varphi^n)\in\boldsymbol{\mathcal{H}}_D^n\times\mathcal{K}(\varphi^{n-1})$
such that
\begin{equation}\label{eq:VI_orig_split}
\begin{aligned}
\left<g(\varphi^{n-1})\boldsymbol{\sigma}^+(\boldsymbol{u}^n) + \boldsymbol{\sigma}^-(\boldsymbol{u}^n), \boldsymbol{E}_{\mathrm{lin}}(\boldsymbol{w})\right>& = 0\quad \forall \boldsymbol{w}\in \boldsymbol{\mathcal{H}}_0\\ 
\left<(1-\kappa)\varphi^n \boldsymbol{\sigma}^+(\boldsymbol{u}^n):\boldsymbol{E}_{\mathrm{lin}}(\boldsymbol{u}^n),\psi -\varphi^n\right> \qquad\qquad\qquad &\\ 
- \frac{G_c}{\epsilon}\left<1-\varphi^n, \psi -\varphi^n \right>
+ \epsilon G_c\left<\nabla\varphi^n,\nabla(\psi -\varphi^n)\right>  &\ge 0\quad \forall \psi\in\mathcal{K}(\varphi^{n-1})
\end{aligned}
\end{equation}
\end{problem}

\subsection{Discrete formulation}
In the discrete setting, at each time step $n = 1,\ldots, N$, we decompose
the polygonal domain $\Omega$ by a (family of) meshes $\mathfrak{M}^n$ 
consisting of shape regular parallelograms or triangles $\mathfrak{e}$, such that all meshes share a
common coarse mesh. To allow for local
refinement, in particular of rectangular elements, we allow for one hanging node per edge at which degrees of freedom
will be eliminated to assert conformity of the discrete
spaces. Further, we assume that the boundary of the domain is resolved by the chosen meshes.

To each mesh, we associate the mesh size function $h^n$, i.e.,
$h^n_{\mathfrak{e}} = h^n\lvert_{\mathfrak{e}}= \operatorname{diam}{\mathfrak{e}}$ for any
element $\mathfrak{e} \in \mathfrak{M}^n$.
The set of nodes $p$ is given by $\mathfrak{N}$ and we distinguish between the set
$\mathfrak{N}^{\Gamma}$ of nodes at the boundary
and the set of interior nodes $\mathfrak{N}^I$.

Further, for a point $p \in \mathfrak{N}$, we define a patch $\omega_p$ as the interior of the union of all elements sharing the node $p$. We call the union of all sides in the interior of $\omega_p$, not including the boundary of $\omega_p$, skeleton and denote it by $\gamma_p^I$. For boundary nodes, we denote the intersections between $\Gamma$ and $\partial\omega_p$ by  $\gamma_p^{\Gamma}:=\Gamma \cap\partial\omega_p$.
Further, we will make use of $\omega_{\mathfrak{s}}$ which is the union of all elements sharing a side $\mathfrak{s}$.
Later on, we need the definition of the jump term $[\nabla\psi_h]:= \nabla|_{\mathfrak{e}}\psi_h\cdot\boldsymbol{n}_{\mathfrak{e}}- \nabla|_{\tilde{\mathfrak{e}}}\psi_h\cdot\boldsymbol{n}_{\mathfrak{e}}$ where $\mathfrak{e}, \tilde{\mathfrak{e}}$ are neighboring elements and $\boldsymbol{n}_{\mathfrak{e}}$ is the unit outward normal on the common side of the two elements. 
For the discretization, we consider linear finite elements on triangles and bilinear finite elements on parallelograms. 
We abbreviate
\begin{displaymath}
\mathbb{S}_1(\mathfrak{e}):=\left\{\begin{array}{cccc} \mathbb{P}_1(\mathfrak{e}), &\mbox{if}\; &\mathfrak{e} \;\mbox{is a}&\ \mbox{triangle,}\\  \mathbb{Q}_1(\mathfrak{e}), &\mbox{if}\;& \mathfrak{e} \;\mbox{is a}&\ \mbox{parallelogram}. \end{array}\right. 
\end{displaymath}
We define the space of continuous (bi-)linear finite elements by 
\begin{equation*}
\mathcal{H}_{\mathfrak{m}}:= \{\zeta_{\mathfrak{m}} \in \mathcal{C}^0(\bar{\Omega})\mid\forall\mathfrak{e}\in\mathfrak{M}, \;\zeta_{\mathfrak{m}}|_{\mathfrak{e}}\in \mathbb{S}_1(\mathfrak{e})\}.
\end{equation*}
The nodal basis functions of the finite element spaces are denoted by $\phi_p$. 
Hence, a finite element function has the representation
\begin{equation*}
\zeta_{\mathfrak{m}}=\sum_{p\in \mathfrak{N}}\zeta_{\mathfrak{m}}(p)\phi_p.
\end{equation*}

We assume the Dirichlet data $\boldsymbol{u}_D(t^n)\in\boldsymbol{\mathcal{C}}^0$ to be continuous and piecewise linear on the coarsest meshes. 
Thus, we seek the discrete displacements in the subset
\begin{equation*}
\boldsymbol{\mathcal{H}}^n_{\mathfrak{m},D}:= \{\boldsymbol{v}_{\mathfrak{m}} \in \mathcal{C}^0(\bar{\Omega})\mid\forall\mathfrak{e}\in\mathfrak{M}^n, \;\boldsymbol{v}_{\mathfrak{m}}|_{\mathfrak{e}}\in \mathbb{S}_1(\mathfrak{e}) \text{ and }\boldsymbol{v}_{\mathfrak{m}}=\boldsymbol{u}_D(t^n)\;\mbox{on }\Gamma^D\}.
\end{equation*}
The corresponding discrete test space is given by
\begin{equation*}
\boldsymbol{\mathcal{H}}^n_{\mathfrak{m},0}:= \{\boldsymbol{v}_{\mathfrak{m}} \in \mathcal{C}^0(\bar{\Omega})\mid\forall\mathfrak{e}\in\mathfrak{M}^n, \;\boldsymbol{v}_{\mathfrak{m}}|_{\mathfrak{e}}\in \mathbb{S}_1(\mathfrak{e}) \text{ and }\boldsymbol{v}_{\mathfrak{m}}=\boldsymbol{0}\;\mbox{on }\Gamma^D\}.
\end{equation*}
For the discrete phase-field variable the discrete feasible set is given by 
\begin{equation*}
\mathcal{K}^n_{\mathfrak{m}}:=\{\psi_{\mathfrak{m}}\in \mathcal{H}_{\mathfrak{m}}\mid \psi_{\mathfrak{m}}(p)\leq I_{\mathfrak{m}}^n(\varphi_{\mathfrak{m}}^{n-1})(p), \; \forall p\in\mathfrak{N}\}
\end{equation*}
where $I_{\mathfrak{m}}^n$ is the nodal interpolation operator on the mesh $\mathfrak{M}^n$.

Thus, the discrete formulation of Problem~\ref{WeakFormulation} is given by
\begin{problem}[Discrete formulation in each time step]\label{DiscreteFormulation}
Find $(\boldsymbol{u}^n_{\mathfrak{m}},
\varphi^n_{\mathfrak{m}})\in\boldsymbol{\mathcal{H}}^n_{\mathfrak{m},D}\times\mathcal{K}^n_{\mathfrak{m}}$
such that
\begin{equation}\label{eq:VI_discrete_orig}
\begin{aligned}
\left<g(\varphi^{n-1}_{\mathfrak{m}})\boldsymbol{\sigma}(\boldsymbol{u}^n_{\mathfrak{m}}), \boldsymbol{E}_{\mathrm{lin}}(\boldsymbol{w}_{\mathfrak{m}})\right>& = 0\quad \forall \boldsymbol{w}_{\mathfrak{m}}\in \boldsymbol{\mathcal{H}}^n_{\mathfrak{m},0}\\ 
\left<(1-\kappa)\varphi^n_{\mathfrak{m}}
\boldsymbol{\sigma}(\boldsymbol{u}^n_{\mathfrak{m}}):\boldsymbol{E}_{\mathrm{lin}}(\boldsymbol{u}^n_{\mathfrak{m}}),\psi_{\mathfrak{m}}
  -\varphi^n_{\mathfrak{m}}\right>  \qquad\qquad\qquad&\\
- \frac{G_c}{\epsilon}\left<1-\varphi^n_{\mathfrak{m}}, \psi_{\mathfrak{m}} -\varphi^n_{\mathfrak{m}} \right>
 + \epsilon G_c\left<\nabla\varphi^n_{\mathfrak{m}},\nabla(\psi_{\mathfrak{m}} -\varphi^n_{\mathfrak{m}})\right>  &\ge 0\quad \forall \psi_{\mathfrak{m}}\in\mathcal{K}^n_{\mathfrak{m}}
\end{aligned}
\end{equation}
\end{problem}
Using the splitting proposed in~\cite{Miehe_Welschinger_Hofacker_2010}, we get 
\begin{problem}[Discrete formulation in each time step with Miehe stress splitting]\label{DiscreteFormulationWithSplit}
Find $(\boldsymbol{u}^n_{\mathfrak{m}},
\varphi^n_{\mathfrak{m}})\in\boldsymbol{\mathcal{H}}^n_{\mathfrak{m},D}\times\mathcal{K}^n_{\mathfrak{m}}$
such that
\begin{equation} \label{eq:VI_discrete_orig_split}
\begin{aligned}
\left<g(\varphi^{n-1}_{\mathfrak{m}})\boldsymbol{\sigma}^+(\boldsymbol{u}^n_{\mathfrak{m}}) + \boldsymbol{\sigma}^-(\boldsymbol{u}^n_{\mathfrak{m}}), \boldsymbol{E}_{\mathrm{lin}}(\boldsymbol{w}_{\mathfrak{m}})\right>& = 0\quad \forall \boldsymbol{w}_{\mathfrak{m}}\in \boldsymbol{\mathcal{H}}^n_{\mathfrak{m},0}\\
\left<(1-\kappa)\varphi^n_{\mathfrak{m}}
  \boldsymbol{\sigma}^+(\boldsymbol{u}^n_{\mathfrak{m}}):\boldsymbol{E}_{\mathrm{lin}}(\boldsymbol{u}^n_{\mathfrak{m}}),\psi_{\mathfrak{m}}
  -\varphi^n_{\mathfrak{m}}\right> \qquad\qquad\qquad&\\
 - \frac{G_c}{\epsilon}\left<1-\varphi^n_{\mathfrak{m}}, \psi_{\mathfrak{m}} -\varphi^n_{\mathfrak{m}} \right>
 + \epsilon G_c\left<\nabla\varphi^n_{\mathfrak{m}},\nabla(\psi_{\mathfrak{m}} -\varphi^n_{\mathfrak{m}})\right>  &\ge 0\quad \forall \psi_{\mathfrak{m}}\in\mathcal{K}^n_{\mathfrak{m}}
\end{aligned}
\end{equation}
\end{problem}

\section{Residual-type a posteriori estimator for the variational inequality}\label{Sec:EstimatorVI}
In this section, we propose a residual-type a posteriori estimator for the adaptive solution of the quasi-static phase-field model (Problems~\ref{WeakFormulation} and~\ref{DiscreteFormulation}). We comment on how the estimator changes for the problem formulations with the stress splitting (Problem~\ref{WeakFormulationWithSplit} and Problem~\ref{DiscreteFormulationWithSplit}). As the structure remains the same for all
time steps, we consider one time step $n$, only. We drop the
now superfluous superscript $n$ for the solution and for other quantities as e.g., $h_{\mathfrak{e}}:= h^{n}_{\mathfrak{e}}$. 

The proofs of reliability and efficiency are given in Sections~\ref{Sec:Reliability} and~\ref{Sec:Efficiency}.

\subsection{Auxiliary problem formulation}
The residual-type a posteriori estimator proposed in this section is derived for the solution of the following variational inequality (Problem~\ref{AuxiliaryProblemCont}).
\begin{problem}\label{AuxiliaryProblemCont}
Let $\boldsymbol{u}^n_{\mathfrak{m}}$ and $\varphi^{n-1}_{\mathfrak{m}}$ be given, then find $\hat{\varphi}\in \mathcal{K}( I_{\mathfrak{m}}^n(\varphi_{\mathfrak{m}}^{n-1}))$ such that
\begin{equation}\label{eq:VI_DiscreteBilinearForm}
a_{\mathfrak{m},\epsilon}(\hat{\varphi} , \psi -\hat{\varphi} ) \ge\left< \frac{G_c}{\epsilon}, \psi - \hat{\varphi}\right>\quad \forall \psi \in \mathcal{K}( I_{\mathfrak{m}}^n(\varphi_{\mathfrak{m}}^{n-1}))
\end{equation}
\end{problem}
where the bilinear form is given by 
\begin{equation}\label{eq:EpsBilinearFormDiscrete}
a_{\mathfrak{m},\epsilon}(\zeta,\psi) := \left<\left(\frac{G_c}{\epsilon} + \left(1-\kappa\right)\left(\boldsymbol{\sigma}\left(\boldsymbol{u}^n_{\mathfrak{m}}\right):\boldsymbol{E}_{\mathrm{lin}}\left(\boldsymbol{u}^n_{\mathfrak{m}}\right)\right)\right)\zeta,\psi\right>  + G_c\epsilon \left<\nabla \zeta,\nabla \psi \right>,
\end{equation}
and $\mathcal{K}(I^n_{\mathfrak{m}}(\varphi^{n-1}_{\mathfrak{m}})):=\{\psi\in \mathcal{H}\mid \psi\leq I^n_{\mathfrak{m}}(\varphi_{\mathfrak{m}}^{n-1})\}$.

It exists a distribution $\hat{\Lambda}\in H^{-1}$, called constraining force density, which turns the variational inequality~\eqref{eq:VI_DiscreteBilinearForm} into an equation
\begin{equation*}
\left<\hat{\Lambda} ,\psi\right>_{-1,1}:=\left<\frac{G_c}{\epsilon}, \psi \right> -a_{\mathfrak{m},\epsilon}(\hat{\varphi}, \psi )\quad \forall \psi\in H^1.
\end{equation*}

As discrete approximation of Problem~\ref{AuxiliaryProblemCont}, we consider the following Problem
\begin{problem}\label{AuxiliaryProblemDisc}
Let $\boldsymbol{u}^n_{\mathfrak{m}}$ and $\varphi^{n-1}_{\mathfrak{m}}$ be given, then find $\hat{\varphi}_{\mathfrak{m}}\in \mathcal{K}_{\mathfrak{m}}^n$ such that
\begin{equation}\label{eq:VI_DiscreteBilinearForm_Discrete}
a_{\mathfrak{m},\epsilon}(\hat{\varphi}_{\mathfrak{m}} , \psi_{\mathfrak{m}} -\hat{\varphi}_{\mathfrak{m}}) \ge\left< \frac{G_c}{\epsilon}, \psi_{\mathfrak{m}} - \hat{\varphi}_{\mathfrak{m}}\right>\quad \forall \psi_{\mathfrak{m}} \in \mathcal{K}_{\mathfrak{m}}^n
\end{equation}
\end{problem}
We define the corresponding discrete constraining force density
$\hat{\Lambda}_{\mathfrak{m}}\in \mathcal{H}_{\mathfrak{m}}^*$ as
\begin{equation}\label{eq:DiscreteConstrainingForce}
\left< \hat{\Lambda}_{\mathfrak{m}},\psi_{\mathfrak{m}} \right>_{-1,1} := \left<\frac{G_c}{\epsilon},\psi_{\mathfrak{m}}\right>-a_{{\mathfrak{m}},\epsilon}(\hat{\varphi}_{\mathfrak{m}}, \psi_{\mathfrak{m}})\quad\forall \psi_{\mathfrak{m}}\in\mathcal{H}_{{\mathfrak{m}}}.
\end{equation}

We note that the discrete solution $\hat{\varphi}_{\mathfrak{m}}$ of~\eqref{eq:VI_DiscreteBilinearForm_Discrete} equals the discrete solution $\varphi^n_{\mathfrak{m}}$ of Problem~\ref{DiscreteFormulation} in time step $n$. Further, as the bilinear form $a_{\mathfrak{m},\epsilon}(\cdot,\cdot)$ depends on the approximation $\boldsymbol{u}^n_{\mathfrak{m}}$ of $\boldsymbol{u}^n$ and the constraints depend on the approximation $I^n_{\mathfrak{m}}(\varphi_{\mathfrak{m}}^{n-1})$  of $\varphi^{n-1}$, the solution $\hat{\varphi}$ of~\eqref{eq:VI_DiscreteBilinearForm} 
is an approximation to the solution $\varphi^n$ of~\eqref{eq:VI_orig}.

\subsection{Error measure and quasi-discrete constraining force}
The error will be measured in the solution of the variational inequality as well as in the constraining forces as has been proposed in~\cite{Veeser_2001} for the obstacle problem. We measure the error of the solution $\hat{\varphi}$ in the energy norm
\begin{equation}\label{eq:EnergyNorm}
\|\cdot\|_{\epsilon}:= \left\{G_c\epsilon\|\nabla (\cdot)\|^2 + \|\left(\frac{G_c}{\epsilon}+(1-\kappa)\boldsymbol{\sigma}(\boldsymbol{u}^n_{\mathfrak{m}}):\tensor{E}_{\mathrm{lin}}(\boldsymbol{u}^n_{\mathfrak{m}})\right)^{\frac{1}{2}}(\cdot)\|^2 \right\}^{\frac{1}{2}}
\end{equation}
which corresponds to the bilinear form $a_{\mathfrak{m},\epsilon}(\cdot,\cdot)$.
\begin{remark}
We note that in the case of stress splitting
$\boldsymbol{\sigma}:\boldsymbol{E}_{\mathrm{lin}}$
in~\eqref{eq:EpsBilinearFormDiscrete} and in~\eqref{eq:EnergyNorm} is
replaced by $\boldsymbol{\sigma}^+:\boldsymbol{E}_{\mathrm{lin}}$. The
resulting bilinear form is positive definite as
\begin{equation}\label{eq:posdef}
\begin{aligned}
\boldsymbol{\sigma}^+:\boldsymbol{E}_{\mathrm{lin}} &= 2\mu \boldsymbol{E}^+_{\mathrm{lin}}:\boldsymbol{E}_{\mathrm{lin}} + \lambda\mathrm{max}\{0,\mathrm{tr}(\boldsymbol{E}_{\mathrm{lin}})\}\boldsymbol{id}:\boldsymbol{E}_{\mathrm{lin}}\\ 
&= 2\mu (\boldsymbol{Q}\boldsymbol{D}^+\boldsymbol{Q}^T):(\boldsymbol{Q}\boldsymbol{D}\boldsymbol{Q}^T) +  \lambda\mathrm{max}\{0,\mathrm{tr}(\boldsymbol{E}_{\mathrm{lin}})\}\boldsymbol{id}:\boldsymbol{E}_{\mathrm{lin}}\\ 
& = 2\mu\; \mathrm{tr}(\boldsymbol{D}^+)^2 + \lambda\mathrm{max}\{0,\mathrm{tr}(\boldsymbol{E}_{\mathrm{lin}})\}^2>0.
\end{aligned}
\end{equation}
Thus, the energy norm is well defined.
\end{remark}
The error in the constraining forces is measured in the corresponding dual norm $\|\cdot\|_{\ast,\epsilon}:= \frac{\mathrm{sup}_{\psi\in H^1}\left<\cdot, \psi \right>_{-1,1}}{\|\psi\|_{\epsilon}}$.

In order to compare the continuous and discrete constraining forces, we cannot simply take $\hat{\Lambda}_{\mathfrak{m}}$ given by definition~\eqref{eq:DiscreteConstrainingForce} as it is a functional on the space of discrete functions, only, and not a functional on $H^1$. 
There is no unique definition how $\hat{\Lambda}_{\mathfrak{m}}$ acts on a function in $H^1$ which is not in $\mathcal{H}_{\mathfrak{m}}$. Thus, we have to define a suitable approximation of $\hat{\Lambda}$ as a functional on $H^1$ on the basis of the properties of the discrete solution $\hat{\varphi}_{\mathfrak{m}}$ and $\hat{\Lambda}_{\mathfrak{m}}$. We call it quasi-discrete constraining force and denote it by $\widetilde{\hat{\Lambda}}_{\mathfrak{m}}$.
In~\cite{Veeser_2001} such a functional on $H^1$ has been proposed by means of lumping 
$\sum_{p\in\mathfrak{N}^C}s_p\phi_p$,
where $s_p = \frac{\left<\hat{\Lambda}_{\mathfrak{m}},\phi_p\right>_{-1,1}}{\int_{\omega_p}\phi_p}\ge 0$
are the node values of the lumped discrete constraining force. The sign condition follows from the discrete variational inequality. 
As the lumped discrete constraining force  is a discrete function a complementarity condition, i.e., $\hat{\Lambda}_{\mathfrak{m}}\cdot(\hat{\varphi}_{\mathfrak{m}}-I_{\mathfrak{m}}^n(\varphi^{n-1}_{\mathfrak{m}}))=0$, cannot be fulfilled in the so-called semi-contact zone which consists of elements having nodes which are in contact and nodes which are not in contact. It is only valid in so-called full-contact areas where $\hat{\varphi}_{\mathfrak{m}}=I_{\mathfrak{m}}^n(\varphi^{n-1}_{\mathfrak{m}})$  and in non-actual-contact areas where $\hat{\varphi}_{\mathfrak{m}}<I_{\mathfrak{m}}^n(\varphi^{n-1}_{\mathfrak{m}})$. 

Especially for the efficiency and the localization of a posteriori error estimation it is very advantageous, if the quasi-discrete constraining force density can be defined differently for the different areas of full- and semi-contact to reflect local properties. Such an approach has been used first for the derivation of an a posteriori error estimator in~\cite{Fierro_Veeser_2003} and applied to obstacle and contact problems in, e.g.,~\cite{Nochetto_Siebert_Veeser_2005, Moon_Nochetto_Petersdorff_Zhang_2007, Krause_Veeser_Walloth_2015, Gudi_Porwal_2014, Gudi_Porwal_2016,Walloth_2019,Walloth_2020}.
Following this approach, we distinguish between full-contact nodes $p\in\mathfrak{N}^{fC}$ and semi-contact nodes $p\in\mathfrak{N}^{sC}$. Full-contact nodes are those nodes for which the solution is fixed to the obstacle, i.e., $\hat{\varphi}^n_{\mathfrak{m}}=I_{\mathfrak{m}}^n(\varphi^{n-1}_{\mathfrak{m}})$ on $\omega_p$, and the sign condition
\[
0\leq\left<\mathcal{R}^{lin}_{\mathfrak{m}},\psi\right>_{-1,1,\omega_p}:=\left<\frac{G_c}{\epsilon},\psi\right>-a_{{\mathfrak{m}},\epsilon}(\hat{\varphi}_{\mathfrak{m}}, \psi)\qquad \forall \psi\ge 0\in \mathcal{H}_0(\omega_p)
\]
is fulfilled. The latter condition means that the solution is locally not improvable, see the explanation in~\cite{Moon_Nochetto_Petersdorff_Zhang_2007}. Semi-contact nodes are those nodes for which $\hat{\varphi}^n_{\mathfrak{m}}(p)=I_{\mathfrak{m}}^n(\varphi^{n-1}_{\mathfrak{m}})(p)$ holds but not the above conditions of full-contact. 
Based on this classification, we define the quasi-discrete constraining force 
\begin{align}\label{eq:QuasiDiscreteConstrainingForce}
&\left< \widetilde{\hat{\Lambda}}_{\mathfrak{m}},\psi \right>_{-1,1} := \sum_{p\in\mathfrak{N}^{sC}}\left< \widetilde{\hat{\Lambda}}_{\mathfrak{m}}^p,\psi\phi_p \right>_{-1,1} + \sum_{p\in\mathfrak{N}^{fC}}\left< \widetilde{\hat{\Lambda}}_{\mathfrak{m}}^p,\psi\phi_p \right>_{-1,1}.
\end{align}
For the definition of the local contributions, we abbreviate the element residual 
\begin{equation}\label{eq:ElementResidual}
r(\hat{\varphi}_{\mathfrak{m}}) :=\frac{G_c}{\epsilon} + G_c\epsilon\Delta \hat{\varphi}_{\mathfrak{m}} -  \frac{G_c}{\epsilon}\hat{\varphi}_{\mathfrak{m}}- (1-\kappa)(\tensor{\sigma}(\tensor{u}^n_{\mathfrak{m}}):\tensor{E}_{\mathrm{lin}}(\tensor{u}^n_{\mathfrak{m}}))\hat{\varphi}_{\mathfrak{m}}.
\end{equation}
For semi-contact nodes we consider the following local contribution in~\eqref{eq:QuasiDiscreteConstrainingForce}
\begin{align*}
\bigl< \widetilde{\hat{\Lambda}}_{\mathfrak{m}}^p &,\psi\phi_p \bigr>_{-1,1}
:= \bigl<\hat{\Lambda}_{\mathfrak{m}} ,\phi_p \bigr>_{-1,1}c_p(\psi)\\
& =\int_{\gamma_p^I}G_c\epsilon[\nabla \hat{\varphi}_{\mathfrak{m}}]c_p(\psi)\phi_p  - \int_{\gamma_p^\Gamma}(G_c\epsilon\nabla \hat{\varphi}_{\mathfrak{m}}\cdot\boldsymbol{n}_{\mathfrak{e}})c_p(\psi)\phi_p + \int_{\omega_p} r(\hat{\varphi}_{\mathfrak{m}})c_p(\psi)\phi_p 
\end{align*} 
with $c_p(\psi)= \frac{\int_{\widetilde{\omega}_p}\psi\phi_p}{\int_{\widetilde{\omega}_p}\phi_p}$, where $\widetilde{\omega}_p$ is the patch around $p$ with respect to a three times uniformly red-refined mesh.

For full-contact nodes we define the following local contribution in~\eqref{eq:QuasiDiscreteConstrainingForce}
\begin{align*}
\bigl< \widetilde{\hat{\Lambda}}_{\mathfrak{m}}^p&,\psi\phi_p \bigr>_{-1,1}
 :=  \bigl<\mathcal{R}^{lin}_{\mathfrak{m}}, \psi\phi_p\bigr>_{-1,1}\\
& :=\int_{\gamma_p^I}G_c\epsilon[\nabla
\hat{\varphi}_{\mathfrak{m}}]\psi\phi_p  -
\int_{\gamma_p^\Gamma}(G_c\epsilon\nabla
\hat{\varphi}_{\mathfrak{m}}\cdot\boldsymbol{n}_{\mathfrak{e}})\psi\phi_p
+ \int_{\omega_p} r(\hat{\varphi}_{\mathfrak{m}})\psi\phi_p. 
\end{align*}

With these definitions, we define the error measure 
\begin{equation}\label{eq:ErrorMeasure}
\|\hat{\varphi}-\hat{\varphi}_{\mathfrak{m}}\|_{\epsilon} + \|\hat{\Lambda}-\widetilde{\hat{\Lambda}}_{\mathfrak{m}}\|_{\ast,\epsilon}.
\end{equation}

\subsection{Error estimator}
In order to state the error estimator for the error measure~\eqref{eq:ErrorMeasure}, we define for each node $p$
\begin{equation}\label{eq:Def_alpha_p}
\alpha_p:= \mathrm{min}_{x\in\omega_p}\{\frac{G_c}{\epsilon}+(1-\kappa) (\boldsymbol{\sigma}(\boldsymbol{u}^n_{\mathfrak{m}}):\boldsymbol{E}_{\mathrm{lin}}(\boldsymbol{u}^n_{\mathfrak{m}}))\}
\end{equation}
and $h_p:=\mathrm{diam}(\omega_p)$. We note that for linear finite
elements on triangles the quantity $\left(\frac{G_c}{\epsilon}+(1-\kappa) (\boldsymbol{\sigma}^+(\boldsymbol{u}^n_{\mathfrak{m}}):\boldsymbol{E}_{\mathrm{lin}}(\boldsymbol{u}^n_{\mathfrak{m}}))\right)$ is constant on each element.
The error estimator 
\begin{equation}\label{eq:Def_Estimator}
\eta^{\varphi} := \sum_{k=1}^4\eta^{\varphi}_{k}
\end{equation}
for which we prove reliability and efficiency in Sections~\ref{Sec:Reliability} and~\ref{Sec:Efficiency} consists of the following local contributions
\begin{align*}
(\eta^{\varphi}_{1})^2:=&\sum_{p\in\mathfrak{N}\backslash\mathfrak{N}^{fC}}(\eta^{\varphi}_{1,p})^2, & \eta^{\varphi}_{1,p}:=&\mathrm{min}\{\frac{h_p}{\sqrt{G_c\epsilon}},\alpha_p^{-\frac{1}{2}}\}\|r(\hat{\varphi}_{\mathfrak{m}})\|_{\omega_p}\\ 
(\eta^{\varphi}_{2})^2:=&\sum_{p\in\mathfrak{N}\backslash\mathfrak{N}^{fC}}(\eta^{\varphi}_{2,p})^2, &\eta^{\varphi}_{2,p}:=& \mathrm{min}\{\frac{h_p}{\sqrt{G_c \epsilon}},\alpha_p^{-\frac{1}{2}}\}^{\frac{1}{2}}(G_c\epsilon)^{-\frac{1}{4}}\|G_c\epsilon[\nabla \hat{\varphi}_{\mathfrak{m}}]\|_{\gamma_p^I}\\ 
(\eta^{\varphi}_{3})^2:=&\sum_{p\in\mathfrak{N}\backslash\mathfrak{N}^{fC}}(\eta^{\varphi}_{3,p})^2, &\eta^{\varphi}_{3,p}:=&\mathrm{min}\{\frac{h_p}{\sqrt{G_c\epsilon}},\alpha_p^{-\frac{1}{2}}\}^{\frac{1}{2}}(G_c\epsilon)^{-\frac{1}{4}}\|G_c\epsilon\nabla \hat{\varphi}_{\mathfrak{m}}\cdot\boldsymbol{n}_{\mathfrak{e}}\|_{\gamma_p^\Gamma}\\  
(\eta^{\varphi}_{4})^2:=&\sum_{p\in\mathfrak{N}^{sC}}(\eta^{\varphi}_{4,p})^2, &\eta^{\varphi}_{4,p}:=&\left(s_p \int_{\widetilde{\omega}_p}(I_{\mathfrak{m}}^n(\varphi^{n-1}_{\mathfrak{m}})-\hat{\varphi}_{\mathfrak{m}})\phi_p\right)^{\frac{1}{2}}
\end{align*}
with $s_p := \frac{\left<\hat{\Lambda}_{\mathfrak{m}},\phi_p\right>_{-1,1}}{\int_{\omega_p}\phi_p}$.
We emphasize that the estimator contributions related to the constraints are localized to the area of semi-contact. In the absence of any contact, we have $\eta^{\varphi}_{k,p}=0$ for $k=4$ such that $\eta^{\varphi}$ reduces to a robust residual estimator, see, e.g.,~\cite{Verfuerth_1998b} 
for the prototype of a singularly perturbed reaction-diffusion equation.
\begin{remark}
If stress splitting of $\boldsymbol{\sigma}$ is used, the definitions of $r(\hat{\varphi}_{\mathfrak{m}})$ in~\eqref{eq:ElementResidual} and $\alpha_p$ in~\eqref{eq:Def_alpha_p} need to consider $\boldsymbol{\sigma}^+$, which thus enters into the error estimator.  
\end{remark}

In Section~\ref{Sec:Reliability}, we prove that $\eta^{\varphi}$ constitutes a robust upper bound where robust means that the constant in the bound does not depend on $\epsilon$ such that the validity of the estimator holds for arbitrary choices of $\epsilon$.
\begin{theorem}{\bf Reliability of the error estimator}\label{Theorem:UpperBound}\\
The error estimator $\eta^{\varphi}$ provides a robust upper bound of the error measure~\eqref{eq:ErrorMeasure}:
\begin{equation*}
\|\hat{\varphi}-\hat{\varphi}_{\mathfrak{m}}\|_{\epsilon} + \|\hat{\Lambda}-\widetilde{\hat{\Lambda}}_{\mathfrak{m}}\|_{\ast,\epsilon}\lesssim \eta^{\varphi}.
\end{equation*}
\end{theorem}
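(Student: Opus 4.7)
The plan is to follow the Galerkin-functional strategy of Veeser for obstacle problems, enhanced by Verfürth-style robust quasi-interpolation to handle the singular perturbation in $\epsilon$. The starting point is the Galerkin identity valid for any $\psi\in H^1$:
\begin{equation*}
a_{\mathfrak{m},\epsilon}(\hat{\varphi}-\hat{\varphi}_{\mathfrak{m}},\psi) + \langle\widetilde{\hat{\Lambda}}_{\mathfrak{m}}-\hat{\Lambda},\psi\rangle_{-1,1} = \langle G_c/\epsilon,\psi\rangle - a_{\mathfrak{m},\epsilon}(\hat{\varphi}_{\mathfrak{m}},\psi) - \langle\widetilde{\hat{\Lambda}}_{\mathfrak{m}},\psi\rangle_{-1,1},
\end{equation*}
whose right-hand side is the computable residual functional that will be localized, patch by patch, into the four contributions $\eta^{\varphi}_1,\dots,\eta^{\varphi}_4$.

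The next step is to split $\psi$ using a Clément/Scott--Zhang-type quasi-interpolation $\psi_{\mathfrak{m}}\in\mathcal{H}_{\mathfrak{m}}$ subject to a sign-preservation property, and then to rewrite the residual $\psi = \psi_{\mathfrak{m}} + (\psi-\psi_{\mathfrak{m}})$. On the discrete part I would use the definition~\eqref{eq:DiscreteConstrainingForce} together with the patch decomposition~\eqref{eq:QuasiDiscreteConstrainingForce}: for $p\in\mathfrak{N}^{fC}$ the local contribution $\langle\widetilde{\hat{\Lambda}}_{\mathfrak{m}}^p,\psi\phi_p\rangle_{-1,1}$ is built from $\mathcal{R}^{lin}_{\mathfrak{m}}$ acting on the continuous test function and the full-contact sign condition ensures the remainder vanishes, whereas for $p\in\mathfrak{N}^{sC}$ the averaging weight $c_p(\psi)$ exactly cancels the action of $\hat{\Lambda}_{\mathfrak{m}}$ on $\phi_p$ at the cost of a defect $s_p\int_{\widetilde{\omega}_p}(\psi\phi_p - c_p(\psi)\phi_p)$, which is where the complementarity-type quantity in $\eta^{\varphi}_4$ enters, via the gap $I^n_{\mathfrak{m}}(\varphi^{n-1}_{\mathfrak{m}})-\hat{\varphi}_{\mathfrak{m}}$ obtained after a weighted Cauchy--Schwarz bound on $\widetilde{\omega}_p$. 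On the high-frequency part $\psi-\psi_{\mathfrak{m}}$, element-wise integration by parts produces the element residual $r(\hat{\varphi}_{\mathfrak{m}})$ of~\eqref{eq:ElementResidual}, the interior jumps $[\nabla\hat{\varphi}_{\mathfrak{m}}]$, and the Neumann residuals $\nabla\hat{\varphi}_{\mathfrak{m}}\cdot\boldsymbol{n}_{\mathfrak{e}}$ on $\gamma_p^{\Gamma}$, matching $\eta^{\varphi}_{1}$, $\eta^{\varphi}_{2}$, $\eta^{\varphi}_{3}$.

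The technical heart -- and the main obstacle -- is robustness in $\epsilon$. Here I would use the Verfürth weights $\min\{h_p/\sqrt{G_c\epsilon},\alpha_p^{-1/2}\}$, which interpolate between the standard $L^2$-estimate $\|\psi-\psi_{\mathfrak{m}}\|_{\omega_p}\lesssim h_p\|\nabla\psi\|_{\omega_p}$ (controlled by $(G_c\epsilon)^{-1/2}\|\psi\|_\epsilon$) and an $\alpha_p^{-1/2}\|\psi\|_\epsilon$ bound coming from the reaction term; the square-root exponent on edges follows from the trace scaling. These estimates, tailored to the bilinear form $a_{\mathfrak{m},\epsilon}$, ensure that duality pairings between residuals and $(\psi-\psi_{\mathfrak{m}})\phi_p$ are bounded by the estimator times $\|\psi\|_\epsilon$ with an $\epsilon$-independent constant. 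Care is required at semi-contact patches so that the averaging $c_p$ does not destroy these weighted interpolation estimates, which is precisely why $\widetilde{\omega}_p$ is defined via a three-times uniformly refined mesh.

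Finally, the two terms of~\eqref{eq:ErrorMeasure} are extracted as follows. Testing the Galerkin identity with $\psi=\hat{\varphi}-\hat{\varphi}_{\mathfrak{m}}$ and exploiting the sign of $\langle\widetilde{\hat{\Lambda}}_{\mathfrak{m}}-\hat{\Lambda},\hat{\varphi}-\hat{\varphi}_{\mathfrak{m}}\rangle_{-1,1}$ -- which follows from the complementarity $\hat{\Lambda}\cdot(\hat{\varphi}-I^n_{\mathfrak{m}}(\varphi^{n-1}_{\mathfrak{m}}))=0$ together with the patchwise sign properties of $\widetilde{\hat{\Lambda}}_{\mathfrak{m}}$, decomposing $\hat{\varphi}-\hat{\varphi}_{\mathfrak{m}}$ into its positive and negative parts if needed -- produces the energy-norm bound $\|\hat{\varphi}-\hat{\varphi}_{\mathfrak{m}}\|_\epsilon\lesssim\eta^{\varphi}$. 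For the dual norm, one takes the supremum over $\psi$ with $\|\psi\|_\epsilon=1$ of the Galerkin identity, absorbs $a_{\mathfrak{m},\epsilon}(\hat{\varphi}-\hat{\varphi}_{\mathfrak{m}},\psi)\le\|\hat{\varphi}-\hat{\varphi}_{\mathfrak{m}}\|_\epsilon\|\psi\|_\epsilon$ by Cauchy--Schwarz and the just-established energy bound, and uses the same residual localization for the remainder. Adding the two gives the claim.
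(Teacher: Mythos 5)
Your high-level plan is the same as the paper's: rewrite the residual as a Galerkin functional, bound it via Verf\"urth-weighted $L^2$-approximation estimates (the paper's Lemma~\ref{LemmaL2Approx}), and combine with a Veeser-type abstract inequality relating the error measure to the Galerkin functional and a complementarity residual. The localization with $\psi-c_p(\psi)$ and the split into full-contact and semi-contact nodes also match the paper's construction.

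However, there is a genuine conceptual error in where you place $\eta^{\varphi}_4$. You claim that at semi-contact patches the cancellation $\langle\hat\Lambda_{\mathfrak m},\phi_p\rangle c_p(\psi)$ versus $\langle\widetilde{\hat\Lambda}^p_{\mathfrak m},\psi\phi_p\rangle$ produces a ``defect'' $s_p\int_{\widetilde\omega_p}(\psi\phi_p-c_p(\psi)\phi_p)$ which is the source of $\eta^{\varphi}_4$. But this quantity is \emph{identically zero}: by the very definition of $c_p$, one has $\int_{\widetilde\omega_p}(\psi-c_p(\psi))\phi_p=0$, and $\langle\widetilde{\hat\Lambda}^p_{\mathfrak m},\psi\phi_p\rangle=\langle\hat\Lambda_{\mathfrak m},\phi_p\rangle c_p(\psi)$ holds exactly for $p\in\mathfrak N^{sC}$. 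Consequently the Galerkin functional representation~\eqref{eq:DefGalerkinFunctional} contains only $\eta^{\varphi}_1,\eta^{\varphi}_2,\eta^{\varphi}_3$ (Lemma~\ref{UpperBoundGalerkin}), and $\eta^{\varphi}_4$ does \emph{not} enter there.

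The fourth contribution arises instead from the complementarity residual $\langle\widetilde{\hat\Lambda}_{\mathfrak m}-\hat\Lambda,\hat\varphi-\hat\varphi_{\mathfrak m}\rangle_{-1,1}$ that appears additively on the right-hand side of the abstract estimate~\eqref{eq:UpperBoundAbstract}. Your last paragraph suggests this term can be disposed of purely by sign considerations, which would make $\eta^{\varphi}_4$ redundant. It cannot. The sign analysis (writing $\hat\varphi-\hat\varphi_{\mathfrak m}=(\hat\varphi-I^n_{\mathfrak m}\varphi^{n-1}_{\mathfrak m})+(I^n_{\mathfrak m}\varphi^{n-1}_{\mathfrak m}-\hat\varphi_{\mathfrak m})$, \emph{not} into positive and negative parts) shows that the term involving $\hat\Lambda$ and the contribution at full-contact nodes are $\le 0$ and may be dropped, but at semi-contact nodes the surviving piece
\begin{equation*}
\sum_{p\in\mathfrak N^{sC}} s_p\int_{\widetilde\omega_p}\bigl(I^n_{\mathfrak m}(\varphi^{n-1}_{\mathfrak m})-\hat\varphi_{\mathfrak m}\bigr)\phi_p
\end{equation*}
is non-negative and generally nonzero; this is precisely $(\eta^{\varphi}_4)^2$ and must remain on the right-hand side (the paper's Lemma~\ref{LemmaComplementarityResidual}). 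As written, your proposal both misidentifies the origin of $\eta^{\varphi}_4$ and does not explicitly carry the positive remainder through, so it does not actually yield the reliability bound.
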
 

In order to formulate the local lower bounds we denote by $\bar{r}(\hat{\varphi}_{\mathfrak{m}})$ a piecewise linear approximations of $r(\hat{\varphi}_{\mathfrak{m}})$ and we abbreviate $\mathrm{osc}_p(r):=\mathrm{min}\{\frac{h_p}{\sqrt{G_c\epsilon}},\alpha_{p}^{-\frac{1}{2}}\}\|\bar{r}(\hat{\varphi}_{\mathfrak{m}}) - r(\hat{\varphi}_{\mathfrak{m}}\|_{\omega_p}$.
In Section~\ref{Sec:Efficiency}, we derive the local lower bounds which are summarized in the following Theorems.
\begin{theorem}{\bf Local lower bounds by $\eta^{\varphi}_{1,p},\eta^{\varphi}_{2,p},\eta^{\varphi}_{3,p}$}\label{Theorem:LowerBound}\\
The error estimator contributions $\eta^{\varphi}_{k,p}$, $k=1,2,3$ constitute the following robust local lower bounds
\begin{equation*}
\eta^{\varphi}_{k,p}\lesssim\|\hat{\varphi}-\hat{\varphi}_{\mathfrak{m}}\|_{\epsilon,\omega_p} + \|\hat{\Lambda}-\widetilde{\hat{\Lambda}}_{\mathfrak{m}}\|_{\ast,\epsilon,\omega_p} + \mathrm{osc}_p(r).
\end{equation*}
\end{theorem}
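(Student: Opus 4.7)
My plan is to use Verfürth's bubble-function technique, adapted to the singularly perturbed reaction–diffusion regime and modified to accommodate the quasi-discrete constraining force $\widetilde{\hat{\Lambda}}_{\mathfrak{m}}$. The starting point is the defect identity obtained by subtracting the discrete problem from the continuous one,
\begin{equation*}
a_{\mathfrak{m},\epsilon}(\hat{\varphi}-\hat{\varphi}_{\mathfrak{m}},\psi) + \langle \hat{\Lambda}-\widetilde{\hat{\Lambda}}_{\mathfrak{m}},\psi\rangle_{-1,1} = \langle \tfrac{G_c}{\epsilon},\psi\rangle - a_{\mathfrak{m},\epsilon}(\hat{\varphi}_{\mathfrak{m}},\psi) - \langle \widetilde{\hat{\Lambda}}_{\mathfrak{m}},\psi\rangle_{-1,1}, \quad \forall\psi\in H^1.
\end{equation*}
Elementwise integration by parts on the right-hand side converts it to a sum of element residuals $r(\hat{\varphi}_{\mathfrak{m}})$, interior jumps $G_c\epsilon[\nabla\hat{\varphi}_{\mathfrak{m}}]$ on $\gamma_p^I$, Neumann contributions on $\gamma_p^\Gamma$, and the correction $-\langle\widetilde{\hat{\Lambda}}_{\mathfrak{m}},\psi\rangle_{-1,1}$. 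This localized identity is the workhorse of all three local lower bounds.

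For $\eta^{\varphi}_{1,p}$, I would test against $\psi_p := b_p\bar{r}(\hat{\varphi}_{\mathfrak{m}})$ with $b_p$ an element-bubble supported in $\omega_p$ and vanishing on $\partial\omega_p$, so that the jump and boundary terms drop out. The standard bubble-inverse inequalities give $\|\bar{r}\|_{\omega_p}^2\lesssim\int_{\omega_p}b_p\bar{r}^2$ together with $\|\psi_p\|_{\omega_p}\lesssim\|\bar{r}\|_{\omega_p}$ and $\|\nabla\psi_p\|_{\omega_p}\lesssim h_p^{-1}\|\bar{r}\|_{\omega_p}$. Combining these in the energy norm and balancing the gradient and reaction contributions yields
\begin{equation*}
\|\psi_p\|_{\epsilon,\omega_p}\lesssim\min\Bigl\{\tfrac{h_p}{\sqrt{G_c\epsilon}},\alpha_p^{-1/2}\Bigr\}^{-1}\|\bar{r}\|_{\omega_p}.
\end{equation*}
Since $p\notin\mathfrak{N}^{fC}$, the only $\widetilde{\hat{\Lambda}}_{\mathfrak{m}}$-contributions on $\omega_p$ come from neighbouring semi-contact nodes, whose lumped pairings $\langle\hat{\Lambda}_{\mathfrak{m}},\phi_q\rangle c_q(\psi_p)$ are controlled by the localized dual norm $\|\hat{\Lambda}-\widetilde{\hat{\Lambda}}_{\mathfrak{m}}\|_{\ast,\epsilon,\omega_p}$ through the averaging construction of $c_q$ on the refined patch $\widetilde{\omega}_q$. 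Replacing $\bar{r}$ by $r$ introduces the oscillation term $\mathrm{osc}_p(r)$ and yields the claimed bound for $\eta^{\varphi}_{1,p}$.

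For $\eta^{\varphi}_{2,p}$, I would use an edge-bubble lifting on each side $\mathfrak{s}\subset\gamma_p^I$, taking $\psi_{\mathfrak{s}}$ as the $b_{\mathfrak{s}}$-weighted polynomial extension of $[\nabla\hat{\varphi}_{\mathfrak{m}}]$ into $\omega_{\mathfrak{s}}\subset\omega_p$. Testing the defect identity isolates $\int_{\mathfrak{s}}[\nabla\hat{\varphi}_{\mathfrak{m}}]^2$ on the right, while the residual volume contribution on $\omega_{\mathfrak{s}}$ is controlled by $\eta^{\varphi}_{1,p}$ already proven. The edge-bubble inverse estimates $\|\psi_{\mathfrak{s}}\|_{\omega_{\mathfrak{s}}}\lesssim h_{\mathfrak{s}}^{1/2}\|[\nabla\hat{\varphi}_{\mathfrak{m}}]\|_{\mathfrak{s}}$ and $\|\nabla\psi_{\mathfrak{s}}\|_{\omega_{\mathfrak{s}}}\lesssim h_{\mathfrak{s}}^{-1/2}\|[\nabla\hat{\varphi}_{\mathfrak{m}}]\|_{\mathfrak{s}}$ produce exactly the scaling $(G_c\epsilon)^{-1/4}\min\{\cdots\}^{1/2}$ encoded in $\eta^{\varphi}_{2,p}$. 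Step 3, for the Neumann-jump estimator $\eta^{\varphi}_{3,p}$, is structurally identical using a boundary-edge bubble on $\gamma_p^\Gamma$.

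The hard part will be the bookkeeping of $\langle\widetilde{\hat{\Lambda}}_{\mathfrak{m}},\psi_p\rangle_{-1,1}$ in Step 1 whenever $\omega_p$ touches semi-contact nodes. The lumping weights $c_q(\psi_p)$ introduce nonlocal couplings that must be folded back into $\|\hat{\Lambda}-\widetilde{\hat{\Lambda}}_{\mathfrak{m}}\|_{\ast,\epsilon,\omega_p}$ with constants independent of $\epsilon$; this is the precise purpose for which the refined patch $\widetilde{\omega}_q$ is used in the definition of $c_q$, and it is also the reason why $\mathfrak{N}^{fC}$ is excluded from the sums—at full-contact nodes $\widetilde{\hat{\Lambda}}_{\mathfrak{m}}$ already absorbs the element residual exactly and no bubble-function lower bound is needed (nor would a robust one generally hold).
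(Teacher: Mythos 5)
Your high-level plan (bubble functions adapted to the reaction--diffusion scaling, going through the Galerkin-functional defect identity, element bubbles for $\eta^{\varphi}_{1,p}$ and edge bubbles for $\eta^{\varphi}_{2,p},\eta^{\varphi}_{3,p}$, with the approximation $\bar r$ of $r$ producing the oscillation term) matches the paper. You also correctly identify the main difficulty: the semi-contact contributions $\langle\hat{\Lambda}_{\mathfrak{m}},\phi_q\rangle c_q(\psi_p)$ that appear when the test function meets the semi-contact zone. But the mechanism you propose for handling them does not work, and this is where the argument has a genuine gap.

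You claim that the terms $\langle\hat{\Lambda}_{\mathfrak{m}},\phi_q\rangle c_q(\psi_p) = \langle\widetilde{\hat{\Lambda}}_{\mathfrak{m}}^{\,q},\psi_p\phi_q\rangle_{-1,1}$ are ``controlled by the localized dual norm $\|\hat{\Lambda}-\widetilde{\hat{\Lambda}}_{\mathfrak{m}}\|_{\ast,\epsilon,\omega_p}$ through the averaging construction of $c_q$.'' This is false in general. Writing $\langle\widetilde{\hat{\Lambda}}_{\mathfrak{m}},\psi_p\rangle_{-1,1}=\langle\widetilde{\hat{\Lambda}}_{\mathfrak{m}}-\hat{\Lambda},\psi_p\rangle_{-1,1}+\langle\hat{\Lambda},\psi_p\rangle_{-1,1}$, the first term is indeed bounded by the localized dual norm, but the second one is the constraining force itself, a physical quantity with no reason to be small in terms of the error. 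In particular $\langle\hat{\Lambda}_{\mathfrak{m}},\phi_q\rangle \ge 0$ and $c_q(\psi_p)\ge 0$ for a nonnegative bubble, so the term is nonnegative and does not tend to zero under refinement; it cannot be folded into the error measure. The whole point of the semi-contact set is precisely that the complementarity identity fails there, so this term is not a ``small correction''.

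The paper instead avoids the term entirely, by a much more specific construction than ``an element-bubble supported in $\omega_p$''. The test function is $\bar r(\hat{\varphi}_{\mathfrak{m}})\theta_{\mathfrak{e}}$ where $\theta_{\mathfrak{e}}=\sum_j a_j\Psi_j$ is a linear combination of bubbles attached to the elements of a three-times red-refined submesh of $\mathfrak{e}$. The coefficients are chosen to satisfy two kinds of moment conditions: the coefficients on sub-elements touching a semi-contact vertex are set to zero (conditions~\eqref{eq:MeanValueZero1}), which forces $c_q(\bar r\theta_{\mathfrak{e}})=0$ at every semi-contact $q$; and the remaining coefficients are fixed so that $\int_{\mathfrak{e}}\phi_q\phi_r=\sum_{p\notin\mathfrak{N}^{fC}}\int_{\mathfrak{e}}\phi_q\phi_r\theta_{\mathfrak{e}}\phi_p$ (conditions~\eqref{eq:CondBubbleFunc}), which recovers $\|\bar r\|_{\mathfrak{e}}^2=\sum_p\int_{\mathfrak{e}}\bar r^2\theta_{\mathfrak{e}}\phi_p$. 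This is what the refined patch $\widetilde\omega_q$ in the definition of $c_q$ is for: it is small enough (relative to the red-refined sub-elements) that one can annihilate $c_q(\cdot)$ at semi-contact nodes while still retaining enough degrees of freedom for the norm-equivalence conditions. Then the constraining-force term is not estimated --- it is identically zero --- and the remainder is $\langle\mathcal{G}_{\mathfrak{m}},\bar r\theta_{\mathfrak{e}}\rangle_{-1,1}$ plus oscillation, which is the only place the dual norm enters. The same device (a linear combination $\theta_{\delta,\mathfrak{s}}$ of modified side bubbles with analogous moment conditions~\eqref{eq:MeanValueZeroTriangle}--\eqref{eq:RelNormIntRectangle}) is needed for $\eta^{\varphi}_{2,p}$ and $\eta^{\varphi}_{3,p}$; the edge-bubble lifting you describe is the standard one of Verf\"urth and would again leave an uncontrollable semi-contact term. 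A secondary imprecision: ``an element-bubble supported in $\omega_p$ and vanishing on $\partial\omega_p$'' does not make interior jumps on $\gamma_p^I$ drop out; for that you need a bubble supported in a single element $\mathfrak{e}$, which the paper uses, combining the per-element bounds afterward.
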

To formulate the local lower bound by $\eta^{\varphi}_{4,p}$ we make use of the definition $\overline{\nabla|_{\mathfrak{e}} v_{\mathfrak{m}}}:= \nabla|_{\mathfrak{e}} v_{\mathfrak{m}}(\zeta_{\mathfrak{e}}) $ as a piecewise constant approximation of  $\nabla|_{\mathfrak{e}} v_{\mathfrak{m}}$ for $v_{\mathfrak{m}}\in \mathcal{H}_{\mathfrak{m}}$, where $\zeta_{\mathfrak{e}} \in \mathfrak{e}$ is a suitably chosen point that will be defined in the proof of Theorem~\ref{Theorem:LowerBound2}.
\begin{theorem}{\bf Local lower bound by $\eta^{\varphi}_{4,p}$}\label{Theorem:LowerBound2}\\
For nodes $p\in\mathfrak{N}^{sC}$ with $\frac{h_p}{\sqrt{G_c\epsilon}} \leq \alpha_p^{-\frac{1}{2}}$ we have the robust local lower bound
\begin{equation}\label{eq:LowerBound4_Robust}
\begin{split}
\eta^{\varphi}_{4,p} \lesssim&\; \|\hat{\varphi}-\hat{\varphi}_{\mathfrak{m}}\|_{\epsilon,\omega_p} + \|\hat{\Lambda}-\widetilde{\hat{\Lambda}}_{\mathfrak{m}}\|_{\ast,\epsilon,\omega_p} + \mathrm{osc}_p(r) \\
&\;  + \mathrm{min}\{\frac{h_p}{\sqrt{G_c\epsilon}},\alpha_p^{-\frac{1}{2}}\}^{\frac{1}{2}}(G_c\epsilon)^{-\frac{1}{4}}\|G_c\epsilon[\overline{\nabla (I_{\mathfrak{m}}^n(\varphi^{n-1}_{\mathfrak{m}})-\hat{\varphi}_{\mathfrak{m}})}]\|_{\gamma^I_p}\\
\end{split}
\end{equation}
Otherwise, for nodes $p\in\mathfrak{N}^{sC}$ with $\alpha_p^{-\frac{1}{2}}< \frac{h_p}{\sqrt{G_c\epsilon}}$
we have the local lower bound
\begin{equation}\label{eq:LowerBound4_NonRobust}
\begin{split}
\eta^{\varphi}_{4,p} \lesssim  &\;\|\hat{\varphi}-\hat{\varphi}_{\mathfrak{m}}\|_{\epsilon,\omega_p} + \|\hat{\Lambda}-\widetilde{\hat{\Lambda}}_{\mathfrak{m}}\|_{\ast,\epsilon,\omega_p}+ \mathrm{osc}_p(r)\\ &\; + \mathrm{max} \{\alpha_p(G_c\epsilon)^{-2}, \alpha_p^{\frac{1}{2}} (G_c\epsilon)^{-\frac{3}{2}}\}\|G_c\epsilon[\overline{\nabla (I_{\mathfrak{m}}^n(\varphi^{n-1}_{\mathfrak{m}})-\hat{\varphi}_{\mathfrak{m}})}]\|^2_{\gamma^I_p}.
\end{split}
\end{equation}
\end{theorem}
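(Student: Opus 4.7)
The plan is to expand the square as $(\eta^{\varphi}_{4,p})^2 = s_p \int_{\widetilde{\omega}_p}\xi\phi_p$ with $\xi:=I_{\mathfrak{m}}^n(\varphi^{n-1}_{\mathfrak{m}})-\hat{\varphi}_{\mathfrak{m}}$, to bound the two factors $s_p$ and $\int_{\widetilde{\omega}_p}\xi\phi_p$ separately, and then to recombine them through a weighted Young inequality. The key observation is that at a semi-contact node $p$ the function $\xi$ is a nonnegative piecewise (bi-)linear discrete function with $\xi(p)=0$ but not identically zero on $\omega_p$; consequently its size on the patch is controlled purely by the discrete gradient jumps $[\overline{\nabla\xi}]$ on $\gamma^I_p$, which is exactly why the additional jump term appears in the bound and why it is a purely discrete quantity.

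For the $s_p$-factor I would use the defining relation $s_p\int_{\omega_p}\phi_p = \langle\hat{\Lambda}_{\mathfrak{m}},\phi_p\rangle_{-1,1}$ and, by elementwise integration by parts as in the semi-contact definition of $\widetilde{\hat{\Lambda}}_{\mathfrak{m}}^p$, rewrite the right-hand side as a combination of the volume residual $r(\hat{\varphi}_{\mathfrak{m}})$, the interior gradient jumps, and the boundary gradient terms of $\hat{\varphi}_{\mathfrak{m}}$. An $L^2$ Cauchy--Schwarz on each contribution, followed by division by $\int_{\omega_p}\phi_p$, yields $s_p$ as a weighted combination of $\eta^{\varphi}_{1,p},\eta^{\varphi}_{2,p},\eta^{\varphi}_{3,p}$. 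Theorem~\ref{Theorem:LowerBound} then replaces these by the local error plus the oscillation $\mathrm{osc}_p(r)$.

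For the second factor I would exploit that $\xi$ is piecewise (bi-)linear with $\xi(p)=0$, so on each element $\mathfrak{e}\subset\omega_p$ the function $\xi$ is pointwise controlled by $h_p$ times its elementwise gradient. The representative point $\zeta_{\mathfrak{e}}$ must therefore be chosen so that $\overline{\nabla\xi}|_{\mathfrak{e}}$ faithfully represents the true elementwise gradient; once this is done, a standard patchwise finite element argument controls the piecewise constant gradient by the jumps $[\overline{\nabla\xi}]$ on $\gamma^I_p$ up to a geometric factor, giving a bound on $\int_{\widetilde{\omega}_p}\xi\phi_p$ proportional to $h_p^{3/2}$ times the jump norm appearing in the statement.

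Finally, multiplying the two factor bounds and applying Young's inequality distributes the powers of $G_c\epsilon$, $\alpha_p$ and $h_p$. In the well-resolved regime $h_p/\sqrt{G_c\epsilon}\leq\alpha_p^{-1/2}$, the weights can be calibrated so that the error-type contributions appear linearly and the jump enters with the announced robust prefactor $\min\{h_p/\sqrt{G_c\epsilon},\alpha_p^{-1/2}\}^{1/2}(G_c\epsilon)^{-1/4}$. In the under-resolved regime $\alpha_p^{-1/2}<h_p/\sqrt{G_c\epsilon}$, the $\epsilon$-dependent interpolation is no longer favourable because the reaction coefficient $\alpha_p$ dominates, and the only way to absorb $s_p$ into the error without sacrificing $\epsilon$-robustness of the linear terms is to use a quadratic bound in the jump, yielding the larger factor $\max\{\alpha_p(G_c\epsilon)^{-2},\alpha_p^{1/2}(G_c\epsilon)^{-3/2}\}$. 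The main obstacle, and the most delicate part of the proof, is balancing these scalings so that exactly the stated cases of the minimum and maximum emerge and the constant remains $\epsilon$-independent in the robust case.
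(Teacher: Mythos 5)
Your proposal follows the paper's proof structure: you split $(\eta^{\varphi}_{4,p})^2 = s_p\int_{\widetilde\omega_p}v_{\mathfrak m}\phi_p$ into the constraining-force factor $\langle\hat\Lambda_{\mathfrak m},\phi_p\rangle$ and the obstacle-gap factor $c_p(v_{\mathfrak m})$, control the first by the residual representation (Cauchy--Schwarz giving $\eta^{\varphi}_{1,p},\eta^{\varphi}_{2,p},\eta^{\varphi}_{3,p}$ type terms after rebalancing), control the second by a Taylor/telescoping argument exploiting $v_{\mathfrak m}\geq 0$, $v_{\mathfrak m}(p)=0$ to get the jump $\|[\overline{\nabla v_{\mathfrak m}}]\|_{\gamma^I_p}$, recombine via Young's inequality, and then invoke Theorem~\ref{Theorem:LowerBound} to pass to the error plus oscillation, splitting into the two $\epsilon$-regimes at the end --- this is exactly the paper's route. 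Two small imprecisions worth fixing: the power on the obstacle-gap factor should read $\int_{\widetilde\omega_p}v_{\mathfrak m}\phi_p\lesssim h_p^{5/2}\|[\overline{\nabla v_{\mathfrak m}}]\|_{\gamma^I_p}$ (not $h_p^{3/2}$), since $\int_{\widetilde\omega_p}\phi_p\approx h_p^2$ and $v_{\mathfrak m}(\hat q)\lesssim h_p^{1/2}\|[\overline{\nabla v_{\mathfrak m}}]\|_{\gamma^I_p}$; and ``a standard patchwise finite element argument'' slightly undersells the step --- one cannot bound a piecewise-constant gradient field on $\omega_p$ by its interior jumps in general, so the sign conditions $\nabla|_{\mathfrak e_1}v_{\mathfrak m}(\zeta_{\mathfrak e_1})\cdot\tau\ge0$ and $\nabla|_{\mathfrak e_N}v_{\mathfrak m}(\zeta_{\mathfrak e_N})\cdot(-\tau)\ge0$ obtained from Taylor in the two opposite directions through $p$ are essential to turn the telescoping difference into a genuine jump sum.
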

\begin{remark}
We note that the additional term in the bound~\eqref{eq:LowerBound4_Robust} only occurs for $p\in\mathfrak{N}^{sC}$ and is of the same order as the other estimator contributions. 
In the application, we expect the semi-contact zone to be well resolved, especially with respect to $\epsilon$ meaning $\frac{h_p}{\sqrt{G_c\epsilon}} \leq \alpha_p^{-\frac{1}{2}}$ 
after a finite number of adaptive refinement steps such that the local lower bound is robust everywhere. 
 \end{remark}

\section{Reliability of the estimator}\label{Sec:Reliability}
To derive the error estimator, we replace the linear residual which is
used in the derivation of a posteriori estimators for linear elliptic
equations by a so-called Galerkin functional which takes into account
the errors in both unknowns
\begin{equation}\label{eq:DefGalerkinFunctional}
 \begin{aligned}
 \bigl<\mathcal{G}_{\mathfrak{m}}&,\psi\bigr>_{-1,1} := a_{\mathfrak{m}, \epsilon}(\hat{\varphi}-\hat{\varphi}_{\mathfrak{m}},\psi) + \left<\hat{\Lambda}-\widetilde{\hat{\Lambda}}_{\mathfrak{m}},\psi\right>_{-1,1}\\ 
&=\left<\frac{G_c}{\epsilon},\psi\right> - a_{\mathfrak{m},\epsilon}( \hat{\varphi}_{\mathfrak{m}}, \psi) - \left< \widetilde{\hat{\Lambda}}_{\mathfrak{m}},\psi \right>_{-1,1}\\ 
&= \sum_{p\in\mathfrak{N}\backslash \mathfrak{N}^{fC}} \Biggl( \int_{\gamma_p^I}G_c\epsilon[\nabla \hat{\varphi}_{\mathfrak{m}}](\psi-c_p(\psi))\phi_p\\
&\qquad\qquad - \int_{\gamma_p^\Gamma}(G_c\epsilon\nabla \hat{\varphi}_{\mathfrak{m}}\cdot\boldsymbol{n}_{\mathfrak{e}})(\psi-c_p(\psi))\phi_p
 +\int_{\omega_p} r(\hat{\varphi}_{\mathfrak{m}})(\psi-c_p(\psi))\phi_p\Biggr). 
\end{aligned}
\end{equation}
Where the last equality is obtained as usual by
 utilizing Galerkin-orthogonality and element-wise integration by parts.

The relation between the dual norm of the Galerkin functional $\|\mathcal{G}_{\mathfrak{m}}\|_{\ast,\epsilon}$ and the error measure~\eqref{eq:ErrorMeasure} follows from
\begin{eqnarray}\label{eq:LowerBound0}
\|\mathcal{G}_{\mathfrak{m}}\|_{\ast,\epsilon}\lesssim \|\hat{\varphi}-\hat{\varphi}_{\mathfrak{m}}\|_{\epsilon}+\|\hat{\Lambda}-\widetilde{\hat{\Lambda}}_{\mathfrak{m}}\|_{\ast,\epsilon},
\end{eqnarray}
and
\begin{align}\label{eq:UpperBound0}
\|\hat{\varphi}-\hat{\varphi}_{\mathfrak{m}}\|^2_{\epsilon}\leq \|\mathcal{G}_{\mathfrak{m}}\|^2_{\ast,\epsilon} + 2\left< \widetilde{\hat{\Lambda}}_{\mathfrak{m}}-\hat{\Lambda},\hat{\varphi}-\hat{\varphi}_{\mathfrak{m}} \right>_{-1,1},
\end{align}
and
\begin{align}\label{eq:UpperBound1}
\|\hat{\Lambda}-\widetilde{\hat{\Lambda}}_{\mathfrak{m}}\|^2_{\ast,\epsilon}\leq 2\left(\|\mathcal{G}_{\mathfrak{m}}\|^2_{\ast,\epsilon}+ \|\hat{\varphi}-\hat{\varphi}_{\mathfrak{m}}\|^2_{\epsilon}\right),
\end{align}
compare~\cite[Lemma 3.4]{Veeser_2001}.

Based on the combination of~\eqref{eq:UpperBound0} and~\eqref{eq:UpperBound1}
\begin{align}\label{eq:UpperBoundAbstract}
\|\hat{\varphi}-\hat{\varphi}_{\mathfrak{m}}\|^2_{\epsilon}+\|\hat{\Lambda}-\widetilde{\hat{\Lambda}}_{\mathfrak{m}}\|^2_{\ast,\epsilon}\leq 5\|\mathcal{G}_{\mathfrak{m}}\|^2_{\ast,\epsilon} + 6\left< \widetilde{\hat{\Lambda}}_{\mathfrak{m}}-\hat{\Lambda},\hat{\varphi}-\hat{\varphi}_{\mathfrak{m}} \right>_{-1,1}
\end{align}
the reliability of the estimator follows from a computable upper bound of $\|\mathcal{G}_{\mathfrak{m}}\|^2_{\ast,\epsilon}$ and of $\left< \widetilde{\hat{\Lambda}}_{\mathfrak{m}}-\hat{\Lambda},\hat{\varphi}-\hat{\varphi}_{\mathfrak{m}} \right>_{-1,1}$. 
\begin{lemma}[Upper bound of Galerkin functional]\label{UpperBoundGalerkin}
The Galerkin functional defined in~\eqref{eq:DefGalerkinFunctional} satisfies 
\begin{equation*}
\|\mathcal{G}_{\mathfrak{m}}\|_{\ast,\epsilon}\lesssim \left(\sum_{k=1}^3(\eta^{\varphi}_k)^2\right)^{\frac{1}{2}}.
\end{equation*}
\end{lemma}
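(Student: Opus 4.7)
The plan is to bound $\bigl|\langle\mathcal{G}_{\mathfrak{m}},\psi\rangle_{-1,1}\bigr|$ for arbitrary $\psi\in H^1$ by a constant multiple of $\bigl(\sum_{k=1}^3(\eta^{\varphi}_k)^2\bigr)^{1/2}\|\psi\|_{\epsilon}$ and then take the supremum. As starting point I use the patch-localized expansion provided in the last equality of~\eqref{eq:DefGalerkinFunctional}, which is obtained from Galerkin orthogonality and elementwise integration by parts and already splits the functional into element residuals, interior skeleton jumps, and Neumann-type boundary fluxes, each paired against $(\psi-c_p(\psi))\phi_p$ and summed over $p\in\mathfrak{N}\setminus\mathfrak{N}^{fC}$. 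Applying Cauchy--Schwarz term by term reduces the problem to obtaining robust (in $\epsilon$) $L^2$-bounds on this test weight on $\omega_p$, $\gamma_p^I$ and $\gamma_p^\Gamma$.

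The core technical step is to establish two Verf\"urth-type interpolation estimates
\begin{align*}
\|(\psi-c_p(\psi))\phi_p\|_{L^2(\omega_p)}
&\lesssim \min\bigl\{\tfrac{h_p}{\sqrt{G_c\epsilon}},\,\alpha_p^{-1/2}\bigr\}\,\|\psi\|_{\epsilon,\omega_p},\\
\|(\psi-c_p(\psi))\phi_p\|_{L^2(\gamma_p^I\cup\gamma_p^\Gamma)}
&\lesssim \min\bigl\{\tfrac{h_p}{\sqrt{G_c\epsilon}},\,\alpha_p^{-1/2}\bigr\}^{1/2}(G_c\epsilon)^{-1/4}\,\|\psi\|_{\epsilon,\omega_p}.
\end{align*}
Each minimum emerges from two competing routes. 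Since $c_p(1)=1$, $c_p$ reproduces constants, so a Poincar\'e-type inequality on the shape-regular patch gives $h_p\|\nabla\psi\|_{\omega_p}$, which rewrites as $(h_p/\sqrt{G_c\epsilon})\|\psi\|_{\epsilon,\omega_p}$ via the diffusion piece $\sqrt{G_c\epsilon}\|\nabla\psi\|\le\|\psi\|_{\epsilon}$ of the energy norm; alternatively, the $L^2$-stability of the weighted average $c_p$ together with $|\phi_p|\le1$ yields $\|\psi\|_{L^2(\omega_p)}$, which is bounded by $\alpha_p^{-1/2}\|\psi\|_{\epsilon,\omega_p}$ directly from the reaction piece of the norm, using the definition~\eqref{eq:Def_alpha_p} of $\alpha_p$. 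For the skeleton bound I combine these volumetric estimates with the scaled trace inequality $\|v\|^2_{L^2(\partial K)}\lesssim h_K^{-1}\|v\|^2_{L^2(K)}+h_K\|\nabla v\|^2_{L^2(K)}$ and geometrically interpolate the two branches so that the common factor $\min\{\cdot\}^{1/2}(G_c\epsilon)^{-1/4}$ appears.

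Inserting these bounds into the three term types produces exactly the nodal contributions $\eta^{\varphi}_{1,p}$, $\eta^{\varphi}_{2,p}$, $\eta^{\varphi}_{3,p}$ after one application of Cauchy--Schwarz on each integral. A final Cauchy--Schwarz in the node sum, together with the finite overlap of the patches, then assembles the claimed bound. The main obstacle I expect is proving the two interpolation estimates with $\epsilon$-independent constants: the Poincar\'e branch only requires shape regularity and the constant-reproduction property, but carefully tracking powers of $h_p$ and $G_c\epsilon$ so that both branches collapse into the precise weight appearing in $\eta^{\varphi}_2$ and $\eta^{\varphi}_3$ is delicate; moreover, handling the mild mismatch between the averaging domain $\widetilde{\omega}_p$ of $c_p$ (the patch on the thrice red-refined mesh) and the integration domain $\omega_p$ requires a minor extension of the standard Cl\'ement-type argument, but does not affect the order of the bound.
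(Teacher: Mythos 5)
Your high-level structure matches the paper's: expand $\mathcal{G}_{\mathfrak{m}}$ via the patch-localized representation in~\eqref{eq:DefGalerkinFunctional}, apply Cauchy--Schwarz, and reduce to the two weighted $L^2$-approximation estimates~\eqref{eq:L2ApproxEnergyElement}--\eqref{eq:L2ApproxEnergySide}, whose element part you derive exactly as the paper does (Poincar\'e branch versus $L^2$-stability branch, then take the minimum). The final assembly via Cauchy--Schwarz over patches with finite overlap is also the paper's route. Your remark about the $\widetilde{\omega}_p$ versus $\omega_p$ mismatch being a minor extension of the Cl\'ement argument is fine.

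However, there is a genuine gap in the side estimate~\eqref{eq:L2ApproxEnergySide}. You propose to use the \emph{additive} scaled trace inequality $\|v\|^2_{L^2(\partial K)}\lesssim h_K^{-1}\|v\|^2_{L^2(K)}+h_K\|\nabla v\|^2_{L^2(K)}$ with $v=(\psi-c_p(\psi))\phi_p$. Bounding $\|\nabla v\|_K$ by the product rule gives $\|\nabla v\|_K\lesssim\|\nabla\psi\|_K+h_K^{-1}\|\psi-c_p(\psi)\|_K$, and since $\psi\in H^1$ is not a discrete function the only available bound on $\|\nabla\psi\|_K$ is $(G_c\epsilon)^{-1/2}\|\psi\|_{\epsilon,K}$. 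Plugging in, the second term of the additive inequality produces a contribution $h_K(G_c\epsilon)^{-1}\|\psi\|^2_{\epsilon}$. In the reaction-dominated regime $\alpha_p^{-1/2}<h_p/\sqrt{G_c\epsilon}$, where $\min\{\cdot\}=\alpha_p^{-1/2}$, the target bound is $\alpha_p^{-1/2}(G_c\epsilon)^{-1/2}\|\psi\|^2_{\epsilon}$, and $h_K(G_c\epsilon)^{-1}\geq\alpha_p^{-1/2}(G_c\epsilon)^{-1/2}$ precisely by the case assumption, so the estimate is not robust; ``geometrically interpolating'' the two terms cannot recover it because the additive inequality is strictly weaker than what you need (AM--GM goes the wrong way). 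The paper instead invokes the \emph{multiplicative} trace inequality from~\cite[Lemma~3.2]{Verfuerth_1998b}, $\|(\psi-c_p(\psi))\phi_p\|_{\mathfrak{s}}\lesssim\|(\psi-c_p(\psi))\phi_p\|_{0,\omega_{\mathfrak{s}}}^{1/2}\|\nabla((\psi-c_p(\psi))\phi_p)\|_{0,\omega_{\mathfrak{s}}}^{1/2}$ (available here because $\phi_p$ vanishes on $\partial\omega_p$). With that sharper inequality the problematic cross term becomes $\min\{\cdot\}^{1/2}(G_c\epsilon)^{-1/4}\|\psi\|_{\epsilon}$ in both regimes and the bound closes. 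So you should replace the additive trace inequality by its multiplicative counterpart; with that repair the rest of your argument goes through.
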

We will give the proof of Lemma~\ref{UpperBoundGalerkin} with the help of Lemma~\ref{LemmaL2Approx}. We use the same ideas as in~\cite{Walloth_2018b} but due to the different problem, discretization, and error measure some adaptations and comments are required.

Further, we make use of $h_p\approx h_{\mathfrak{e}}\approx h_{\mathfrak{s}}$ with $h_{\mathfrak{s}}=\mathrm{diam}(\omega_{\mathfrak{s}})$; which follows from the assumed shape regularity.
\begin{lemma}[$L^2$-approximation with respect to energy norm~\eqref{eq:EnergyNorm}]\label{LemmaL2Approx}
Let  $c_p(\psi)=\frac{\int_{\tilde{\omega}_p}\psi\phi_p}{\int_{\tilde{\omega}_p}\phi_p}$
with $\tilde{\omega}_p\subset\omega_p$ the patch around $p$ with respect to a three times uniformly red-refined mesh. Then the $L^2$-approximation properties with respect to the energy norm~\eqref{eq:EnergyNorm} hold
\begin{align}\label{eq:L2ApproxEnergyElement}
 \|(\psi-c_p(\psi))\phi_p\|_{\omega_p}&\lesssim\mathrm{min}\{\frac{h_p}{\sqrt{G_c\epsilon}},\alpha_p^{-\frac{1}{2}}\} \|\psi\|_{\epsilon,\omega_p}\\ \label{eq:L2ApproxEnergySide}
 \|(\psi-c_p(\psi))\phi_p\|_{\mathfrak{s}}&\lesssim\mathrm{min}\{\frac{h_p}{\sqrt{G_c\epsilon}},\alpha_p^{-\frac{1}{2}}\}^{\frac{1}{2}}(G_c\epsilon)^{-\frac{1}{4}}\|\psi\|_{\epsilon,\omega_{\mathfrak{s}}}.
 \end{align}
\end{lemma}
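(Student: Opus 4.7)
The plan is to prove both estimates by establishing two competing upper bounds---one driven by the gradient part and one by the weighted zero-order part of the energy norm---and then taking their minimum. The two ingredients that follow at once from~\eqref{eq:EnergyNorm} and the definition of $\alpha_p$ are $\sqrt{G_c\epsilon}\,\|\nabla\psi\|_{\omega_p}\le\|\psi\|_{\epsilon,\omega_p}$ and $\sqrt{\alpha_p}\,\|\psi\|_{\omega_p}\le\|\psi\|_{\epsilon,\omega_p}$.

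\textbf{Element bound.} For~\eqref{eq:L2ApproxEnergyElement} I would proceed in two parallel ways. First, using $0\le\phi_p\le 1$ and a Cl\'ement/Poincar\'e-type estimate on the shape-regular patch---noting that the refined subpatch $\tilde\omega_p$ inherits shape regularity from the fixed number of uniform red refinements used in its definition---the weighted mean $c_p(\psi)$ satisfies $\|\psi-c_p(\psi)\|_{\omega_p}\lesssim h_p\|\nabla\psi\|_{\omega_p}$, which together with the gradient bound above yields the competitor $h_p/\sqrt{G_c\epsilon}\,\|\psi\|_{\epsilon,\omega_p}$. Second, Cauchy--Schwarz in the defining integral of $c_p(\psi)$ combined with the scalings $\int_{\tilde\omega_p}\phi_p\sim h_p^2$ and $\|\phi_p\|_{\omega_p}\lesssim h_p$ gives $|c_p(\psi)|\,\|\phi_p\|_{\omega_p}\lesssim\|\psi\|_{\omega_p}$, hence $\|(\psi-c_p(\psi))\phi_p\|_{\omega_p}\lesssim\|\psi\|_{\omega_p}$, which produces the second competitor $\alpha_p^{-1/2}\|\psi\|_{\epsilon,\omega_p}$. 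Taking the minimum proves~\eqref{eq:L2ApproxEnergyElement}.

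\textbf{Side bound.} For~\eqref{eq:L2ApproxEnergySide} the idea is to apply the multiplicative trace inequality
\begin{align*}
\|v\|^2_{\mathfrak{s}}\lesssim h_{\mathfrak{s}}^{-1}\|v\|^2_{\omega_{\mathfrak{s}}}+\|v\|_{\omega_{\mathfrak{s}}}\,\|\nabla v\|_{\omega_{\mathfrak{s}}}
\end{align*}
to $v:=(\psi-c_p(\psi))\phi_p$. The $L^2$ factor $\|v\|_{\omega_{\mathfrak{s}}}$ is then controlled by~\eqref{eq:L2ApproxEnergyElement} (with $\omega_{\mathfrak{s}}\subset\omega_p$), while the product rule together with a Poincar\'e estimate on the subpatch gives $\|\nabla v\|_{\omega_{\mathfrak{s}}}\lesssim\|\nabla\psi\|_{\omega_p}\le(G_c\epsilon)^{-1/2}\|\psi\|_{\epsilon,\omega_p}$. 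Substituting and taking square roots produces exactly the claimed factor $\min\{h_p/\sqrt{G_c\epsilon},\alpha_p^{-1/2}\}^{1/2}(G_c\epsilon)^{-1/4}$ after a brief case analysis on which of the two competitors realizes the minimum.

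\textbf{Main obstacle.} The only delicate point I anticipate is in the side estimate: a priori the term $h_{\mathfrak{s}}^{-1}\|v\|^2_{\omega_{\mathfrak{s}}}$ generates a contribution of order $h_p^{-1}\alpha_p^{-1}$ in the regime $\alpha_p^{-1/2}<h_p/\sqrt{G_c\epsilon}$, which at first glance looks strictly worse than the desired $\alpha_p^{-1/2}(G_c\epsilon)^{-1/2}$. The key observation is that the very inequality defining this regime rearranges to $h_p^{-1}\alpha_p^{-1}\le\alpha_p^{-1/2}(G_c\epsilon)^{-1/2}$, so the mixed term from the trace inequality always dominates and sets the correct geometric-mean scale. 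Once this is in place, everything else reduces to standard Cl\'ement-type approximation on shape-regular (refined) patches.
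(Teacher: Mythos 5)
Your proposal is correct and follows essentially the same route as the paper's proof: two competing bounds (Poincar\'e with the gradient weight, stability of the weighted mean with the reaction weight) give the element estimate by taking the minimum, and the side estimate then follows from a multiplicative trace inequality plus the product rule, with the $h^{-1}$ contribution absorbed by exactly the rearrangement $h_p^{-1}\min\{h_p/\sqrt{G_c\epsilon},\alpha_p^{-1/2}\}\le (G_c\epsilon)^{-1/2}$ that you single out as the delicate point. The only cosmetic difference is that the paper cites the trace inequality in the pure multiplicative form of Verf\"urth and regenerates the $h^{-1}$ term from the product rule applied to $\nabla\phi_p$, while you state the trace inequality with the $h^{-1}$ term built in; the resulting terms are identical.
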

\begin{proof}
As in~\cite[Lemma~3]{Walloth_2018b}, we can derive 
\begin{align}\nonumber
\|\psi-c_p(\psi)\|_{\omega_p} \lesssim h_p \|\nabla\psi\|_{\omega_p} = \frac{h_p}{\sqrt{G_c\epsilon}}\sqrt{G_c\epsilon}\|\nabla\psi\|_{\omega_p}.
\end{align}
Using the definition of $\alpha_p$ in \eqref{eq:Def_alpha_p} it also holds
\begin{align*}
\|\psi-c_p(\psi)\|_{\omega_p}\lesssim \|\psi\|_{\omega_p} \lesssim \alpha_p^{-\frac{1}{2}}\|\left(\frac{G_c}{\epsilon}+(1-\kappa)\tensor{\sigma}(\tensor{u}^n_{\mathfrak{m}}):\tensor{E}_{\mathrm{lin}}(\tensor{u}^n_{\mathfrak{m}})\right)^{\frac{1}{2}}\psi\|_{\omega_p}.
\end{align*}
Together, we deduce the $L^2$-approximation property with respect to the energy norm
\begin{align*}
\|\psi-c_p(\psi)\|_{\omega_p} \lesssim \mathrm{min}\{\frac{h_p}{\sqrt{G_c\epsilon}},\alpha_p^{-\frac{1}{2}}\}\|\psi\|_{\epsilon,\omega_p}. 
\end{align*}

It remains to derive the $L^2$- approximation property for sides $\mathfrak{s}$. The result~\cite[Lemma 3.2]{Verfuerth_1998b} can be extended to bilinear finite elements on parallelograms. Thus, we have 
 \begin{align*}
\|(\psi-c_p(\psi))\phi_p\|_{\mathfrak{s}}
\leq\|(\psi-c_p(\psi))\phi_p\|_{0,\omega_{\mathfrak{s}}} ^{\frac{1}{2}}\|\nabla((\psi-c_p(\psi))\phi_p)\|^{\frac{1}{2}}_{0,\omega_{\mathfrak{s}}} 
\end{align*}
We can further proceed as in~\cite[Lemma~3]{Walloth_2018b}. 
We apply the product rule and triangle inequality 
\begin{align*}
\|\nabla((\psi-c_p(\psi))\phi_p)\|_{\mathfrak{e}}&\leq\|\nabla(\psi-c_p(\psi))\phi_p\|_{\mathfrak{e}} + \|(\psi-c_p(\psi))\nabla\phi_p\|_{\mathfrak{e}}\\
&\lesssim \|\nabla(\psi-c_p(\psi))\|_{\mathfrak{e}} + h_{\mathfrak{e}}^{-\frac{1}{2}}\|(\psi-c_p(\psi))\|_{\mathfrak{e}}.
\end{align*} 
Next, we apply the $L^2$-approximation property~\eqref{eq:L2ApproxEnergyElement} on the elements and $\|\nabla\psi\|_{\omega_\mathfrak{s}}\leq \frac{1}{\sqrt{G_c\epsilon}}\|\psi\|_{\epsilon,\omega_{\mathfrak{s}}}$ to get the $L^2$-approximation property on the sides
 \begin{align*}
&\|(\psi-c_p(\psi))\phi_p\|_{\mathfrak{s}}\\
&\leq\|(\psi-c_p(\psi))\phi_p\|_{0,\omega_{\mathfrak{s}}} ^{\frac{1}{2}}\|\nabla((\psi-c_p(\psi))\phi_p)\|^{\frac{1}{2}}_{0,\omega_{\mathfrak{s}}} \\
&\lesssim h_p^{-\frac{1}{2}}\|(\psi-c_p(\psi))\|_{0,\omega_{\mathfrak{s}}} + \|(\psi-c_p(\psi))\phi_p\|^{\frac{1}{2}}_{0,\omega_{\mathfrak{s}}}\|\nabla((\psi-c_p(\psi))) \|^{\frac{1}{2}}_{0,\omega_{\mathfrak{s}}}\\
&\lesssim h_p^{-\frac{1}{2}}\mathrm{min}\{\frac{h_p}{\sqrt{G_c\epsilon}}, \alpha_p^{-\frac{1}{2}}\}\|\psi\|_{\epsilon,\omega_{\mathfrak{s}}} +\mathrm{min}\{\frac{h_p}{\sqrt{G_c \epsilon}}, \alpha_p^{-\frac{1}{2}}\}^{\frac{1}{2}} \|\psi\|^{\frac{1}{2}}_{\epsilon,\omega_{\mathfrak{s}}}(G_c\epsilon)^{-\frac{1}{4}}\|\psi\|^{\frac{1}{2}}_{\epsilon,\omega_{\mathfrak{s}}}\\
&\lesssim \left(\mathrm{min}\{\frac{h_p}{\sqrt{G_c\epsilon}},\alpha_p^{-\frac{1}{2}}\}^{\frac{1}{2}} \mathrm{min}\{\frac{1}{\sqrt{G_c\epsilon}}, \frac{\alpha_p^{-\frac{1}{2}}}{h_p}\}^{\frac{1}{2}} + \mathrm{min}\{\frac{h_p}{\sqrt{G_c\epsilon}}, \alpha_p^{-\frac{1}{2}}\}^{\frac{1}{2}} (G_c\epsilon)^{-\frac{1}{4}}\right)\|\psi\|_{\epsilon,\omega_{\mathfrak{s}}}\\
&\lesssim\mathrm{min}\{\frac{h_p}{\sqrt{G_c\epsilon}}, \alpha_p^{-\frac{1}{2}}\}^{\frac{1}{2}}(G_c\epsilon)^{-\frac{1}{4}}\|\psi\|_{\epsilon,\omega_{\mathfrak{s}}}
\end{align*}
\end{proof}
Together with these preliminary results, we can give the proof of Lemma~\ref{UpperBoundGalerkin}.
\begin{proof}[Proof of Lemma~\ref{UpperBoundGalerkin}] 
In order to derive an upper bound of the dual norm of the Galerkin functional, we use the representation~\eqref{eq:DefGalerkinFunctional} and Cauchy-Schwarz inequality 
\begin{equation}\label{eq:BoundGalerkin}
\begin{aligned}
 \left<\mathcal{G}_{\mathfrak{m}}, \psi\right>
&\leq
\sum_{p\in\mathfrak{N}\backslash \mathfrak{N}^{fC}} \Bigl( \|G_c\epsilon[\nabla \hat{\varphi}_{\mathfrak{m}}]\|_{\gamma_p^I}\|(\psi-c_p(\psi))\phi_p\|_{\gamma_p^I} \\
&\qquad\qquad\qquad +\|G_c\epsilon\nabla \hat{\varphi}_{\mathfrak{m}}\cdot\boldsymbol{n}_{\mathfrak{e}}\|_{\gamma_p^\Gamma}\|(\psi-c_p(\psi))\phi_p\|_{\gamma_p^\Gamma}\\ 
&\qquad\qquad\qquad +\|r(\hat{\varphi}_{\mathfrak{m}})\|_{\omega_p}\|(\psi-c_p(\psi))\phi_p\|_{\omega_p}\Bigr).
\end{aligned}
\end{equation}
Combining~\eqref{eq:BoundGalerkin},~\eqref{eq:L2ApproxEnergyElement}, and~\eqref{eq:L2ApproxEnergySide}, we get 
\begin{align*}
\left<\mathcal{G}_{\mathfrak{m}}, \psi \right>_{-1,1}&\lesssim \Biggl(\sum_{p\in\mathfrak{N}\backslash  \mathfrak{N}^{fC}}\left( \mathrm{min}\{\frac{h_p}{\sqrt{G_c\epsilon}},\alpha_p^{-\frac{1}{2}}\}^{\frac{1}{2}}(G_c\epsilon)^{-\frac{1}{4}}\|\epsilon[\nabla \hat{\varphi}_{\mathfrak{m}}]\|_{\gamma_p^I} \right. \\
\nonumber
&\qquad\qquad\qquad \left.\left.+\mathrm{min}\{\frac{h_p}{\sqrt{G_c\epsilon}},\alpha_p^{-\frac{1}{2}}\}^{\frac{1}{2}}(G_c\epsilon)^{-\frac{1}{4}}\|\epsilon\nabla \hat{\varphi}_{\mathfrak{m}}\cdot\boldsymbol{n}_{\mathfrak{e}}\|_{\gamma_p^\Gamma}\right.\right.\\ 
&\qquad\qquad\qquad\left.+\mathrm{min}\{\frac{h_p}{\sqrt{G_c\epsilon}},\alpha_p^{-\frac{1}{2}}\}\|r(\hat{\varphi}_{\mathfrak{m}})\|_{\omega_p}\right)^2\Biggr)^{\frac{1}{2}}\Biggl(\sum_{p\in\mathfrak{N}}\|\psi\|^2_{\epsilon,\omega_p}\Biggr)^{\frac{1}{2}}
\end{align*}
and thus the bound of the dual norm of the Galerkin functional 
\begin{align}\nonumber
\|\mathcal{G}_{\mathfrak{m}}\|_{\ast,\epsilon}&= \frac{\mathrm{sup}_{\psi\in H^1}\left<\mathcal{G}_{\mathfrak{m}}, \psi \right>_{-1,1}}{\|\psi\|_{\epsilon}}\lesssim \sum_{k=1}^3\eta^{\varphi}_{k}.
\end{align}
\end{proof}

\begin{lemma}[Complementarity residual]\label{LemmaComplementarityResidual}
It holds 
\begin{align}\nonumber
 \left< \tilde{\hat{\Lambda}}_{\mathfrak{m}}-\hat{\Lambda},\hat{\varphi}-\hat{\varphi}_{\mathfrak{m}} \right>_{-1,1}\lesssim  (\eta^{\varphi}_4)^2.
 \end{align}
 \end{lemma}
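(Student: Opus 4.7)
The plan is to split
\begin{equation*}
\left< \widetilde{\hat{\Lambda}}_{\mathfrak{m}}-\hat{\Lambda},\hat{\varphi}-\hat{\varphi}_{\mathfrak{m}}\right>_{-1,1}
= \left< \widetilde{\hat{\Lambda}}_{\mathfrak{m}},\hat{\varphi}-\hat{\varphi}_{\mathfrak{m}}\right>_{-1,1}
+ \left< \hat{\Lambda},\hat{\varphi}_{\mathfrak{m}}-\hat{\varphi}\right>_{-1,1}
\end{equation*}
and to show that every summand is non-positive except one which matches $(\eta^{\varphi}_4)^2$ up to a shape-regularity constant.

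For the continuous multiplier, testing~\eqref{eq:VI_DiscreteBilinearForm} with $\psi=\hat{\varphi}+v$ for any $v\le 0$ (which lies in $\mathcal{K}(I^n_{\mathfrak{m}}(\varphi^{n-1}_{\mathfrak{m}}))$) gives $\hat{\Lambda}\ge 0$, and testing with $\psi=I^n_{\mathfrak{m}}(\varphi^{n-1}_{\mathfrak{m}})$ combined with this sign property yields the complementarity identity $\left<\hat{\Lambda},I^n_{\mathfrak{m}}(\varphi^{n-1}_{\mathfrak{m}})-\hat{\varphi}\right>_{-1,1}=0$. Since $\hat{\varphi}_{\mathfrak{m}}\in\mathcal{K}^n_{\mathfrak{m}}$ is piecewise (bi-)linear we also have $\hat{\varphi}_{\mathfrak{m}}\le I^n_{\mathfrak{m}}(\varphi^{n-1}_{\mathfrak{m}})$ pointwise, so
\begin{equation*}
\left<\hat{\Lambda},\hat{\varphi}_{\mathfrak{m}}-\hat{\varphi}\right>_{-1,1}
= \left<\hat{\Lambda},\hat{\varphi}_{\mathfrak{m}}-I^n_{\mathfrak{m}}(\varphi^{n-1}_{\mathfrak{m}})\right>_{-1,1} \le 0.
\end{equation*}

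For the discrete part I would expand the node-wise definition~\eqref{eq:QuasiDiscreteConstrainingForce}. At a full-contact node $p\in\mathfrak{N}^{fC}$ we have $\hat{\varphi}_{\mathfrak{m}}=I^n_{\mathfrak{m}}(\varphi^{n-1}_{\mathfrak{m}})$ on $\omega_p$, so $(\hat{\varphi}-\hat{\varphi}_{\mathfrak{m}})\phi_p$ is $\le 0$ on $\omega_p$ and vanishes on $\partial\omega_p$; the sign condition on $\mathcal{R}^{lin}_{\mathfrak{m}}$ defining the full-contact class then yields $\left<\widetilde{\hat{\Lambda}}_{\mathfrak{m}}^p,(\hat{\varphi}-\hat{\varphi}_{\mathfrak{m}})\phi_p\right>_{-1,1}\le 0$. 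At a semi-contact node $p\in\mathfrak{N}^{sC}$ the definition gives
\begin{equation*}
\left<\widetilde{\hat{\Lambda}}_{\mathfrak{m}}^p,(\hat{\varphi}-\hat{\varphi}_{\mathfrak{m}})\phi_p\right>_{-1,1}
= s_p \left(\int_{\omega_p}\phi_p\right) c_p(\hat{\varphi}-\hat{\varphi}_{\mathfrak{m}}),
\end{equation*}
with $s_p\ge 0$ from the discrete variational inequality. Writing $\hat{\varphi}-\hat{\varphi}_{\mathfrak{m}}=(\hat{\varphi}-I^n_{\mathfrak{m}}(\varphi^{n-1}_{\mathfrak{m}}))+(I^n_{\mathfrak{m}}(\varphi^{n-1}_{\mathfrak{m}})-\hat{\varphi}_{\mathfrak{m}})$ inside the non-negative averaging $c_p(\cdot)$, the first summand is $\le 0$ and can be discarded; the second produces
\begin{equation*}
s_p\,\frac{\int_{\omega_p}\phi_p}{\int_{\widetilde{\omega}_p}\phi_p}\int_{\widetilde{\omega}_p}\bigl(I^n_{\mathfrak{m}}(\varphi^{n-1}_{\mathfrak{m}})-\hat{\varphi}_{\mathfrak{m}}\bigr)\phi_p
\lesssim (\eta^{\varphi}_{4,p})^2,
\end{equation*}
where the ratio of the two integrals is uniformly bounded by shape regularity. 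Summing over $\mathfrak{N}^{sC}$ closes the estimate.

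The main obstacle I anticipate is the careful sign and admissibility bookkeeping: one must verify that the right test directions for~\eqref{eq:VI_DiscreteBilinearForm} exist to obtain both $\hat{\Lambda}\ge 0$ and the continuous complementarity identity, that $(\hat{\varphi}-\hat{\varphi}_{\mathfrak{m}})\phi_p$ is a legitimate test function for the full-contact sign condition (here it is crucial that $\phi_p$ vanishes on $\partial\omega_p$), and that after discarding all manifestly non-positive contributions the surviving term matches the definition of $\eta^{\varphi}_{4,p}$ exactly up to the shape-regularity constant coming from $\int_{\omega_p}\phi_p/\int_{\widetilde{\omega}_p}\phi_p$.
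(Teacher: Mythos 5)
Your proof is correct and follows essentially the same approach as the one the paper invokes (the paper refers to~\cite[Lemma~4]{Walloth_2018b}, merely noting the key inclusion $\mathcal{K}^n_{\mathfrak{m}}\subset\mathcal{K}(I_{\mathfrak{m}}^n(\varphi_{\mathfrak{m}}^{n-1}))$, which is exactly the pointwise inequality $\hat{\varphi}_{\mathfrak{m}}\le I^n_{\mathfrak{m}}(\varphi^{n-1}_{\mathfrak{m}})$ your argument relies on). The decomposition into the continuous-multiplier and quasi-discrete parts, the use of the sign and complementarity properties, the node-by-node treatment distinguishing full- and semi-contact, and the final identification with $(\eta^{\varphi}_{4,p})^2$ via the uniformly bounded ratio $\int_{\omega_p}\phi_p/\int_{\widetilde{\omega}_p}\phi_p$ are all exactly the standard steps of this Veeser-type argument.
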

 Due to the discretization by bilinear finite elements on parallelograms and linear finite elements on triangles $\mathcal{K}^n_{\mathfrak{m}}\subset\mathcal{K}(I_{\mathfrak{m}}^n(\varphi_{\mathfrak{m}}^{n-1}))$ holds. Thus, for the proof of Lemma~\ref{LemmaComplementarityResidual} we refer to~\cite[Lemma~4]{Walloth_2018b}.

Theorem~\ref{Theorem:UpperBound} follows from Lemma~\ref{UpperBoundGalerkin} and Lemma~\ref{LemmaComplementarityResidual}. 

\section{Efficiency of the estimator}\label{Sec:Efficiency}
This Section provides the proofs of Theorem~\ref{Theorem:LowerBound} and~\ref{Theorem:LowerBound2}.

\subsection{Local error bound by $\eta^{\varphi}_{1,p},\eta^{\varphi}_{2,p},\eta^{\varphi}_{3,p}$}
We start with $\eta^{\varphi}_{1,p}$ for which we use the properties of the element bubble functions $\Psi_{\mathfrak{e}}:= c\Pi_{p\in\mathfrak{e}}\phi_p$,  for triangles and parallelograms, see~\cite[Chapter 1.3.4]{Verfuerth_2013}:
\begin{itemize}
\item $0\leq \Psi_{\mathfrak{e}}\leq 1$
\item $\|\nabla (\Psi_{\mathfrak{e}} v)\|_{\mathfrak{e}}\lesssim h_{\mathfrak{e}}^{-1}\|v\|_{\mathfrak{e}}$ for all polynomials $v$
\end{itemize}

Similar to~\eqref{eq:Def_alpha_p}, we define for each element $\mathfrak{e}$
\begin{equation}
\alpha_{\mathfrak{e}}:=\mathrm{max}_{x\in\mathfrak{e}}\left\{\frac{G_c}{\epsilon}+(1-\kappa) (\boldsymbol{\sigma}(\boldsymbol{u}^n_{\mathfrak{m}}):\boldsymbol{E}_{\mathrm{lin}}(\boldsymbol{u}^n_{\mathfrak{m}}))\right\}
\end{equation}
We note that $\frac{G_c}{\epsilon}+(1-\kappa) (\boldsymbol{\sigma}(\boldsymbol{u}^n_{\mathfrak{m}}):\boldsymbol{E}_{\mathrm{lin}}(\boldsymbol{u}^n_{\mathfrak{m}}))|_{\mathfrak{e}}$ is constant if $\mathfrak{e}$ is a triangle.
With respect to the energy norm~\eqref{eq:EnergyNorm} this implies for all polynomials $v$
\begin{align}\label{eq:WeightedInverseInequality}
\|\Psi_{\mathfrak{e}}v\|_{\epsilon,\mathfrak{e}}\lesssim (\sqrt{G_c\epsilon} h_{\mathfrak{e}}^{-1}+\alpha^{\frac{1}{2}}_{\mathfrak{e}})\|v\|_{\mathfrak{e}}\lesssim \mathrm{max}\{\frac{\sqrt{G_c\epsilon}}{h_{\mathfrak{e}}},\alpha^{\frac{1}{2}}_{\mathfrak{e}}\}\|v\|_{\mathfrak{e}}.
\end{align}
We recall that for all $p\in\mathfrak{N}$,  $\tilde{\omega}_p$ is the patch around $p$ with respect to a three times uniformly red-refined mesh $\widetilde{\mathfrak{M}}$ with $\tilde{\mathfrak{e}}\in\widetilde{\mathfrak{M}}$ and $h_{\tilde{\mathfrak{e}}}= ch_{\mathfrak{e}}$. We define a linear combination of element bubble functions $\Psi_j$ with respect to all elements $\tilde{\mathfrak{e}}_j\subset \mathfrak{e}$, i.e., $\theta_{\mathfrak{e}} = \sum_{j=1} a_j\Psi_j$. Taking $a_j=0$ for all elements $\tilde{\mathfrak{e}}_j$ containing a node $p\in\mathfrak{N}^{sC}$, we can assert
\begin{align} \label{eq:MeanValueZero1}
\int_{\tilde{\mathfrak{e}}_j}\phi_{q}\theta_{\mathfrak{e}}\phi_{p} = 0\quad\forall q\in\mathfrak{e}.
\end{align}

The other coefficients of the linear combination are chosen such that the bubble function $\theta_{\mathfrak{e}}$ fulfills the following conditions
\begin{align} \label{eq:CondBubbleFunc}
\int_{\mathfrak{e}}\phi_q\phi_r&=\sum_{p\in\mathfrak{N}\backslash\mathfrak{N}^{fC}}\int_{\mathfrak{e}} \phi_q\phi_r\theta_{\mathfrak{e}}\phi_p\quad \forall q,r\in \mathfrak{e}
\end{align}
 As we have more degrees of freedom (coefficients $a_j$) than conditions 
 \begin{itemize}
\item three for~\eqref{eq:MeanValueZero1} on a triangle and four for~\eqref{eq:MeanValueZero1} on a parallelogram
\item six for~\eqref{eq:CondBubbleFunc} on a triangle and ten for~\eqref{eq:CondBubbleFunc} on a parallelogram
 \end{itemize}
 the construction of a suitable bubble function is possible.

In the following, we make use of the fact that $\bar{r}(\hat{\varphi}_{\mathfrak{m}})$ is a linear finite element function such that~\eqref{eq:MeanValueZero1} implies $c_p(r(\hat{\varphi}_{\mathfrak{m}})\theta_{\mathfrak{e}})=0$. 
Further, we exploit~\eqref{eq:CondBubbleFunc} and that $\theta_{\mathfrak{e}}$ vanishes on the edges.
Thus, exploiting~\eqref{eq:WeightedInverseInequality} for $\theta_{\mathfrak{e}}$ instead of $\Psi_{\mathfrak{e}}$,
\begin{align*}
&\|\bar{r}(\hat{\varphi}_{\mathfrak{m}})\|_{\mathfrak{e}}^2\\ 
&\lesssim \sum_{p\in\mathfrak{N}\backslash\mathfrak{N}^{fC}}\int_{\mathfrak{e}}(\bar{r}(\hat{\varphi}_{\mathfrak{m}}))(r(\hat{\varphi}_{\mathfrak{m}}))\theta_{\mathfrak{e}}\phi_p +\sum_{p\in\mathfrak{N}\backslash\mathfrak{N}^{fC}}\int_{\mathfrak{e}}\left(\bar{r}(\hat{\varphi}_{\mathfrak{m}}) - r(\hat{\varphi}_{\mathfrak{m}})\right) \bar{r}(\hat{\varphi}_{\mathfrak{m}})\theta_{\mathfrak{e}}\phi_p \\ 
& = \left<\mathcal{G}_{\mathfrak{m}},\bar{r}(\hat{\varphi}_{\mathfrak{m}})\theta_{\mathfrak{e}}\right> - \sum_{p\in\mathfrak{N}\backslash\mathfrak{N}^{fC}}\int_{\gamma^I_p}G_c\epsilon[\nabla \hat{\varphi}_{\mathfrak{m}}]\bar{r}(\hat{\varphi}_{\mathfrak{m}})\theta_{\mathfrak{e}}\phi_p   \\ 
&\qquad + \sum_{p\in\mathfrak{N}\backslash\mathfrak{N}^{fC}}\left<\hat{\Lambda}_{\mathfrak{m}} ,\phi_p \right>_{-1,1}c_p(\bar{r}(\hat{\varphi}_{\mathfrak{m}})\theta_{\mathfrak{e}}) + \sum_{p\in\mathfrak{N}\backslash\mathfrak{N}^{fC}}\int_{\mathfrak{e}}\left(\bar{r}(\hat{\varphi}_{\mathfrak{m}}) - r(\hat{\varphi}_{\mathfrak{m}})\right) \bar{r}(\hat{\varphi}_{\mathfrak{m}})\theta_{\mathfrak{e}}\phi_p\\
& \lesssim \|\mathcal{G}_{\mathfrak{m}}\|_{\ast,\epsilon,\omega_p} \|\bar{r}(\hat{\varphi}_{\mathfrak{m}})\theta_{\mathfrak{e}}\|_{\epsilon,\mathfrak{e}}  + \|\bar{r}(\hat{\varphi}_{\mathfrak{m}}) - r(\hat{\varphi}_{\mathfrak{m}} \|_{\mathfrak{e}}\|\bar{r}(\hat{\varphi}_{\mathfrak{m}})\|_{\mathfrak{e}} \\ 
&\lesssim \|\mathcal{G}_{\mathfrak{m}}\|_{\ast,\epsilon,\omega_p}\mathrm{max}\{\frac{\sqrt{G_c\epsilon}}{h_{\mathfrak{e}}},\alpha^{\frac{1}{2}}_{\mathfrak{e}}\}\|\bar{r}(\hat{\varphi}_{\mathfrak{m}})\|_{\mathfrak{e}} + \|\bar{r}(\hat{\varphi}_{\mathfrak{m}}) - r(\hat{\varphi}_{\mathfrak{m}} \|_{\mathfrak{e}}\|\bar{r}(\hat{\varphi}_{\mathfrak{m}})\|_{\mathfrak{e}}.
\end{align*}
Dividing by $\mathrm{max}\{\frac{\sqrt{G_c\epsilon}}{h_{\mathfrak{e}}},\alpha^{\frac{1}{2}}_{\mathfrak{e}}\}\|\bar{r}(\hat{\varphi}_{\mathfrak{m}})\|_{\mathfrak{e}}$ and as  $\mathrm{min}\{\frac{h_p}{\sqrt{G_c\epsilon}},\alpha_{\mathfrak{e}}^{-\frac{1}{2}}\} = \left(\mathrm{max}\{\frac{\sqrt{G_c\epsilon}}{h_p},\alpha_{\mathfrak{e}}^{\frac{1}{2}}\}\right)^{-1} $ and $\frac{\alpha_e}{\alpha_p} = C_{\mathfrak{e},p}\neq 0$ is a computable constant,
we arrive at
\begin{align}\label{eq:LowerBound1}
\eta^{\varphi}_{1,p} =\mathrm{min}\{\frac{h_p}{\sqrt{G_c\epsilon}},\alpha_{p}^{-\frac{1}{2}}\} &\|r(\hat{\varphi}_{\mathfrak{m}})\|_{\omega_p} 
\lesssim \|\mathcal{G}_{\mathfrak{m}}\|_{\ast,\epsilon,\omega_p} +\mathrm{osc}_p(r)\ \\ \label{eq:LowerBoundEta1}
&\lesssim \|\hat{\varphi}-\hat{\varphi}_{\mathfrak{m}}\|_{\epsilon,\omega_p} + \|\hat{\Lambda}-\widetilde{\hat{\Lambda}}_{\mathfrak{m}}\|_{\ast,\epsilon,\omega_p} +\mathrm{osc}_p(r).
\end{align}
We note that $C_{\mathfrak{e},p}\in[1,1+\frac{\mathrm{max}_{x\in\mathfrak{e}}(1-\kappa)\sigma(\boldsymbol{u}_{\mathfrak{m}}^n):E_{\mathrm{lin}}(\boldsymbol{u}_{\mathfrak{m}}^n) - \mathrm{min}_{x\in\omega_p}(1-\kappa)\sigma(\boldsymbol{u}_{\mathfrak{m}}^n):E_{\mathrm{lin}}(\boldsymbol{u}_{\mathfrak{m}}^n)}{G_c}]$.

In order to prove the lower bound in terms of $\eta^{\varphi}_{2,p}$, we use the properties of side bubble functions. Following the ansatz given in~\cite{Verfuerth_1998b}, we define side bubble functions with the help of basis functions belonging to a modified element. On the reference element $\hat{\mathfrak{e}}$ the corresponding transformation $\Phi_{\delta}:\mathbb{R}^2\rightarrow\mathbb{R}^2$ maps the coordinates $x, y$ to $x,\delta y$ with $\delta\in(0,1]$.  The basis functions on the transformed reference element are given by $\hat{\phi}_{\delta,p} := \hat{\phi}_{p}\circ\Phi^{-1}_{\delta}$ on $\Phi_{\delta}(\hat{\mathfrak{e}})$ and $\hat{\phi}_{\delta,p} = 0$ on $\hat{\mathfrak{e}}\backslash \Phi_{\delta}(\hat{\mathfrak{e}})$. 
Let $F_{\mathfrak{s}}:\mathfrak{e}\rightarrow\hat{\mathfrak{e}}$ be the linear transformation which maps $\mathfrak{s}$ on $\hat{\mathfrak{s}}$ which is the side with the nodes $p_0= (0,0)$ and $p_1=(1,0)$. The modified side bubble function is defined by $\Psi_{\delta,\hat{\mathfrak{s}}} := \Pi_{p\in\hat{\mathfrak{s}}}\hat{\phi}_{\delta,p}$.
Then it follows from~\cite[Lemma 3.4]{Verfuerth_1998b} together with the transformation rule 
\begin{equation}\label{eq:PropModBubbFunc_Trans}
\begin{split}
\|\Psi_{\delta,\mathfrak{s}}w\|_{\mathfrak{e}}&\lesssim h_{\mathfrak{e}}^{\frac{1}{2}}\sqrt{\delta}\|w\|_{\mathfrak{s}},\\
\|\frac{\partial}{\partial x_i}(\Psi_{\delta,\mathfrak{s}}w)\|_{\mathfrak{e}}&\lesssim h_{\mathfrak{e}}^{-\frac{1}{2}} \sqrt{\delta}\|w\|_{\mathfrak{s}},\quad\forall 1\leq i\leq n-1\\
\|\frac{\partial}{\partial x_n}(\Psi_{\delta,\mathfrak{s}}w)\|_{\mathfrak{e}}&\lesssim h_{\mathfrak{e}}^{-\frac{1}{2}} \frac{1}{\sqrt{\delta}}\|w\|_{\mathfrak{s}}.
\end{split}
\end{equation}
With respect to the $\|\cdot\|_{\epsilon}$ norm, we get
\begin{equation}\label{eq:WeightedInverseInequalitySide}
\|\Psi_{\delta,\mathfrak{s}}w\|_{\epsilon,\omega_{\mathfrak{s}}}\lesssim \left(\sqrt{G_c\epsilon} h_{\mathfrak{s}}^{-\frac{1}{2}}\delta^{-\frac{1}{2}} + \alpha_{\mathfrak{s}}^{\frac{1}{2}} h_{\mathfrak{s}}^{\frac{1}{2}}\delta^{\frac{1}{2}}\right)\|w \|_{\mathfrak{s}}.
\end{equation} 
where $\alpha_{\mathfrak{s}}:=\mathrm{max}_{\tilde{\mathfrak{e}}\subset\omega_{\mathfrak{s}}} \alpha_{\tilde{\mathfrak{e}}}$.
Similar to the proof of the lower bound in terms of $\eta^{\varphi}_{1,p}$, we consider a partition of $\mathfrak{s}$ by three uniform refinements. 
We construct a linear combination $\theta_{\delta,\mathfrak{s}} = \sum_{j} a_j\Psi_{\delta,\tilde{\mathfrak{s}}_j}$ of modified side bubble functions $\Psi_{\delta,\tilde{\mathfrak{s}}_j}$ with respect to all sides $\tilde{\mathfrak{s}}_j$ of the partition of $\mathfrak{s}$ such that $c_p([\nabla\hat{\varphi}_{\mathfrak{m}}]\theta_{\delta,\mathfrak{s}})=0$. We choose $a_j=0$ for all sides $\tilde{\mathfrak{s}}_j$ containing a node $p\in\mathfrak{N}^{sC}$ such that for  triangles and $p\in\tilde{\mathfrak{s}}_j$
\begin{equation} \label{eq:MeanValueZeroTriangle}
\int_{\tilde{\mathfrak{e}}_j}\theta_{ \delta,\mathfrak{s}}\phi_{p} = 0 
\end{equation}
and for parallelograms and $p\in\tilde{\mathfrak{s}}_j$
\begin{equation} \label{eq:MeanValueZeroRectangle}
\int_{\tilde{\mathfrak{e}}_j}\phi_q\theta_{ \delta,\mathfrak{s}}\phi_{p} = 0\quad\forall q\in\mathfrak{e}. 
\end{equation}
The other coefficients of the linear combination are chosen such that the bubble function $\theta_{\delta,\mathfrak{s}}$ fulfills the following property for triangles
\begin{equation} \label{eq:RelNormIntTriangle}
\int_{\mathfrak{s}}1=\sum_{p\in\mathfrak{N}\backslash\mathfrak{N}^{fC}}\int_{\mathfrak{s}} \theta_{\delta,\mathfrak{s}}\phi_p
\end{equation}
and the following property for parallelograms
\begin{equation} \label{eq:RelNormIntRectangle}
\int_{\mathfrak{s}}\phi_q\phi_r=\sum_{p\in\mathfrak{N}\backslash\mathfrak{N}^{fC}}\int_{\mathfrak{s}} \phi_q\phi_r\theta_{\delta,\mathfrak{s}}\phi_p\quad \forall q,r\in \mathfrak{s}.
\end{equation}
Again, as we have more degrees of freedom (coefficients $a_j$) than conditions 
\begin{itemize}
\item one for~\eqref{eq:MeanValueZeroTriangle} on a triangle and two for~\eqref{eq:MeanValueZeroRectangle} on a parallelogram
\item one for~\eqref{eq:RelNormIntTriangle} on a triangle and three for~\eqref{eq:RelNormIntRectangle} on a parallelogram
 \end{itemize}
 the construction of a suitable bubble function is possible.

We set $w:= G_c\epsilon[\nabla \hat{\varphi}_{\mathfrak{m}}]$. 
Thus, we apply~\eqref{eq:PropModBubbFunc_Trans},~\eqref{eq:WeightedInverseInequalitySide}. Together with~\eqref{eq:MeanValueZeroTriangle},~\eqref{eq:MeanValueZeroRectangle} and~\eqref{eq:RelNormIntTriangle},~\eqref{eq:RelNormIntRectangle}, we get
\begin{equation}\label{eq:LowerBoundJumpStep1}
\begin{aligned}
\|G_c\epsilon[\nabla \hat{\varphi}_{\mathfrak{m}}]\|_{\mathfrak{s}}^2 &= \sum_{p\in\mathfrak{N}\backslash\mathfrak{N}^{fC}}\int_{\mathfrak{s}} G_c^2\epsilon^2[\nabla \hat{\varphi}_{\mathfrak{m}}] [\nabla \hat{\varphi}_{\mathfrak{m}}]\theta_{\delta,\mathfrak{s}}\phi_p\\ 
&\lesssim \left<\mathcal{G}_{\mathfrak{m}}, G_c\epsilon[\nabla \hat{\varphi}_{\mathfrak{m}}]\theta_{\delta,\mathfrak{s}}\right> + \sum_{p\in\mathfrak{N}\backslash\mathfrak{N}^{fC}}\int_{\omega_{\mathfrak{s}}}r(\hat{\varphi}_{\mathfrak{m}})G_c\epsilon[\nabla \hat{\varphi}_{\mathfrak{m}}]\theta_{\delta,\mathfrak{s}}\phi_p\\ 
&\qquad\quad+ \sum_{p\in\mathfrak{N}\backslash\mathfrak{N}^{fC}}\left<\widetilde{\hat{\Lambda}}_{\mathfrak{m}} ,\phi_p \right>_{-1,1}c_p(G_c\epsilon [\nabla \hat{\varphi}_{\mathfrak{m}}] \theta_{\delta,\mathfrak{s}})\\ 
&\lesssim \|\mathcal{G}_{\mathfrak{m}}\|_{\ast,\epsilon,\omega_p}\|G_c\epsilon[\nabla \hat{\varphi}_{\mathfrak{m}}]\theta_{\delta,\mathfrak{s}}\|_{\epsilon, \omega_{\mathfrak{s}}} + \|r(\hat{\varphi}_{\mathfrak{m}})\|_{\omega_{\mathfrak{s}}} \|G_c\epsilon[\nabla \hat{\varphi}_{\mathfrak{m}}]\theta_{\delta,\mathfrak{s}}\|_{\omega_{\mathfrak{s}}}\\ 
&\lesssim \|\mathcal{G}_{\mathfrak{m}}\|_{\ast,\epsilon,\omega_p}  \left(\sqrt{G_c\epsilon} h_{\mathfrak{s}}^{-\frac{1}{2}}\delta^{-\frac{1}{2}} + \alpha_{\mathfrak{s}}^{\frac{1}{2}} h_{\mathfrak{s}}^{\frac{1}{2}}\delta^{\frac{1}{2}}\right)\|G_c\epsilon[\nabla \hat{\varphi}_{\mathfrak{m}}]\|_{\mathfrak{s}}\\ 
&\qquad\quad  + \delta^{\frac{1}{2}}h_{\mathfrak{s}}^{\frac{1}{2}}\|r(\hat{\varphi}_{\mathfrak{m}})\|_{\omega_{\mathfrak{s}}}  \|G_c\epsilon[\nabla \hat{\varphi}_{\mathfrak{m}}]\|_{\mathfrak{s}} 
\end{aligned}
\end{equation}
Choosing $\delta := \alpha_{\mathfrak{s}}^{-\frac{1}{2}}\mathrm{min}\{\frac{\sqrt{G_c\epsilon}}{h_{\mathfrak{s}}},\alpha_{\mathfrak{s}}^{\frac{1}{2}}\}<1$, we get the first factor 
\begin{align*}
&\sqrt{G_c\epsilon} h_{\mathfrak{s}}^{-\frac{1}{2}}\alpha_{\mathfrak{s}}^{\frac{1}{4}}\mathrm{min}\{\frac{\sqrt{G_c\epsilon}}{h_{\mathfrak{s}}},\alpha_{\mathfrak{s}}^{\frac{1}{2}}\}^{-\frac{1}{2}} + \alpha_{\mathfrak{s}}^{\frac{1}{2}}h_{\mathfrak{s}}^{\frac{1}{2}}\alpha_{\mathfrak{s}}^{-\frac{1}{4}}\mathrm{min}\{\frac{\sqrt{G_c\epsilon}}{h_{\mathfrak{s}}},\alpha_{\mathfrak{s}}^{\frac{1}{2}}\}^{\frac{1}{2}}\\
 &= (G_c\epsilon)^{\frac{1}{4}}\mathrm{min}\{\alpha_{\mathfrak{s}}^{-\frac{1}{2}}, \frac{h_{\mathfrak{s}}}{\sqrt{\epsilon G_c}}\}^{-\frac{1}{2}} + \alpha^{\frac{1}{2}}_{\mathfrak{s}} (G_c\epsilon)^{\frac{1}{4}}\mathrm{min}\{\alpha_{\mathfrak{s}}^{-\frac{1}{2}}, \frac{h_{\mathfrak{s}}}{\sqrt{\epsilon G_c}}\}^{\frac{1}{2}}\\
 &\leq(G_c\epsilon)^{\frac{1}{4}}\mathrm{min}\{\alpha_{\mathfrak{s}}^{-\frac{1}{2}}, \frac{h_{\mathfrak{s}}}{\sqrt{\epsilon G_c}}\}^{-\frac{1}{2}} + (G_c\epsilon)^{\frac{1}{4}}\mathrm{min}\{\alpha^{-\frac{1}{2}}, \frac{h_{\mathfrak{s}}}{\sqrt{\epsilon G_c}}\}^{-\frac{1}{2}} \underbrace{\alpha_{\mathfrak{s}}^{\frac{1}{2}}\mathrm{min}\{\alpha^{-\frac{1}{2}}, \frac{h_{\mathfrak{s}}}{\sqrt{\epsilon G_c}}\}}_{\leq 1} \\
 &\lesssim (G_c\epsilon)^{\frac{1}{4}}\mathrm{min}\{\alpha_{\mathfrak{s}}^{-\frac{1}{2}}, \frac{h_{\mathfrak{s}}}{\sqrt{\epsilon G_c}}\}^{-\frac{1}{2}}
\end{align*}
and the second factor
\begin{align*}
\alpha_{\mathfrak{s}}^{-\frac{1}{4}}\mathrm{min}\{\frac{\sqrt{G_c\epsilon}}{h_{\mathfrak{s}}},\alpha_{\mathfrak{s}}^{\frac{1}{2}}\}^{\frac{1}{2}} h_{\mathfrak{s}}^{\frac{1}{2}} 
&=  \mathrm{min}\{\frac{h_{\mathfrak{s}}}{\sqrt{G_c\epsilon}}, \alpha^{-\frac{1}{2}}_{\mathfrak{s}}\}^{\frac{1}{2}} (\epsilon G_c)^{\frac{1}{4}}\\
&=  \mathrm{min}\{\frac{h_{\mathfrak{s}}}{\sqrt{G_c\epsilon}}, \alpha^{-\frac{1}{2}}_{\mathfrak{s}}\}(\epsilon G_c)^{\frac{1}{4}}  \mathrm{min}\{\frac{h_{\mathfrak{s}}}{\sqrt{G_c\epsilon}}, \alpha^{-\frac{1}{2}}_{\mathfrak{s}}\}^{-\frac{1}{2}}
\end{align*}
Thus, dividing~\eqref{eq:LowerBoundJumpStep1} by $(G_c\epsilon)^{\frac{1}{4}}\mathrm{min}\{\alpha_{\mathfrak{s}}^{-\frac{1}{2}}, \frac{h_{\mathfrak{s}}}{\sqrt{\epsilon G_c}}\}^{-\frac{1}{2}}\|G_c\epsilon[\nabla \hat{\varphi}_{\mathfrak{m}}]\|_{\mathfrak{s}}$, we get
\begin{align*}
(G_c\epsilon)^{-\frac{1}{4}}\mathrm{min}\{\alpha_{\mathfrak{s}}^{-\frac{1}{2}}, \frac{h_{\mathfrak{s}}}{\sqrt{\epsilon G_c}}\}^{\frac{1}{2}}\|G_c\epsilon[\nabla \hat{\varphi}_{\mathfrak{m}}]\|_{\mathfrak{s}}
\lesssim \|\mathcal{G}_{\mathfrak{m}}\|_{\ast,\epsilon,\omega_p}  + \mathrm{min}\{\frac{h_{\mathfrak{s}}}{\sqrt{G_c\epsilon}}, \alpha^{-\frac{1}{2}}_{\mathfrak{s}}\} \|r(\hat{\varphi}_{\mathfrak{m}})\|_{\omega_{\mathfrak{s}}}.
\end{align*} 
Similar to the proof of the lower bound in terms of $\eta^{\varphi}_{1,p}$, we exploit that $\frac{\alpha_{\mathfrak{s}}}{\alpha_p} = C_{\mathfrak{s},p}\neq 0$ is a computable constant. Further, we make use of~\eqref{eq:LowerBound1} to get the desired lower bound
\begin{align}\label{eq:LowerBoundEta2}
\eta^{\varphi}_{2,p}\lesssim  \|\mathcal{G}_{\mathfrak{m}}\|_{\ast,\epsilon,\omega_p} +\mathrm{osc}_p(r)
\lesssim \|\hat{\varphi}-\hat{\varphi}_{\mathfrak{m}}\|_{\epsilon,\omega_p} + \|\hat{\Lambda}-\widetilde{\hat{\Lambda}}_{\mathfrak{m}}\|_{\ast,\epsilon,\omega_p} +\mathrm{osc}_p(r). 
\end{align}
To derive a local lower bound in terms of $\eta^{\varphi}_{3,p}$, we can proceed in the same way to get
\begin{align}\label{eq:LowerBoundEta3}
\eta^{\varphi}_{3,p} \lesssim \|\hat{\varphi}-\hat{\varphi}_{\mathfrak{m}}\|_{\epsilon,\omega_p} + \|\hat{\Lambda}-\widetilde{\hat{\Lambda}}_{\mathfrak{m}}\|_{\ast,\epsilon,\omega_p}+\mathrm{osc}_p(r).
\end{align}

Theorem~\ref{Theorem:LowerBound} follows from~\eqref{eq:LowerBoundEta1},~\ref{eq:LowerBoundEta2},~\eqref{eq:LowerBoundEta3}.

\subsection{Local error bound in terms of $\eta^{\varphi}_{4,p}$}

In this subsection, we show that also $\eta^{\varphi}_{4,p}$ constitutes a local lower bound.  We proceed almost as in~\cite{Walloth_2018b}.
As the case $\eta^{\varphi}_{4,p}= 0$ is irrelevant, we can assume $s_p>0$ which implies that $p$ is a contact node, i.e., $(I_{\mathfrak{m}}^n(\varphi^{n-1}_{\mathfrak{m}})-\hat{\varphi}_{\mathfrak{m}})(p)=0$.
Choose a node $\hat{q}$ in $\omega_p$ such that $(I_{\mathfrak{m}}^n(\varphi^{n-1}_{\mathfrak{m}})-\hat{\varphi}_{\mathfrak{m}})(\hat{q})\ge (I_{\mathfrak{m}}^n(\varphi^{n-1}_{\mathfrak{m}})-\hat{\varphi}_{\mathfrak{m}})(q)$ for all $q\in\omega_p$. We denote the unit vector pointing from $p$ to $\hat{q}$ by $\boldsymbol{\tau}$.
We denote the element to which $p$ and $\hat{q}$ belong  by $\mathfrak{e}_1$ and the element in $\omega_p$ which is intersected by $-\boldsymbol{\tau}$, starting in $p$, is denoted by $\mathfrak{e}_N$. The elements between $\mathfrak{e}_1$ and $\mathfrak{e}_N$ are denoted in order by $\mathfrak{e}_i$, $i=2,\ldots, N-1$.
For the ease of presentation, we set $v_{\mathfrak{m}}:= (I_{\mathfrak{m}}^n(\varphi^{n-1}_{\mathfrak{m}})-\hat{\varphi}_{\mathfrak{m}})$ in the following.
We use Taylor expansion around $v_{\mathfrak{m}}(p)=0$ and the mean value form of the remainder, i.e., there exists a $\zeta_{\mathfrak{e}_1}$ such that 
\begin{equation}\label{eq:TaylorExpansion}
\begin{split}
v_{\mathfrak{m}}(\hat{q}) &= \underbrace{v_{\mathfrak{m}}(p)}_{=0} + \nabla|_{\mathfrak{e}_1}(v_{\mathfrak{m}}(\zeta_{\mathfrak{e}_1}))\cdot(\hat{q}-p). 
\end{split}
\end{equation}
As by definition $v_{\mathfrak{m}}(\hat{q})\ge 0$, it follows that  $ \nabla|_{\mathfrak{e}_1}(v_{\mathfrak{m}}(\zeta_{\mathfrak{e}_1}))\cdot\boldsymbol{\tau}\ge 0 $.

Let $\tilde{q}\in\mathfrak{e}_N$ be the point of intersection of $-\tensor{\tau}$ and $\partial \omega_p$. As $v_{\mathfrak{m}}(q)\ge 0$ for all $q\in\omega_p$, we can conclude, as in~\eqref{eq:TaylorExpansion}, that there exists a $\zeta_{\mathfrak{e}_N}$ such that $\nabla|_{\mathfrak{e}_N}(v_{\mathfrak{m}}(\zeta_{\mathfrak{e}_N}))\cdot(-\boldsymbol{\tau})\ge 0$.
Thus, we can add $\nabla|_{\mathfrak{e}_N}(v_{\mathfrak{m}}(\zeta_{\mathfrak{e}_N}))\cdot(-\boldsymbol{\tau})$ to~\eqref{eq:TaylorExpansion}
\begin{equation*}
v_{\mathfrak{m}}(\hat{q})  \lesssim h_p(\nabla|_{\mathfrak{e}_1}(v_{\mathfrak{m}}(\zeta_{\mathfrak{e}_1})) -  \nabla|_{\mathfrak{e}_N}(v_{\mathfrak{m}}(\zeta_{\mathfrak{e}_N})))\cdot \boldsymbol{\tau}
 \lesssim h_p |\nabla|_{\mathfrak{e}_1} v_{\mathfrak{m}}(\zeta_{\mathfrak{e}_1}) -\nabla|_{\mathfrak{e}_{N}}v_{\mathfrak{m}}(\zeta_{\mathfrak{e}_{N}})|
\end{equation*}
Next, we add and subtract $\nabla|_{\mathfrak{e}_i} v_{\mathfrak{m}}(\zeta_{\mathfrak{e}_i}) $ for $i=2,\ldots, N-1$ where the choice of $\zeta_{\mathfrak{e}_i}\in\mathfrak{e}_i$ for $i\neq \{1,N\}$ is arbitrary and can be set to the midpoint of the elements. We define $\overline{\nabla|_{\mathfrak{e}_i} v_{\mathfrak{m}}}:= \nabla|_{\mathfrak{e}_i} v_{\mathfrak{m}}(\zeta_{\mathfrak{e}_i}) $ as a piecewise constant approximation of  $\nabla|_{\mathfrak{e}_i} v_{\mathfrak{m}}$.
Thus, we get from the previous inequality
\begin{align*}
v_{\mathfrak{m}}(\hat{q}) 
& \lesssim h_p h_p^{-\frac{1}{2}}\|[\overline{\nabla (v_{\mathfrak{m}})}]\|_{\gamma^I_p}.
\end{align*}
Further, we exploit  
\begin{align*}
\left<\hat{\Lambda}_{\mathfrak{m}} ,\phi_p \right>:=\int_{\gamma_{p}^I}G_c\epsilon[\nabla (\hat{\varphi}_{\mathfrak{m}})]\phi_p -\int_{\gamma_{p}^{\Gamma}}G_c\epsilon\nabla (\hat{\varphi}_{\mathfrak{m}})\phi_p   + \int_{\omega_p}r(\hat{\varphi}_{\mathfrak{m}})\phi_p. 
\end{align*}
Putting together and assuming $\frac{h_p}{\sqrt{G_c\epsilon}} \leq\alpha_p^{-\frac{1}{2}}$
\begin{align*}
&(\eta^{\varphi}_{4,p})^2= \left<\hat{\Lambda}_{\mathfrak{m}} ,\phi_p \right>c_p(I_{\mathfrak{m}}^n(\varphi^{n-1}_{\mathfrak{m}})-\hat{\varphi}_{\mathfrak{m}})\\
 &\leq  h_p^{2}(h_p h_p^{-\frac{1}{2}}\|[\overline{\nabla v_{\mathfrak{m}}}]\|_{\gamma^I_p})h_p^{-2}\left(\|G_c\epsilon[\nabla \hat{\varphi}_{\mathfrak{m}}]\|_{\gamma^I_p} \|\phi_p\|_{\gamma^I_p}+ \|G_c\epsilon\nabla \hat{\varphi}_{\mathfrak{m}}\|_{\gamma^{\Gamma}_p} \|\phi_p\|_{\gamma^{\Gamma}_p}\right.\\
  &\left.\qquad+\|r(\hat{\varphi}_{\mathfrak{m}})\|_{\omega_p}\|\phi_p\|_{\omega_p} \right)\\
&\leq (h_p^{\frac{1}{2}}G_c^{-1}\epsilon^{-1}\|G_c\epsilon[\overline{\nabla v_{\mathfrak{m}}}]\|_{\gamma^I_p})\left(\|G_c\epsilon[\nabla \hat{\varphi}_{\mathfrak{m}}]\|_{\gamma^I_p} h_p^{\frac{1}{2}}+\|G_c\epsilon\nabla \hat{\varphi}_{\mathfrak{m}}\|_{\gamma^\Gamma_p} h_p^{\frac{1}{2}}+  \|r(\hat{\varphi}_{\mathfrak{m}})\|_{\omega_p}h_p^{1} \right)\\
&\leq \biggl(\frac{h_p^{\frac{1}{2}}}{\sqrt{G_c\epsilon}}\|G_c\epsilon[\overline{\nabla v_{\mathfrak{m}}}]\|_{\gamma^I_p}\biggr)\\
&\qquad \biggl(\|G_c\epsilon[\nabla \hat{\varphi}_{\mathfrak{m}}]\|_{\gamma^I_p} \frac{h_p^{\frac{1}{2}}}{\sqrt{G_c\epsilon}}+\|G_c\epsilon\nabla \hat{\varphi}_{\mathfrak{m}}\|_{\gamma^\Gamma_p} \frac{h_p^{\frac{1}{2}}}{\sqrt{G_c\epsilon}}+  \|r(\hat{\varphi}_{\mathfrak{m}})\|_{\omega_p}\frac{h_p}{\sqrt{G_c\epsilon}} \biggr)\\
&\lesssim\frac{h_p}{G_c\epsilon}\|G_c\epsilon[\nabla \hat{\varphi}_{\mathfrak{m}}]\|^2_{\gamma^I_p}  +\frac{h_p}{G_c\epsilon}\|G_c\epsilon\nabla \hat{\varphi}_{\mathfrak{m}}\|^2_{\gamma^\Gamma_p}  + \frac{h_p^2}{G_c\epsilon}\|r(\hat{\varphi}_{\mathfrak{m}})\|^2_{\omega_p}  + \frac{h_p}{G_c\epsilon}\|G_c\epsilon[\overline{\nabla v_{\mathfrak{m}}}]\|^2_{\gamma^I_p} \\
&\lesssim(\eta^{\varphi}_{1,p})^2 + (\eta^{\varphi}_{2,p})^2 + (\eta^{\varphi}_{3,p})^2 + \frac{h_p}{\sqrt{G_c\epsilon}}\frac{1}{\sqrt{G_c\epsilon}}\|G_c\epsilon[\overline{\nabla v_{\mathfrak{m}}}]\|^2_{\gamma^I_p}.
\end{align*}
Thus, together with~\eqref{eq:LowerBoundEta1},~\eqref{eq:LowerBoundEta2},~\eqref{eq:LowerBoundEta3}
\begin{equation}\label{eq:ProofEta4Robust}
(\eta^{\varphi}_{4,p})^2 \lesssim  \|\hat{\varphi}-\hat{\varphi}_{\mathfrak{m}}\|_{\epsilon,\omega_p} + \|\hat{\Lambda}-\widetilde{\hat{\Lambda}}_{\mathfrak{m}}\|_{\ast,\epsilon,\omega_p} + \mathrm{osc}_p(r)+ \left(\frac{h_p}{\sqrt{G_c\epsilon}}\frac{1}{\sqrt{G_c\epsilon}}\|G_c\epsilon[\overline{\nabla v_{\mathfrak{m}}}]\|^2_{\gamma^I_p}\right)^{\frac{1}{2}}.
\end{equation}
In the remaining case $
\alpha_p^{-\frac{1}{2}}<\frac{h_p}{\sqrt{G_c\epsilon}}$, i.e, in
$\eta^{\varphi}_{2,p},\eta^{\varphi}_{3,p}$ it is $ \mathrm{min}\{\frac{h_p}{\sqrt{G_c\epsilon}},\alpha_p^{-\frac{1}{2}}\}^{\frac{1}{2}}(G_c\epsilon)^{-\frac{1}{4}} = \alpha_p^{-\frac{1}{4}}(G_c\epsilon)^{-\frac{1}{4}} $ and  in $\eta^{\varphi}_{1,p}$ it is $\mathrm{min}\{\frac{h_p}{\sqrt{G_c\epsilon}},\alpha_p^{-\frac{1}{2}}\} =\alpha_p^{-\frac{1}{2}} $. We exploit $h_p\lesssim 1$ and proceed as before 
\begin{align*}
(\eta^{\varphi}_{4,p})^2 &= \left<\hat{\Lambda}_{\mathfrak{m}} ,\phi_p \right>c_p(I_{\mathfrak{m}}^n(\varphi^{n-1}_{\mathfrak{m}})-\hat{\varphi}_{\mathfrak{m}})\\
&\leq \left(\frac{h_p^{\frac{1}{2}}}{\sqrt{G_c\epsilon}}\|G_c\epsilon[\overline{\nabla v_{\mathfrak{m}}}]\|_{\gamma^I_p} \right)\\
&\qquad \left(\|G_c\epsilon[\nabla \hat{\varphi}_{\mathfrak{m}}]\|_{\gamma^I_p} \frac{h_p^{\frac{1}{2}}}{\sqrt{G_c\epsilon}}+\|G_c\epsilon\nabla \hat{\varphi}_{\mathfrak{m}}\|_{\gamma^\Gamma_p} \frac{h_p^{\frac{1}{2}}}{\sqrt{G_c\epsilon}}+  \|r(\hat{\varphi}_{\mathfrak{m}})\|_{\omega_p}\frac{h_p}{\sqrt{G_c\epsilon}} \right)\\
&\leq \alpha_p^{-\frac{1}{4}}(G_c\epsilon)^{-\frac{1}{4}}\|G_c\epsilon[\nabla \hat{\varphi}_{\mathfrak{m}}]\|_{\gamma^I_p}(\alpha_p^{\frac{1}{4}}G_c^{-\frac{3}{4}}\epsilon^{-\frac{3}{4}}\|G_c\epsilon[\overline{\nabla v_{\mathfrak{m}}}]\|_{\gamma^I_p} ) \\
&\qquad + \alpha_p^{-\frac{1}{4}}(G_c\epsilon)^{-\frac{1}{4}}\|G_c\epsilon\nabla \hat{\varphi}_{\mathfrak{m}}\|_{\gamma^\Gamma_p}(\alpha_p^{\frac{1}{4}}G_c^{-\frac{3}{4}}\epsilon^{-\frac{3}{4}}\|G_c\epsilon[\overline{\nabla v_{\mathfrak{m}}}]\|_{\gamma^I_p})\\
&\qquad + \alpha_p^{-\frac{1}{2}}\|r(\hat{\varphi}_{\mathfrak{m}})\|_{\omega_p}(\alpha_p^{\frac{1}{2}}G_c^{-1}\epsilon^{-1}\|G_c\epsilon[\overline{\nabla v_{\mathfrak{m}}}]\|_{\gamma^I_p})\\
&\lesssim (\eta^{\varphi}_{1,p})^2 + (\eta^{\varphi}_{2,p})^2 + (\eta^{\varphi}_{3,p})^2 +\mathrm{max} \{\alpha_p(G_c\epsilon)^{-2}, \alpha_p^{\frac{1}{2}} (G_c\epsilon)^{-\frac{3}{2}}\}\|G_c\epsilon[\overline{\nabla v_{\mathfrak{m}}}]\|^2_{\gamma^I_p}.
\end{align*}
Thus, together
with~\eqref{eq:LowerBoundEta1},~\eqref{eq:LowerBoundEta2},~\eqref{eq:LowerBoundEta3},
we get
\begin{equation*}
\eta^{\varphi}_{4,p} \lesssim  \|\hat{\varphi}-\hat{\varphi}_{\mathfrak{m}}\|_{\epsilon,\omega_p} + \|\hat{\Lambda}-\widetilde{\hat{\Lambda}}_{\mathfrak{m}}\|_{\ast,\epsilon,\omega_p} +\mathrm{osc}_p(r)+ \mathrm{max} \{\alpha_p(G_c\epsilon)^{-2}, \alpha_p^{\frac{1}{2}} (G_c\epsilon)^{-\frac{3}{2}}\}\|G_c\epsilon[\overline{\nabla v_{\mathfrak{m}}}]\|^2_{\gamma^I_p}. 
\end{equation*}
This together with~\eqref{eq:LowerBound0}
and~\eqref{eq:ProofEta4Robust} yields Theorem~\ref{Theorem:LowerBound2}.

\section{Residual a posteriori estimator for the equation}\label{Sec:EstimatorEqu}

The residual a posteriori estimator, we give in this section is derived for the solution of the following equation
\begin{problem}\label{AuxProbEquCont}
Let $\varphi_{m}^{n-1}$ be given, then find $\hat{\boldsymbol{u}}\in \boldsymbol{\mathcal{H}}^n_D$ such that 
\begin{equation}
\left<g(\varphi_{\mathfrak{m}}^{n-1})\boldsymbol{\sigma}(\hat{\boldsymbol{u}}),\boldsymbol{E}_{\mathrm{lin}}(\boldsymbol{w}) \right> = 0 \qquad\forall \boldsymbol{w}\in\boldsymbol{\mathcal{H}}_0.
\end{equation}
\end{problem}
As discrete approximation of Problem~\ref{AuxProbEquCont}, we consider
\begin{problem}\label{AuxProbEquDisc}
Let $\varphi_{m}^{n-1}$ be given, then find $\hat{\boldsymbol{u}}_{\mathfrak{m}}\in \boldsymbol{\mathcal{H}}^n_{\mathfrak{m},D}$ such that 
\begin{equation}
\left<g(\varphi_{\mathfrak{m}}^{n-1})\boldsymbol{\sigma}(\hat{\boldsymbol{u}}_{\mathfrak{m}}),\boldsymbol{E}_{\mathrm{lin}}(\boldsymbol{w}_{\mathfrak{m}}) \right> = 0 \qquad\forall \boldsymbol{w}_{\mathfrak{m}}\in\boldsymbol{\mathcal{H}}_{\mathfrak{m},0}.
\end{equation}
\end{problem}
We note that the discrete solution $\hat{\boldsymbol{u}}_{\mathfrak{m}}$ of Problem~\ref{AuxProbEquDisc} equals the discrete solution $\boldsymbol{u}_{\mathfrak{m}}^n$ of Problem~\ref{DiscreteFormulation} in time step $n$. Further as $\varphi_{m}^{n-1}$ is an approximation of $\varphi^{n-1}$, the solution $\hat{\boldsymbol{u}}$ of Problem~\ref{AuxProbEquCont} is an approximation of $\boldsymbol{u}^n$ in Problem~\ref{WeakFormulation}.

Following~\cite{Verfuerth_1998a}, we derive the residual a posteriori estimator.
We define the residual
\begin{align}\nonumber
\left<\boldsymbol{\mathcal{R}}_{\mathfrak{m}}(\hat{\boldsymbol{u}}_{\mathfrak{m}}), \boldsymbol{w}\right>_{-1,1}:= &\;0-\left<g(\varphi_{\mathfrak{m}}^{n-1})\boldsymbol{\sigma}(\hat{\boldsymbol{u}}_{\mathfrak{m}}),\boldsymbol{E}_{\mathrm{lin}}(\boldsymbol{w}) \right>\\ \label{eq:ResLinElast}
=&\left<g(\varphi_{\mathfrak{m}}^{n-1})\boldsymbol{\sigma}(\hat{\boldsymbol{u}}_{\mathfrak{m}}),\boldsymbol{E}_{\mathrm{lin}}(\boldsymbol{w}) \right>-\left<g(\varphi_{\mathfrak{m}}^{n-1})\boldsymbol{\sigma}(\hat{\boldsymbol{u}}_{\mathfrak{m}}),\boldsymbol{E}_{\mathrm{lin}}(\boldsymbol{w}) \right>.
\end{align}
Let $c^{\ast}$ be a constant depending on the largest eigenvalue of Hooke's tensor $C$ and $c_{\ast}$ be a constant depending on the smallest eigenvalues of Hooke's tensor. Further, we note that $\kappa\leq g(\varphi^{n-1}_{\mathfrak{m}})\leq 1$. 
We conclude from~\eqref{eq:ResLinElast}
\begin{align}\label{AbstractLowerBoundU}
\|\boldsymbol{\mathcal{R}}_{\mathfrak{m}}(\hat{\boldsymbol{u}}_{\mathfrak{m}})\|_{-1}= 
\mathrm{sup}_{w\in H^1}\frac{\left<g(\varphi_{\mathfrak{m}}^{n-1})\boldsymbol{\sigma}(\hat{\boldsymbol{u}}- \hat{\boldsymbol{u}}_{\mathfrak{m}}),\boldsymbol{E}_{\mathrm{lin}}(\boldsymbol{w}) \right>}{\|\boldsymbol{w}\|_1}\leq c^{\ast}\|\hat{\boldsymbol{u}}-\hat{\boldsymbol{u}}_{\mathfrak{m}}\|_{1}
\end{align}
and
\begin{align*}
\|\boldsymbol{\mathcal{R}}_{\mathfrak{m}}(\hat{\boldsymbol{u}}_{\mathfrak{m}})\|_{-1}\ge \frac{\left<g(\varphi_{\mathfrak{m}}^{n-1})\boldsymbol{\sigma}(\hat{\boldsymbol{u}}- \hat{\boldsymbol{u}}_{\mathfrak{m}}),\boldsymbol{E}_{\mathrm{lin}}(\hat{\boldsymbol{u}} -\hat{\boldsymbol{u}}_{\mathfrak{m}}) \right>}{\|\hat{\boldsymbol{u}} -\hat{\boldsymbol{u}}_{\mathfrak{m}}\|_1}\ge \kappa c_{\ast}\|\hat{\boldsymbol{u}} -\hat{\boldsymbol{u}}_{\mathfrak{m}}\|_1
\end{align*}

In order to derive the upper bound, we reformulate the residual by means of piecewise integration by parts
\begin{equation}\label{eq:LinResU_part}
\begin{aligned}
\bigl<&\boldsymbol{\mathcal{R}}_{\mathfrak{m}}(\hat{\boldsymbol{u}}_{\mathfrak{m}}), \boldsymbol{w}\bigr>_{-1,1} = -\int_{\Omega} g(\varphi_{\mathfrak{m}}^{n-1})\boldsymbol{\sigma}(\hat{\boldsymbol{u}}_{\mathfrak{m}}):\boldsymbol{E}_{\mathrm{lin}}(\boldsymbol{w})\\
&= \sum_{\mathfrak{e}\in\mathfrak{M}} \int_{\mathfrak{e}}\mathrm{div}(g(\varphi^{n-1}_{\mathfrak{m}})\boldsymbol{\sigma}(\hat{\boldsymbol{u}}_{\mathfrak{m}}))\cdot\boldsymbol{w} - \int_{\partial\mathfrak{e}}\boldsymbol{n}_{\mathfrak{e}} g(\varphi_{\mathfrak{m}}^{n-1})\boldsymbol{\sigma}(\hat{\boldsymbol{u}}_{\mathfrak{m}})\cdot\boldsymbol{w}\\
&= \sum_{\mathfrak{e}\in\mathfrak{M}} \int_{\mathfrak{e}}\left(\nabla g(\varphi^{n-1}_{\mathfrak{m}})\cdot\boldsymbol{\sigma}(\hat{\boldsymbol{u}}_{\mathfrak{m}}) + \mathrm{div}\boldsymbol{\sigma}(\hat{\boldsymbol{u}}_{\mathfrak{m}}) g(\varphi^{n-1}_{\mathfrak{m}})  \right)\cdot\boldsymbol{w} - \int_{\partial\mathfrak{e}}\boldsymbol{n}_{\mathfrak{e}} g(\varphi_{\mathfrak{m}}^{n-1})\boldsymbol{\sigma}(\hat{\boldsymbol{u}}_{\mathfrak{m}})\cdot\boldsymbol{w}.
\end{aligned}
\end{equation}
We abbreviate the interior residual by $\boldsymbol{r}(\hat{\boldsymbol{u}}_{\mathfrak{m}}):=\nabla g(\varphi^{n-1}_{\mathfrak{m}})\cdot\boldsymbol{\sigma}(\hat{\boldsymbol{u}}_{\mathfrak{m}}) + \mathrm{div}\boldsymbol{\sigma}(\hat{\boldsymbol{u}}_{\mathfrak{m}}) g(\varphi^{n-1}_{\mathfrak{m}})$, the jump terms between two neighboring elements $\mathfrak{e},\tilde{\mathfrak{e}}$ 
\begin{align*} 
\boldsymbol{J}(\hat{\boldsymbol{u}}_{\mathfrak{m}}):= \left((g(\varphi^{n-1}_{\mathfrak{m}})\boldsymbol{\sigma}(\hat{\boldsymbol{u}}_{\mathfrak{m}}))|_{\mathfrak{e}}\boldsymbol{n}_{\mathfrak{e}}-(g(\varphi^{n-1}_{\mathfrak{m}})\boldsymbol{\sigma}(\hat{\boldsymbol{u}}_{\mathfrak{m}}))|_{\tilde{\mathfrak{e}}}\boldsymbol{n}_{\tilde{\mathfrak{e}}}\right) 
\end{align*} 
and the jump terms at the Neumann boundary by  $\boldsymbol{J}^N(\hat{\boldsymbol{u}}_{\mathfrak{m}}):= \left((g(\varphi^{n-1}_{\mathfrak{m}})\boldsymbol{\sigma}(\hat{\boldsymbol{u}}_{\mathfrak{m}}))|_{\mathfrak{e}}\boldsymbol{n}_{\mathfrak{e}}\right) $.
In order to define the quasi-interpolation operator, we need to subclassify the boundary nodes in Dirichlet boundary nodes $\mathfrak{N}_{\mathfrak{m}}^D$ and Neumann boundary nodes $\mathfrak{N}_{\mathfrak{m}}^N$ with respect to the displacements. Thereby the quasi-interpolation operator is $\mathcal{I}_{\mathfrak{m}}(\boldsymbol{v}):= \sum_{p\in\mathfrak{N}_{\mathfrak{m}}} c_p(\boldsymbol{v})\phi_p\in\boldsymbol{\mathcal{H}}_{\mathfrak{m},0}$ for all $\boldsymbol{v}\in\boldsymbol{\mathcal{H}}_0$ with $c_p(v_i):=\frac{\int_{\omega_p}v_i\phi_p}{\int_{\omega_p}\phi_p}$  for $p\in \mathfrak{N}_{\mathfrak{m}}^I$ and $c_p(v_i):=0$ for $p\in\mathfrak{N}_{\mathfrak{m}}^D$, see, e.g.,~\cite[Section 3.5.]{Verfuerth_2013}.  Further, we denote $\gamma_p^N:= \Gamma^N\cap \omega_p$. We add $\left<\boldsymbol{\mathcal{R}}_{\mathfrak{m}}(\hat{\boldsymbol{u}}_{\mathfrak{m}}), \mathcal{I}_{\mathfrak{m}}(\boldsymbol{w})\right>_{-1,1}=0$ to~\eqref{eq:LinResU_part}, apply Cauchy-Schwarz and the $L^2$-approximation property of the quasi-interpolation operator 
\begin{align*}
\Bigl<&\boldsymbol{\mathcal{R}}_{\mathfrak{m}}(\hat{\boldsymbol{u}}_{\mathfrak{m}}), \boldsymbol{w}\Bigr>_{-1,1} =\Bigl< \boldsymbol{\mathcal{R}}_{\mathfrak{m}}(\hat{\boldsymbol{u}}_{\mathfrak{m}}),\sum_{p\in\mathfrak{N}_{\mathfrak{m}}} (\boldsymbol{w}- c_p(\boldsymbol{w}))\phi_p \Bigr>_{-1,1}\\
&= \sum_{p\in\mathfrak{N}_{\mathfrak{m}}} \int_{\omega_p}\boldsymbol{r}(\hat{\boldsymbol{u}}_{\mathfrak{m}})\cdot\left(\boldsymbol{w}- c_p(\boldsymbol{w})\right)\phi_p\\
&\quad- \int_{\gamma^N_p} \boldsymbol{J}^I(\hat{\boldsymbol{u}}_{\mathfrak{m}})\cdot(\boldsymbol{w} - c_p(\boldsymbol{w}))\phi_p
- \int_{\gamma^I_p} \boldsymbol{J}^N(\hat{\boldsymbol{u}}_{\mathfrak{m}})\cdot(\boldsymbol{w} - c_p(\boldsymbol{w}))\phi_p\\
&\leq \sum_{p\in\mathfrak{N}_{\mathfrak{m}}}\|\boldsymbol{r}(\hat{\boldsymbol{u}}_{\mathfrak{m}}) \|_{\omega_p}h_p\|\boldsymbol{w}\|_{1,\omega_p} + \|\boldsymbol{J}^N(\hat{\boldsymbol{u}}_{\mathfrak{m}})\|_{\gamma^N_p} h_p^{\frac{1}{2}}\|\boldsymbol{w}\|_{1,\omega_p} + \|\boldsymbol{J}^I(\hat{\boldsymbol{u}}_{\mathfrak{m}})\|_{\gamma^I_p} h_p^{\frac{1}{2}}\|\boldsymbol{w}\|_{1,\omega_p}\\
&\lesssim \left(\sum_{p\in\mathfrak{N}_{\mathfrak{m}}}\|\boldsymbol{r}(\hat{\boldsymbol{u}}_{\mathfrak{m}}) \|^2_{\omega_p}h^2_p + \|\boldsymbol{J}^I(\hat{\boldsymbol{u}}_{\mathfrak{m}})\|_{\gamma^I_p} h_p+ \|\boldsymbol{J}^N(\hat{\boldsymbol{u}}_{\mathfrak{m}})\|_{\gamma^N_p} h_p\right)^{\frac{1}{2}}\left( \sum_{p\in\mathfrak{N}_{\mathfrak{m}}}\|\boldsymbol{w}\|^2_{1,\omega_p} \right)^{\frac{1}{2}}
\end{align*}

Thus, we get the upper bound 
\begin{equation*}
\|\hat{\boldsymbol{u}} - \hat{\boldsymbol{u}}_{\mathfrak{m}}\|_1\lesssim \|\boldsymbol{\mathcal{R}}_{\mathfrak{m}}(\hat{\boldsymbol{u}}_{\mathfrak{m}})\|_{-1}\lesssim \left(\sum_{p\in\mathfrak{N}_{\mathfrak{m}}} (\eta^u_{1,p})^2 + (\eta^u_{2,p})^2 + (\eta^u_{3,p})^2\right)^{\frac{1}{2}} 
\end{equation*}
for the error estimator contributions 
\begin{align*}
\eta^{u}_{1,p}:= &h_p\|\boldsymbol{r}(\hat{\boldsymbol{u}}_{\mathfrak{m}}) \|_{\omega_p}\qquad
\eta^u_{2,p}:= h_p^{\frac{1}{2}} \|\boldsymbol{J}^I(\hat{\boldsymbol{u}}_{\mathfrak{m}})\|_{\gamma^I_p}\qquad
\eta^u_{3,p}:= h_p^{\frac{1}{2}} \|\boldsymbol{J}^N(\hat{\boldsymbol{u}}_{\mathfrak{m}})\|_{\gamma^N_p}. 
\end{align*}

In order to prove the lower bound, we use the bubble functions on elements $\Psi_{\mathfrak{e}}:=\Pi_{p\in\mathfrak{e}}\phi_p$ and on sides $\Psi_{\mathfrak{s}}:=\Pi_{p\in\mathfrak{s}}\phi_p$ with the properties
\begin{equation*}
\begin{aligned}
\|\rho\|^2_{\mathfrak{e}}&\lesssim\int_{\mathfrak{e}}\Psi_{\mathfrak{e}}\rho^2\lesssim
\|\rho\|^2_{\mathfrak{e}}, &\qquad
\|\Psi_{\mathfrak{e}}\rho\|_{1,\mathfrak{e}}&\lesssim h_{\mathfrak{e}}^{-1}\|\rho\|_{\mathfrak{e}}\\
\|\rho\|_{\mathfrak{s}}^2&\lesssim\int_{\mathfrak{s}}\Psi_{\mathfrak{s}}\rho^2\lesssim\|\rho\|^2_{\mathfrak{s}},&
\|\Psi_{\mathfrak{s}}\rho\|_{1,\omega_{\mathfrak{s}}}&\lesssim h_{\mathfrak{s}}^{-\frac{1}{2}}\|\rho\|_{\mathfrak{s}}\\
\|\Psi_{\mathfrak{s}}\rho\|_{\omega_{\mathfrak{s}}}&\lesssim h_{\mathfrak{s}}^{\frac{1}{2}}\|\rho\|_{\mathfrak{s}}.
\end{aligned}
\end{equation*}
for all polynomials $\rho$ defined on $\mathfrak{e}$ and $\mathfrak{s}$.
Thus, we get
\begin{align*}
\|\boldsymbol{r}(\hat{\boldsymbol{u}}_{\mathfrak{m}})\|^2_{\mathfrak{e}}&\lesssim \int_{\mathfrak{e}} \boldsymbol{r}(\hat{\boldsymbol{u}}_{\mathfrak{m}})\boldsymbol{r}(\hat{\boldsymbol{u}}_{\mathfrak{m}})\boldsymbol{\Psi}_{\mathfrak{e}}\\
&=\left<\boldsymbol{\mathcal{R}}_{\mathfrak{m}}(\hat{\boldsymbol{u}}_{\mathfrak{m}}), \boldsymbol{r}(\hat{\boldsymbol{u}}_{\mathfrak{m}})\boldsymbol{\Psi}_{\mathfrak{e}}\right>_{-1,1}\\
&\leq \|\boldsymbol{\mathcal{R}}_{\mathfrak{m}}(\hat{\boldsymbol{u}}_{\mathfrak{m}})\|_{-1,\mathfrak{e}}\|\boldsymbol{r}(\hat{\boldsymbol{u}}_{\mathfrak{m}})\boldsymbol{\Psi}_{\mathfrak{e}}\|_{1}\\
&\lesssim \|\boldsymbol{\mathcal{R}}_{\mathfrak{m}}(\hat{\boldsymbol{u}}_{\mathfrak{m}})\|_{-1,\mathfrak{e}}h^{-1}_{\mathfrak{e}}\|\boldsymbol{r}(\hat{\boldsymbol{u}}_{\mathfrak{m}})\|_{\mathfrak{e}}.
\end{align*}
Dividing by $h^{-1}_{\mathfrak{e}}\|\boldsymbol{r}(\hat{\boldsymbol{u}}_{\mathfrak{m}})\|_{\mathfrak{e}}$ and exploiting \eqref{AbstractLowerBoundU} we arrive at
\begin{equation*}
\|\boldsymbol{r}(\hat{\boldsymbol{u}}_{\mathfrak{m}})\|\lesssim c^{\ast}\|\hat{\boldsymbol{u}} - \hat{\boldsymbol{u}}_{\mathfrak{m}}\|_{1,\mathfrak{e}}.
\end{equation*}
Further, we have
\begin{align*}
\|\boldsymbol{J}^I(\hat{\boldsymbol{u}}_{\mathfrak{m}})\|^2_{\mathfrak{s}}&\lesssim \int_{\mathfrak{s}} \boldsymbol{J}^I(\hat{\boldsymbol{u}}_{\mathfrak{m}})\boldsymbol{J}^I(\hat{\boldsymbol{u}}_{\mathfrak{m}})\boldsymbol{\Psi}_{\mathfrak{s}}\\
&=\left<\boldsymbol{\mathcal{R}}_{\mathfrak{m}}(\hat{\boldsymbol{u}}_{\mathfrak{m}}), \boldsymbol{J}^I(\hat{\boldsymbol{u}}_{\mathfrak{m}})\boldsymbol{\Psi}_{\mathfrak{s}}\right>_{-1,1} - \int_{\omega_{\mathfrak{s}}}\boldsymbol{r}(\hat{\boldsymbol{u}}_{\mathfrak{m}})\boldsymbol{J}^I(\hat{\boldsymbol{u}}_{\mathfrak{m}})\boldsymbol{\Psi}_{\mathfrak{s}} \\
&\leq \|\boldsymbol{\mathcal{R}}_{\mathfrak{m}}(\hat{\boldsymbol{u}}_{\mathfrak{m}})\|_{-1,\omega_{\mathfrak{s}}}\|\boldsymbol{J}^I(\hat{\boldsymbol{u}}_{\mathfrak{m}})\boldsymbol{\Psi}_{\mathfrak{s}}\|_{1}
 + \|\boldsymbol{r}(\hat{\boldsymbol{u}}_{\mathfrak{m}})\|_{\omega_{\mathfrak{s}}}\|\boldsymbol{J}^I(\hat{\boldsymbol{u}}_{\mathfrak{m}})\boldsymbol{\Psi}_{\mathfrak{s}}\|_{\omega_{\mathfrak{s}}} \\
 &\lesssim \|\boldsymbol{\mathcal{R}}_{\mathfrak{m}}(\hat{\boldsymbol{u}}_{\mathfrak{m}})\|_{-1,\omega_{\mathfrak{s}}}h_{\mathfrak{s}}^{-\frac{1}{2}}\|\boldsymbol{J}^I(\hat{\boldsymbol{u}}_{\mathfrak{m}})\|_{\mathfrak{s}}
 + \|\boldsymbol{r}(\hat{\boldsymbol{u}}_{\mathfrak{m}})\|_{\omega_{\mathfrak{s}}}h_{\mathfrak{s}}^{\frac{1}{2}} \|\boldsymbol{J}^I(\hat{\boldsymbol{u}}_{\mathfrak{m}})\boldsymbol{\Psi}_{\mathfrak{s}}\|_{\mathfrak{s}}.
\end{align*}
Dividing by $h_{\mathfrak{s}}^{\frac{1}{2}} \|\boldsymbol{J}^I(\hat{\boldsymbol{u}}_{\mathfrak{m}})\boldsymbol{\Psi}_{\mathfrak{s}}\|_{\mathfrak{s}}$ and exploiting \eqref{AbstractLowerBoundU}  we arrive at 
\begin{equation*}
\|\boldsymbol{J}^I(\hat{\boldsymbol{u}}_{\mathfrak{m}})\|_{\mathfrak{s}}\lesssim c^{\ast}\|\hat{\boldsymbol{u}} - \hat{\boldsymbol{u}}_{\mathfrak{m}}\|_{1,\omega_{\mathfrak{s}}}.
\end{equation*}
The proof for the jump terms at the Neumann boundary follows in the same way. 
Thus, we get the local lower bounds
\begin{align*}
\eta^u_{1,p}&\lesssim \|\hat{\boldsymbol{u}} - \hat{\boldsymbol{u}}_{\mathfrak{m}}\|_{1,\omega_p},&
\eta^u_{2,p}&\lesssim \|\hat{\boldsymbol{u}} - \hat{\boldsymbol{u}}_{\mathfrak{m}}\|_{1,\omega_p},&
\eta^u_{3,p}&\lesssim \|\hat{\boldsymbol{u}} - \hat{\boldsymbol{u}}_{\mathfrak{m}}\|_{1,\omega_p}.
\end{align*}
We note that the constants in the relation of error and upper and lower bounds depend on Hooke's tensor as well as on $\kappa$.

\section{Numerical results}\label{Sec:Numerics}

In this section, we demonstrate the properties of the estimators. We show the adaptively refined grids as well as the convergence and the efficiency index. Therefor, we consider different examples for which we first describe the configurations of the tests. \\
{\bf A single edge notched tension test}\\
We adapt the data from~\cite{Miehe_Welschinger_Hofacker_2010a}.
The domain is a unit square of length $\SI{1}{\milli\meter}$ with a slit on the line $y=\SI{0.5}{\milli\meter}$ and $x=[\SI{0.25}{\milli\meter}; \SI{1.0}{\milli\meter}]$.  The uniform starting mesh consists of squares with a diameter of  $h\approx \SI{0.044}{\milli\meter}$. 
The time step size is $\tau=\SI{e-5}{s}$. The Lam{\'e} coefficients are  $\mu= \SI{80.77}{\kilo\newton\per\square\milli\meter}$ and $\lambda= \SI{121.15}{\kilo\newton\per\square\milli\meter}$. Further, we have the parameters $G_c=\SI{2.7d-3}{\kilo\newton\per\milli\meter}$ and  $\kappa=10^{-8}$.
We choose $\epsilon\in [0.022; 0.325]$.

At the boundary of the unit square we impose Dirichlet and Neumann boundary conditions. On the upper boundary we pull with Dirichlet values $(u_D)_y = 2\cdot\tau$ in $y$-direction, while it is fixed in $x$-direction, i.e., $(u_D)_x=0$. At the lower boundary we fix the body in $x$- and $y$-direction, i.e., $\boldsymbol{u}_D=\boldsymbol{0}$. On the remaining boundaries we have Neumann boundaries with zero values. 

{\bf A single edge notched shear test}\\
We adapt the data from~\cite{Miehe_Welschinger_Hofacker_2010a}.
The domain is a unit square of length $\SI{1}{\milli\meter}$  with a slit on the line $y=\SI{0.5}{\milli\meter}$ and $x=[\SI{0.5}{\milli\meter}; \SI{1.0}{\milli\meter}]$. The uniform starting mesh consists of squares with a diameter of  $h\approx \SI{0.044}{\milli\meter}$. 
The time step size is $\tau=\SI{e-4}{s}$. The Lam{\'e} coefficients are  $\mu= \SI{80.77}{\kilo\newton\per\square\milli\meter}$ and $\lambda= \SI{121.15}{\kilo\newton\per\square\milli\meter}$. Further, we have the parameters $G_c=\SI{2.7d-3}{\kilo\newton\per\milli\meter}$ and  $\kappa=10^{-8}$. We choose $\epsilon\in [0.022; 0.325]$.

At the boundary of the unit square we impose Dirichlet and Neumann boundary conditions. In $x$-direction we pull to the left on the upper boundary, i.e., $(u_D)_x= -\tau$ and on the lower boundary we fix the body with $(u_D)_x=0$. On the remaining boundaries we have zero Neumann values in $x$-direction. In $y$-direction we impose zero Dirichlet values everywhere. 

{\bf An L-shape panel test}\\
We adapt the data from~\cite{Winkler2001}.
The L-shaped domain is given by
$(\SI{0}{\milli\meter},\SI{250}{\milli\meter})\times(\SI{0}{\milli\meter},\SI{500}{\milli\meter})
\cup [\SI{250}{\milli\meter},\SI{500}{\milli\meter}) \times
(\SI{250}{\milli\meter},\SI{500}{\milli\meter})$.
The uniform starting mesh consists of squares with a diameter of $h\approx \SI{17.67}{\milli\meter}$. The time step size is $\tau=\SI{e-3}{s}$. The Lam{\'e} coefficients are  $\mu= \SI{10.95}{\kilo\newton\per\square\milli\meter}$ and $\lambda= \SI{6.16}{\kilo\newton\per\square\milli\meter}$. Further, we have the parameters $G_c=\SI{8.9d-5}{\kilo\newton\per\milli\meter}$ and  $\kappa=10^{-8}$.  

At the bottom boundary we fix the body with Dirichlet boundary conditions $\boldsymbol{u}_D=\boldsymbol{0}$. Further, at the small horizontal boundary line at the right where $y=\SI{250}{\milli\meter}$ and $x=[\SI{470}{\milli\meter}; \SI{500}{\milli\meter}]$ we push with Dirichlet boundary conditions in $y$-direction, i.e., $(u_D)_y=\tau$. At all other boundaries and directions we have zero Neumann values. 

\begin{remark}
The physics of the single edge notched shear test and the L-shape panel test  demand to use the stress splitting as in Problem~\ref{WeakFormulationWithSplit} and~\ref{DiscreteFormulationWithSplit}. The stress splitting enters in the derivation of the estimator $\eta^{\varphi}$ but not in the estimator $\eta^u$ which has been derived as standard residual estimator for an elliptic problem. 
\end{remark}

For the solution a complementarity formulation of
the variational inequality as described
in~\cite[Section~4]{Mang_Walloth_Wick_Wollner_2019} is used. The
calculations are performed using~\cite{Goll_Wick_Wollner_2017} based
on the finite element library~\cite{dealii2019design}.

\subsection{Adaptive refinement using the estimator $\eta^{\varphi}$}

\subsubsection{Adaptively refined grids}

We show the adaptively refined grids steered by the estimator
$\eta^{\varphi}$ and the corresponding phase field. For the tension
and the shear test, we show grids and the phase field at two different
time steps. One time step is chosen in the middle of the simulation
when the crack already started to grow and the second time step is the
moment when the crack reaches the boundary. We see for the tension
test in Figure~\ref{fig:tension_preRef4_Mesh6_Grid_etaPhi}, for the
shear test in Figure~\ref{fig:shear_preRef4_Mesh5_Grid_etaPhi}, and
for the L-shape panel test in
Figure~\ref{fig:Lshape_Mesh5_Grid_etaPhi} that the crack path is well
resolved.
\vspace*{-5mm}
\begin{figure}[H]
\centering
\subfloat[mesh at $n=310$]{\includegraphics[width=0.22\textwidth,clip,trim=20cm 4cm 20cm 4cm]{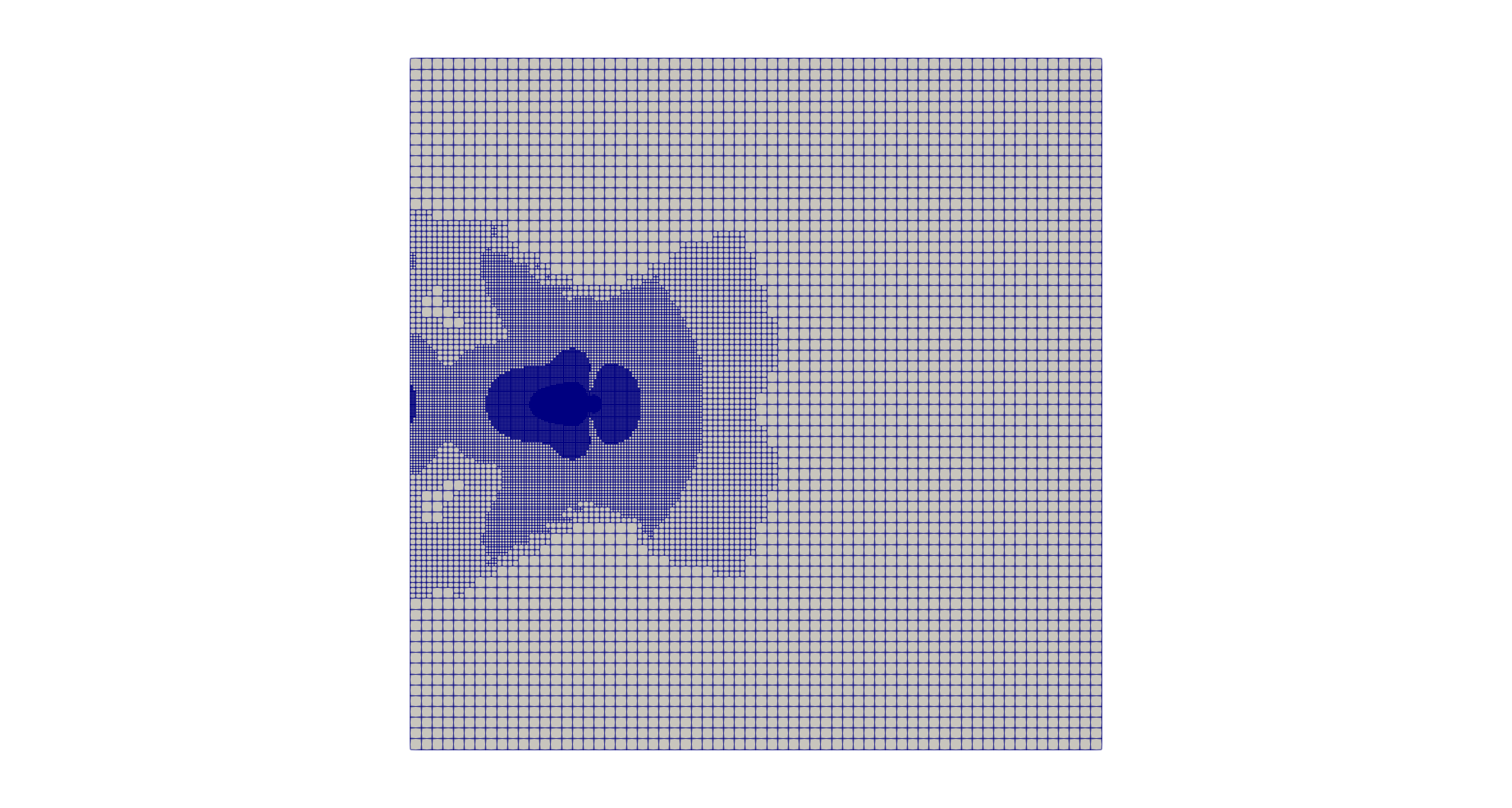}}\hfill
\subfloat[$\varphi$ at $n=310$]{\includegraphics[width=0.22\textwidth,clip,trim=20cm 4cm 20cm 4cm]{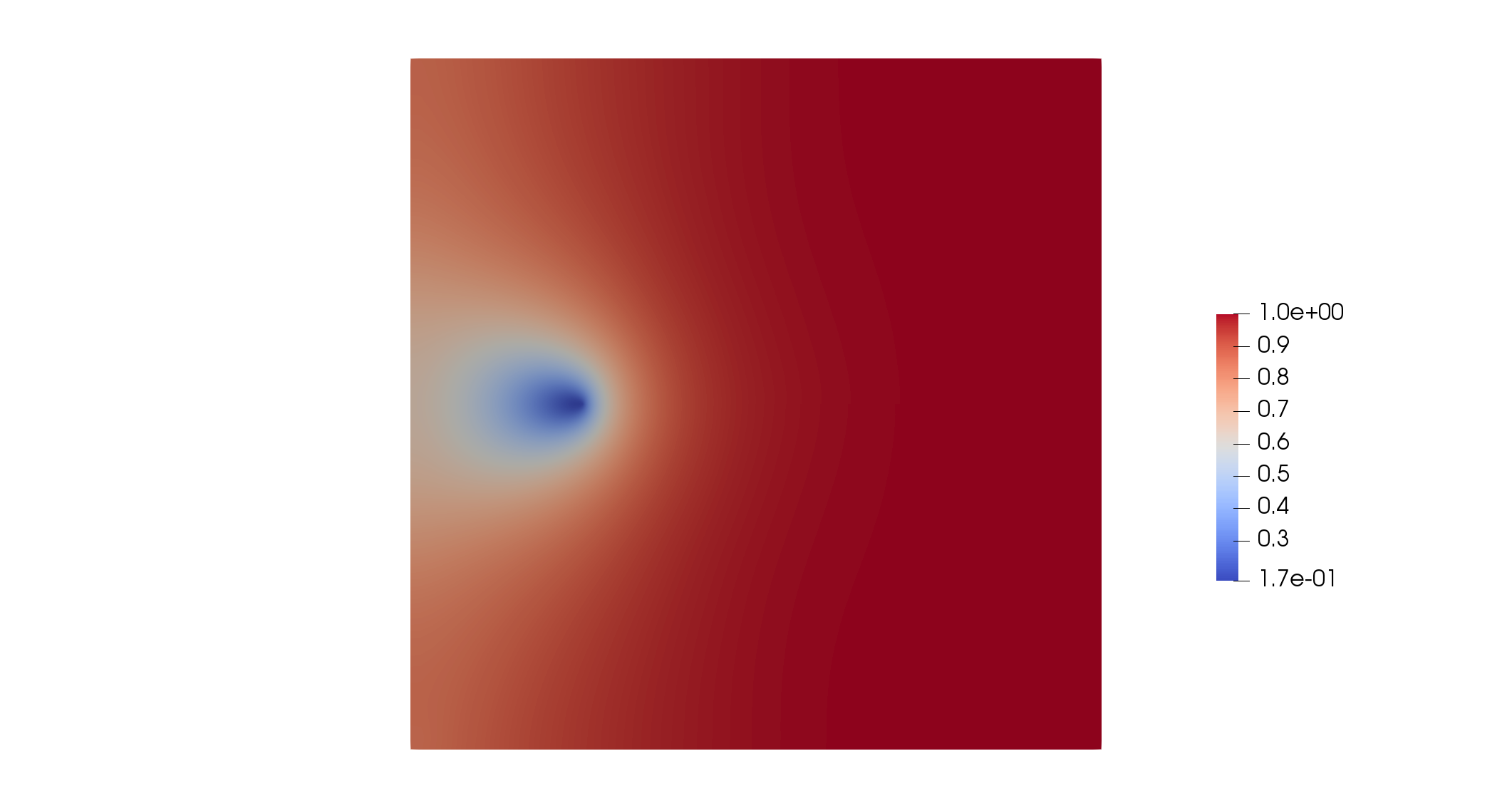}}\hfill
\subfloat[mesh at $n=324$]{\includegraphics[width=0.22\textwidth,clip,trim=20cm 4cm 20cm 4cm]{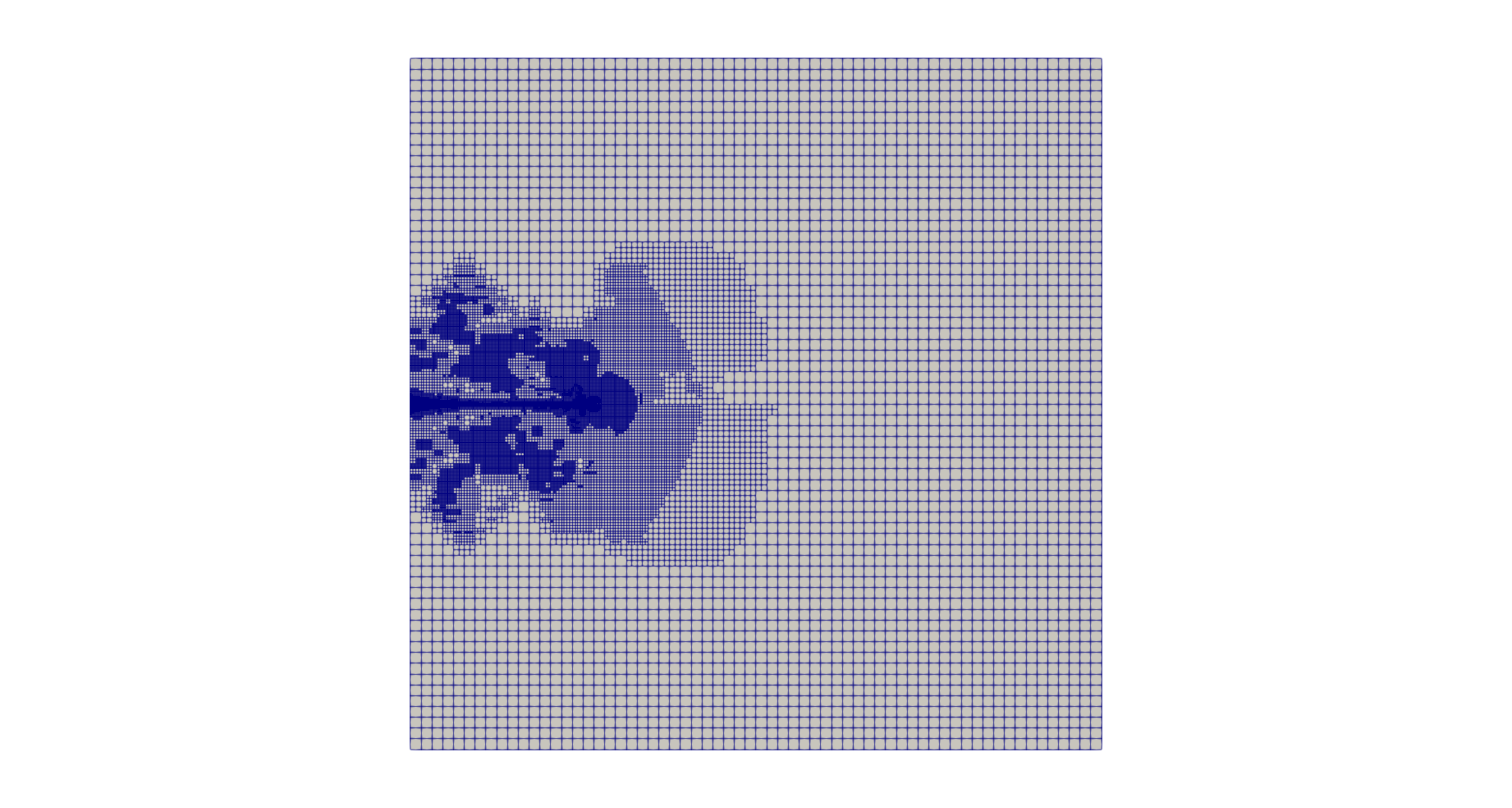}}\hfill
\subfloat[$\varphi$ at $n=324$]{\includegraphics[width=0.22\textwidth,clip,trim=20cm 4cm 20cm 4cm]{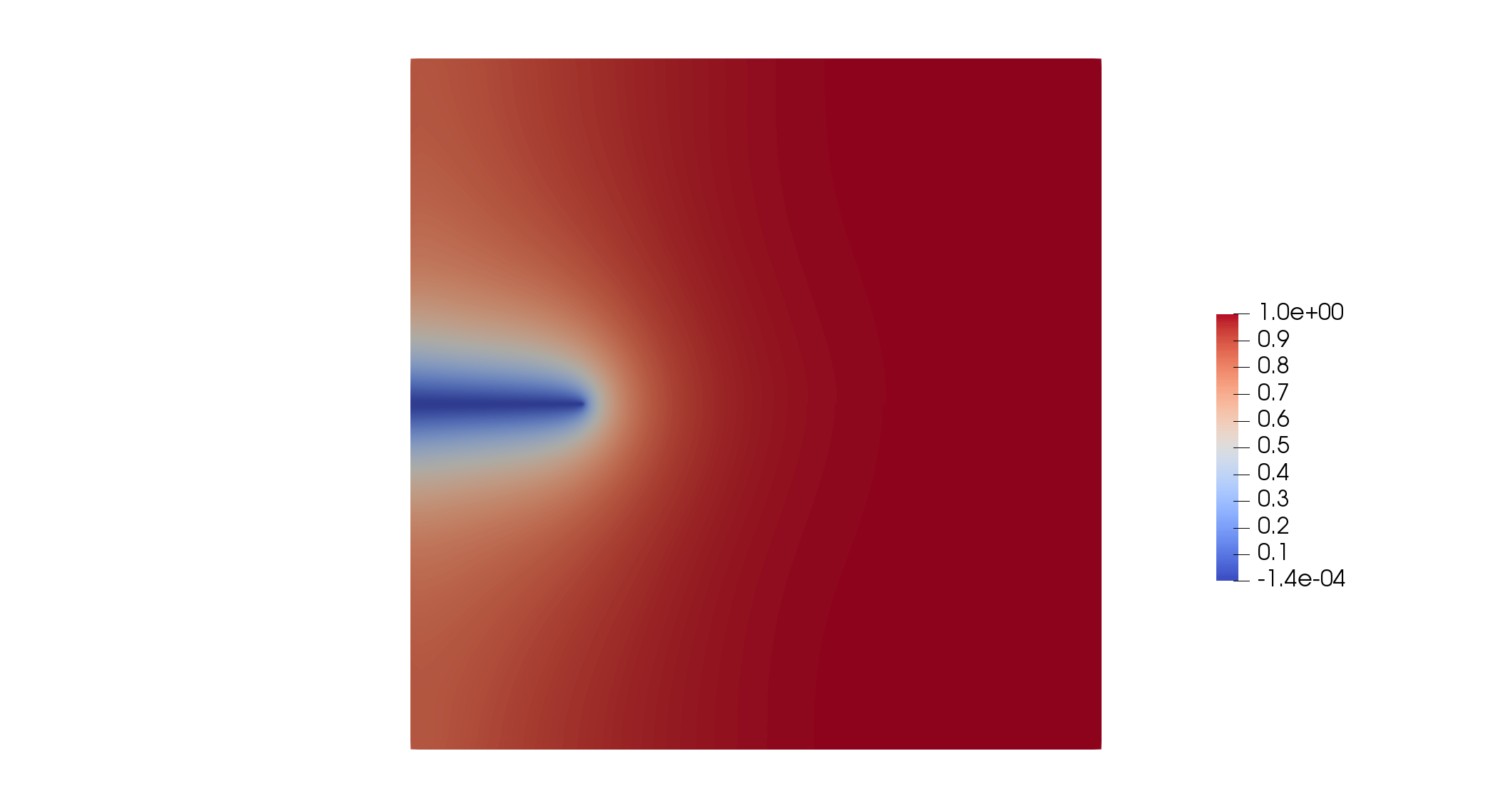}}
\caption{Tension test with $\epsilon=0.088$ at different time points after six adaptive refinement steps based on the estimator $\eta^{\varphi}$. Values of $\varphi \approx 1$ are colored in red, values $\varphi \approx 0$ are blue.}
\label{fig:tension_preRef4_Mesh6_Grid_etaPhi}
\end{figure}
\vspace*{-10mm}
\begin{figure}[H]
\centering
\subfloat[mesh at $n=120$]{\includegraphics[width=0.22\textwidth,clip,trim=20cm 4cm 20cm 4cm]{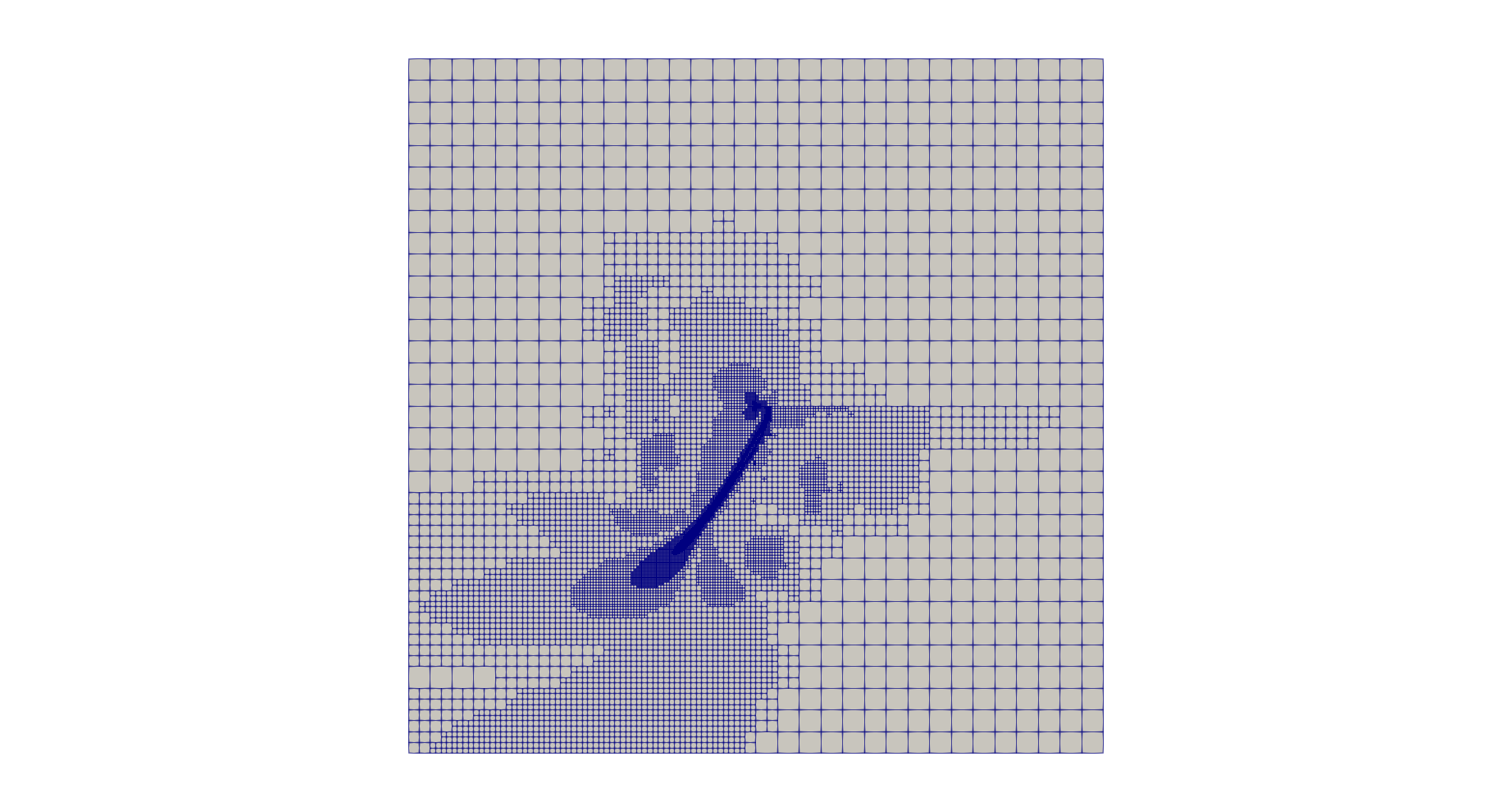}}\hfill
\subfloat[ $\varphi$ at $n=120$]{\includegraphics[width=0.22\textwidth,clip,trim=20cm 4cm 20cm 4cm]{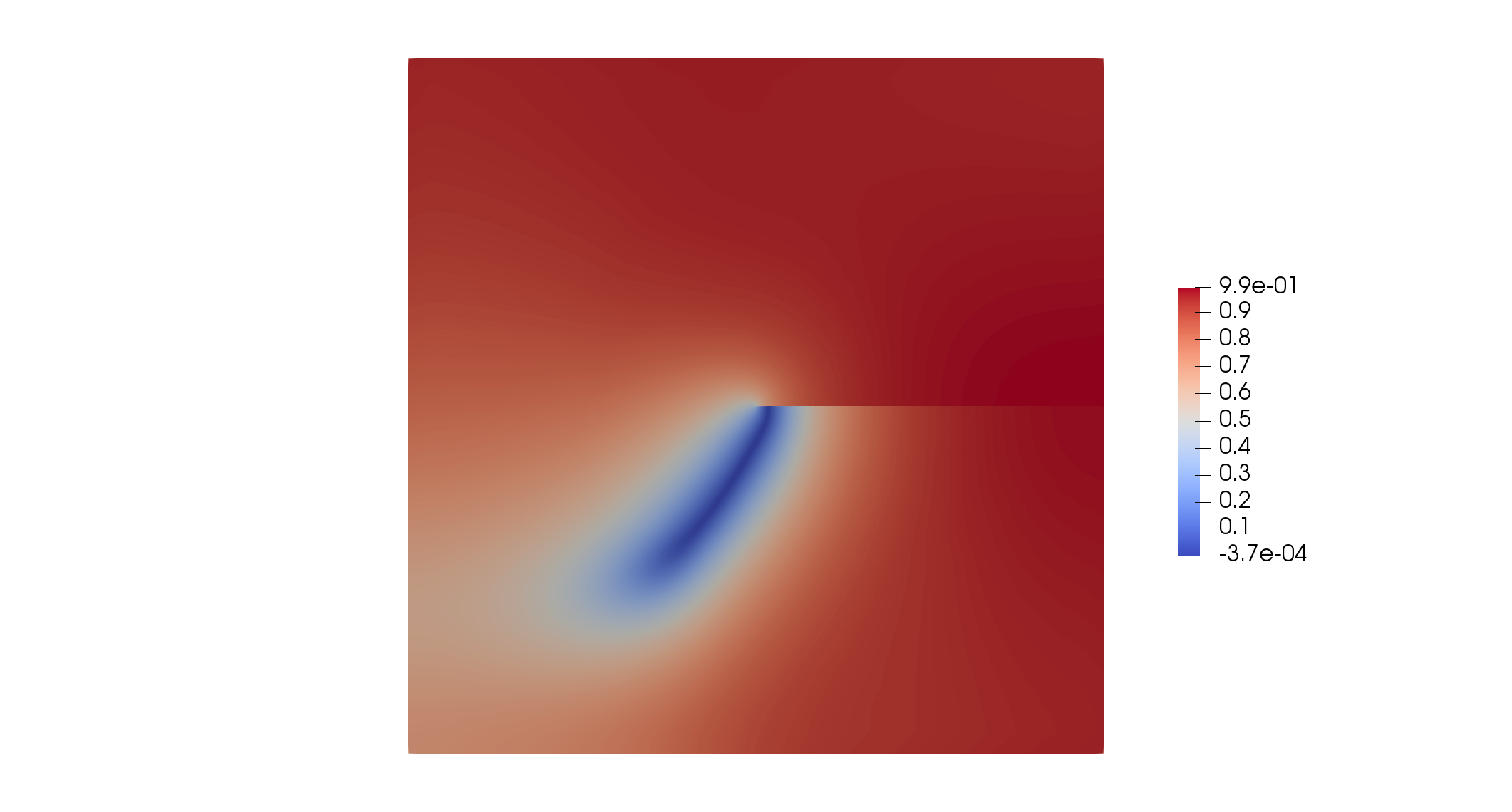}}\hfill
\subfloat[mesh at $n=132$]{\includegraphics[width=0.22\textwidth,clip,trim=20cm 4cm 20cm 4cm]{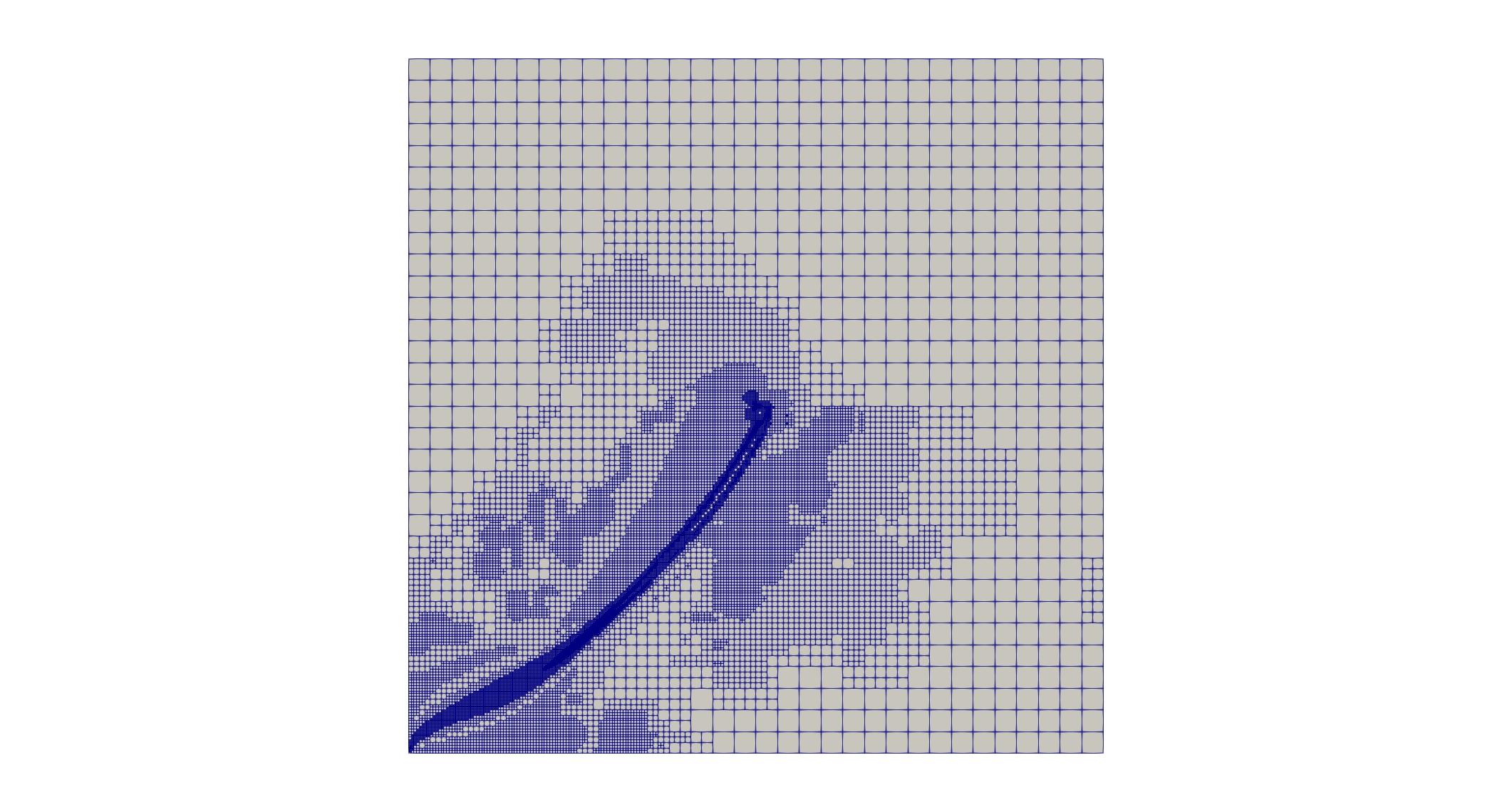}}\hfill
\subfloat[ $\varphi$ at $n=132$]{\includegraphics[width=0.22\textwidth,clip,trim=20cm 4cm 20cm 4cm]{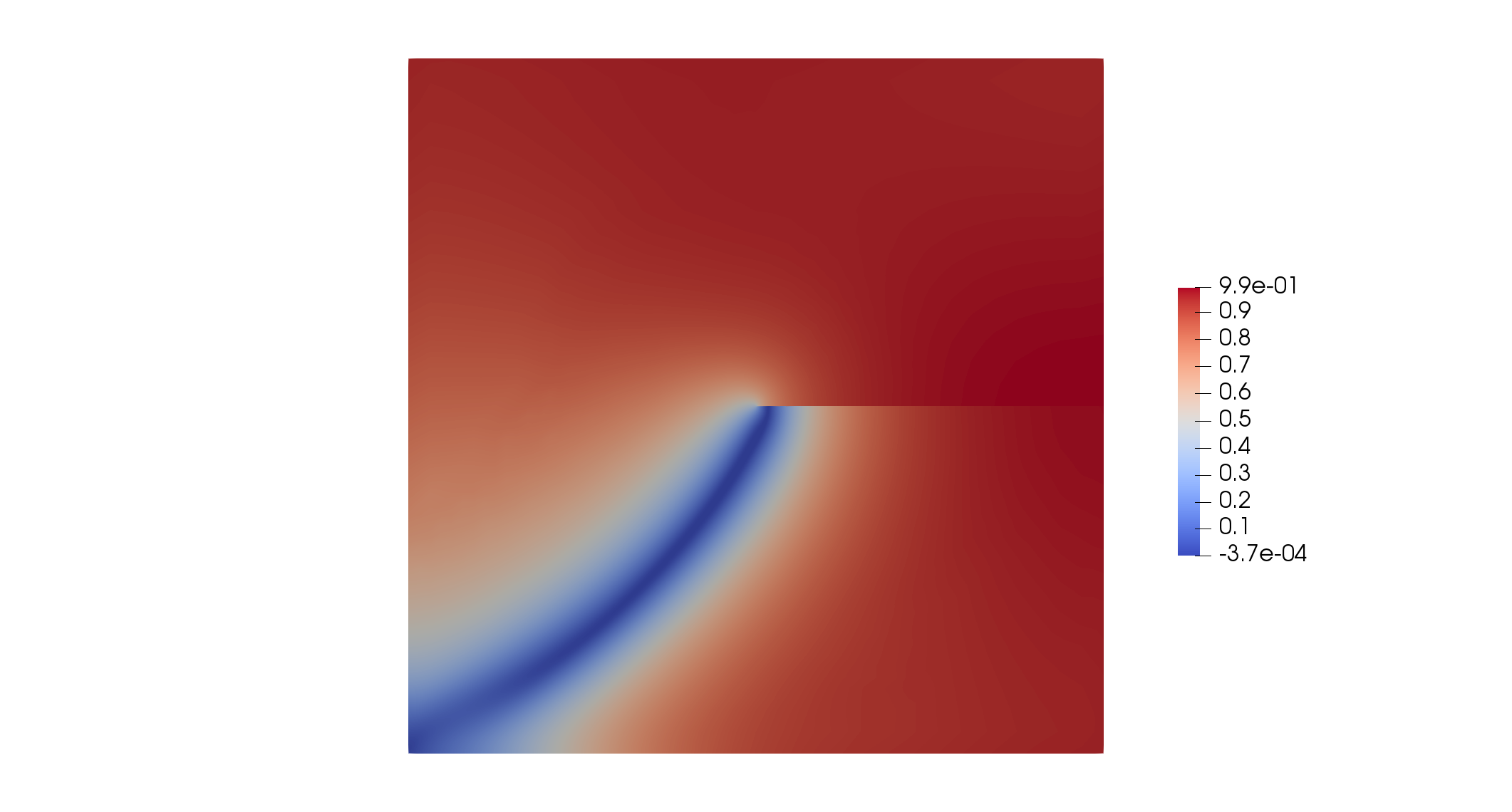}}
\caption{Shear test with $\epsilon=0.088$ at different time points after five adaptive refinement steps based on the estimator $\eta^{\varphi}$. Values of $\varphi \approx 1$ are colored in red, values $\varphi \approx 0$ are blue.}
\label{fig:shear_preRef4_Mesh5_Grid_etaPhi}
\end{figure}
\vspace*{-10mm}
\begin{figure}[H]
\centering
\hfill
\subfloat[mesh at $n=300$]{\includegraphics[width=0.2\textwidth,clip,trim=20cm 4cm 20cm 4cm]{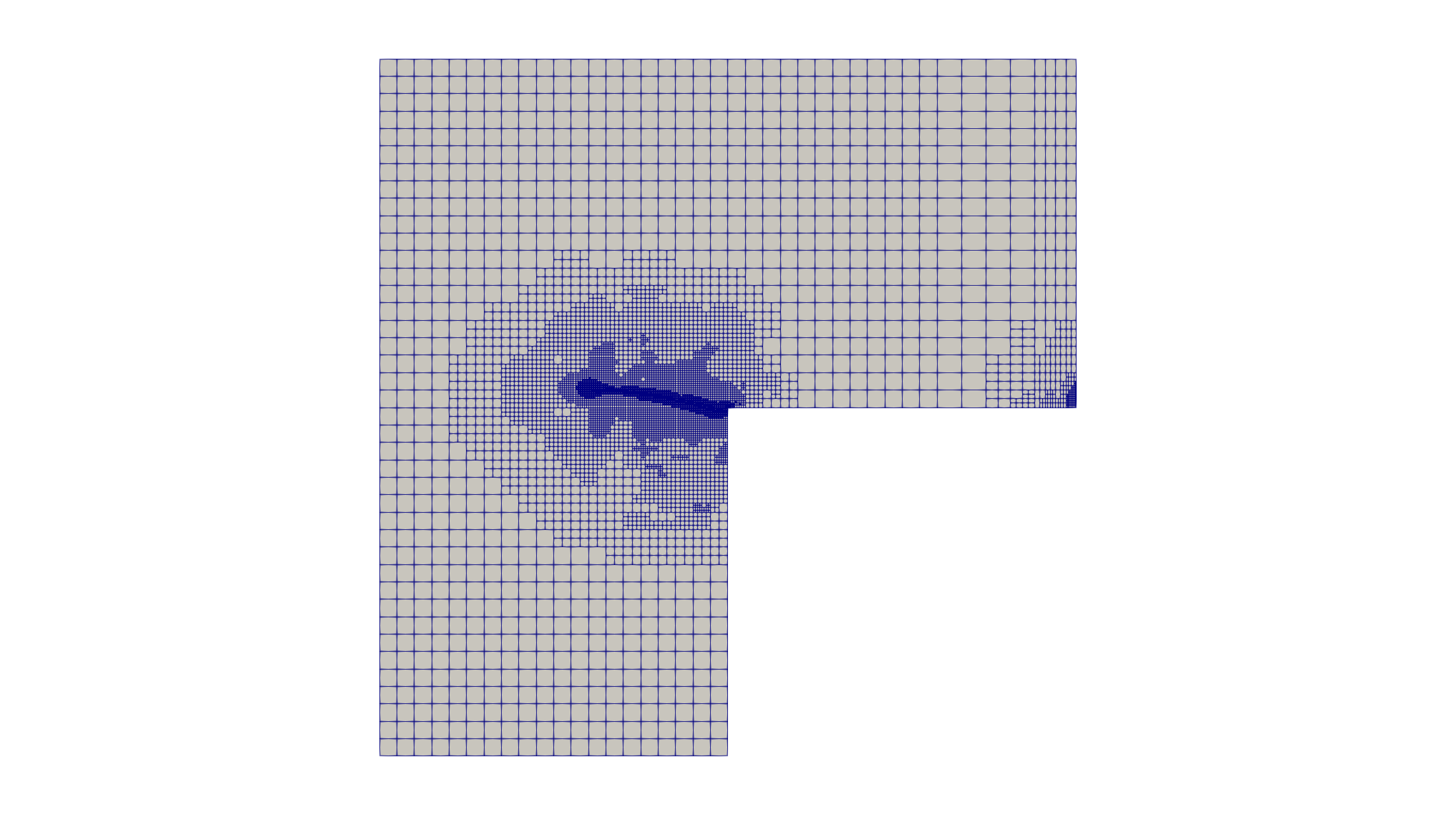}}\hfill\hfill
\subfloat[ $\varphi$ at $n=300$]{\includegraphics[width=0.2\textwidth,clip,trim=20cm 4cm 20cm 4cm]{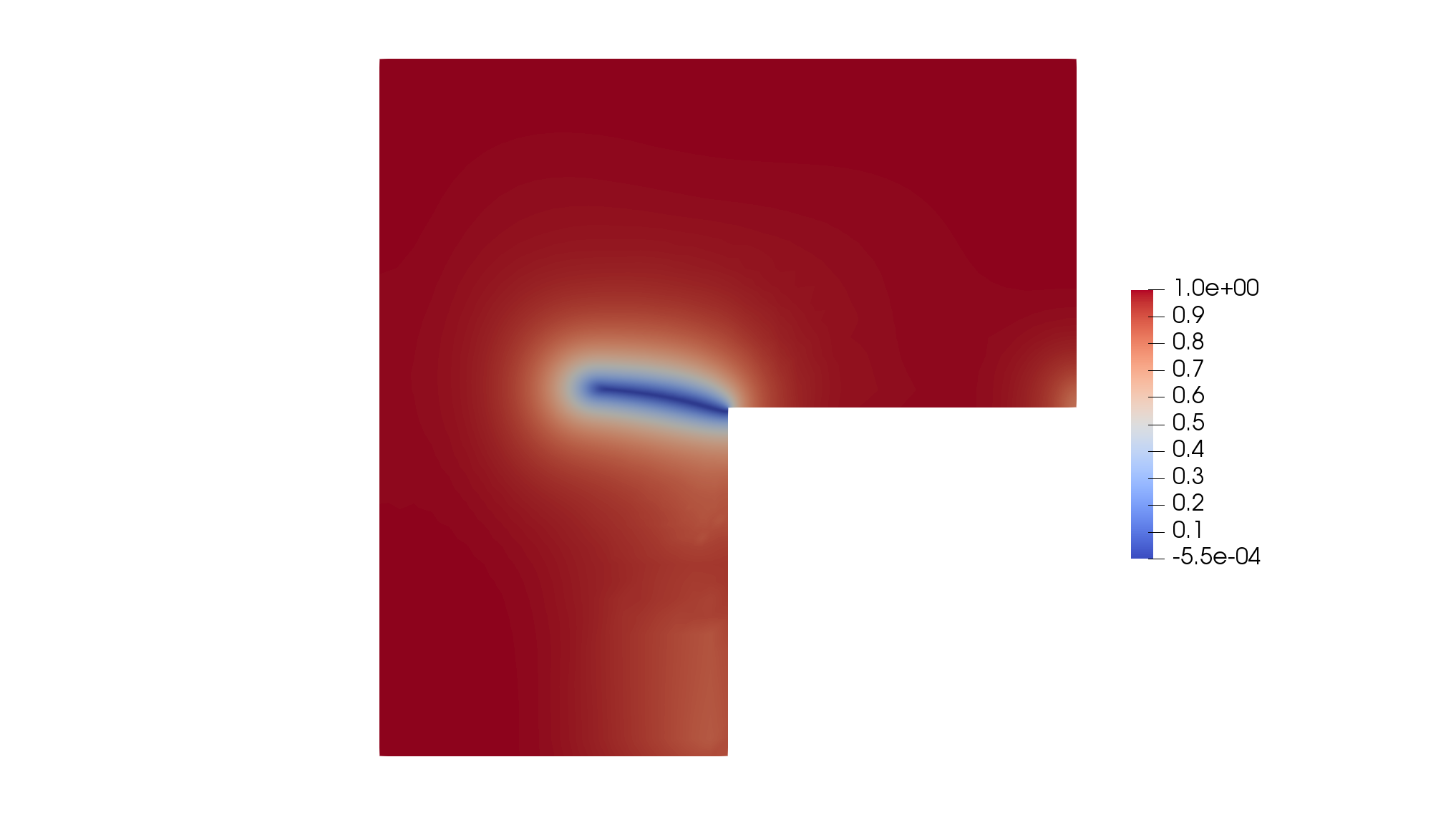}}\hfill \ 
\caption{L-shape test with $\epsilon=20$ at different time points after five adaptive refinement steps based on the estimator $\eta^{\varphi}$. Values of $\varphi \approx 1$ are colored in red, values $\varphi \approx 0$ are blue.}
\label{fig:Lshape_Mesh5_Grid_etaPhi}
\end{figure}
\vspace*{-10mm}

\subsubsection{Crack and bulk energy and load-displacement curves}

Further, we show the crack and the bulk energy as well as plots for the load-displacement curves for all three tests in Figures~\ref{fig:Tension_Quantities},~\ref{fig:Shear_Quantities} and~\ref{fig:Lshape_Quantities}. The curves converge with the adaptive refinement.
\vspace*{-5mm}
\begin{figure}[H]
    \definecolor{col0}{rgb}{0,0,0}
    \definecolor{col1}{rgb}{0.2,0.3,1}
    \definecolor{col2}{rgb}{0.5,1,1}
    \definecolor{col3}{rgb}{0.2,0.8,0.5}
    \definecolor{col4}{rgb}{0.9,0.8,0.3}
    \definecolor{col5}{rgb}{0.9,0.5,0.1}
    \definecolor{col6}{rgb}{0.9,0.1,0}
    \definecolor{col7}{rgb}{0.4,0.2,0.9}
  \centering
\subfloat[crack energy]{\begin{tikzpicture}
    \begin{axis}[clip marker paths=true,
      ylabel near ticks, xlabel near ticks,
      ylabel={crack energy},
      xtick={0,0.0005,0.001,0.0015,0.002,0.0025,0.003,0.0035},
      xtick scale label code/.code={\pgfmathparse{int(-#1)} time $\cdot 10^{-\pgfmathresult}$},
      every x tick scale label/.style={at={(0.5,-0.1)},anchor=north},
      ytick={0,0.2,0.4,0.6,0.8,1,1.2},
      width=0.32\textwidth,font=\scriptsize,legend cell
      align=left, xmin=0, xmax=0.0037, ymin=0, ymax=1.2]
      \pgfplotsset{legend style={at={(0,1)}, anchor=north
          west},legend columns=6,row sep=-4pt,legend to name=TensionModified_phiEst}
      \addplot[very thin,color=col0, solid, mark=o, mark size=1pt] table[x index=0, y index=1, col sep=tab]
      {Data/TensionModified_phiEst_eps088_grid/CrackEnergy_Step0.dat};
      \addlegendentry{start mesh}
      \addplot[very thin,color=col1, dashed] table[x index=0, y index=1, col sep=tab]
      {Data/TensionModified_phiEst_eps088_grid/CrackEnergy_Step1.dat};
      \addlegendentry{adaptive 1}
      \addplot[very thin,color=col2, solid, mark=o, mark size=1pt] table[x index=0, y index=1, col sep=tab]
      {Data/TensionModified_phiEst_eps088_grid/CrackEnergy_Step2.dat};
      \addlegendentry{adaptive 2}
      \addplot[very thin,color=col3, solid, mark=star, mark size=2pt] table[x index=0, y index=1, col sep=tab]
      {Data/TensionModified_phiEst_eps088_grid/CrackEnergy_Step3.dat};
      \addlegendentry{adaptive 3}
      \addplot[very thin,color=col4, solid, mark=diamond, mark size=1pt] table[x index=0, y index=1, col sep=tab]
      {Data/TensionModified_phiEst_eps088_grid/CrackEnergy_Step4.dat};
      \addlegendentry{adaptive 4}
      \addplot[very thin,color=col5, solid] table[x index=0, y index=1, col sep=tab]
      {Data/TensionModified_phiEst_eps088_grid/CrackEnergy_Step5.dat};
      \addlegendentry{adaptive 5}
    \end{axis}
  \end{tikzpicture}}\hfill
  \subfloat[bulk energy]{\begin{tikzpicture}
    \begin{axis}[clip marker paths=true,
      ylabel near ticks, xlabel near ticks,
      ylabel={bulk energy},
      xtick={0,0.0005,0.001,0.0015,0.002,0.0025,0.003,0.0035},
      xtick scale label code/.code={\pgfmathparse{int(-#1)} time $\cdot 10^{-\pgfmathresult}$},
      every x tick scale label/.style={at={(0.5,-0.1)},anchor=north},
      ytick={0,0.2,0.4,0.6,0.8,1,1.2,1.4},
      width=0.32\textwidth,font=\scriptsize,legend cell
      align=left, xmin=0, xmax=0.0037, ymin=0, ymax=1.4]
      \addplot[very thin,color=col0, solid, mark=o, mark size=1pt] table[x index=0, y index=1, col sep=tab]
      {Data/TensionModified_phiEst_eps088_grid/BulkEnergy_Step0.dat};
      \addplot[very thin,color=col1, dashed] table[x index=0, y index=1, col sep=tab]
      {Data/TensionModified_phiEst_eps088_grid/BulkEnergy_Step1.dat};
      \addplot[very thin,color=col2, solid, mark=o, mark size=1pt] table[x index=0, y index=1, col sep=tab]
      {Data/TensionModified_phiEst_eps088_grid/BulkEnergy_Step2.dat};
      \addplot[very thin,color=col3, solid, mark=star, mark size=2pt] table[x index=0, y index=1, col sep=tab]
      {Data/TensionModified_phiEst_eps088_grid/BulkEnergy_Step3.dat};
      \addplot[very thin,color=col4, solid, mark=diamond, mark size=1pt] table[x index=0, y index=1, col sep=tab]
      {Data/TensionModified_phiEst_eps088_grid/BulkEnergy_Step4.dat};
      \addplot[very thin,color=col5, solid] table[x index=0, y index=1, col sep=tab]
      {Data/TensionModified_phiEst_eps088_grid/BulkEnergy_Step5.dat};
    \end{axis}
  \end{tikzpicture}}\hfill
  \subfloat[load displacement curves]{\begin{tikzpicture}
    \begin{axis}[clip marker paths=true,
        ylabel near ticks, xlabel near ticks,
        ylabel={load},
      xtick={0,0.001,0.002,0.003,0.004,0.005,0.006,0.007},
      xtick scale label code/.code={\pgfmathparse{int(-#1)} displacement $\cdot 10^{-\pgfmathresult}$},
      every x tick scale label/.style={at={(0.5,-0.1)},anchor=north},
      ytick={0,50,100,150,200,250,300,350,400},
      width=0.32\textwidth,font=\scriptsize,legend cell
      align=left, xmin=0, xmax=0.0074, ymin=0, ymax=400]
      \addplot[very thin,color=col0, solid, mark=o, mark size=1pt] table[x expr=\thisrowno{0}*2, y index=1, col sep=tab]
      {Data/TensionModified_phiEst_eps088_grid/StressY_Step0.dat};
      \addplot[very thin,color=col1, dashed] table[x expr=\thisrowno{0}*2, y index=1, col sep=tab]
      {Data/TensionModified_phiEst_eps088_grid/StressY_Step1.dat};
      \addplot[very thin,color=col2, solid, mark=o, mark size=1pt] table[x expr=\thisrowno{0}*2, y index=1, col sep=tab]
      {Data/TensionModified_phiEst_eps088_grid/StressY_Step2.dat};
      \addplot[very thin,color=col3, solid, mark=star, mark size=2pt] table[x expr=\thisrowno{0}*2, y index=1, col sep=tab]
      {Data/TensionModified_phiEst_eps088_grid/StressY_Step3.dat};
      \addplot[very thin,color=col4, solid, mark=diamond, mark size=1pt] table[x expr=\thisrowno{0}*2, y index=1, col sep=tab]
      {Data/TensionModified_phiEst_eps088_grid/StressY_Step4.dat};
      \addplot[very thin,color=col5, solid] table[x expr=\thisrowno{0}*2, y index=1, col sep=tab]
      {Data/TensionModified_phiEst_eps088_grid/StressY_Step5.dat};
    \end{axis}
  \end{tikzpicture}}\\
\vspace{-3mm}
\hfill\subfloat{\begin{tikzpicture}[font=\scriptsize]
    \ref{TensionModified_phiEst}
    \end{tikzpicture}
  }
  \caption{Tension test with $\epsilon=0.088$.}
\label{fig:Tension_Quantities}
\end{figure}
\vspace*{-10mm}
\begin{figure}[H]
    \definecolor{col0}{rgb}{0,0,0}
    \definecolor{col1}{rgb}{0.2,0.3,1}
    \definecolor{col2}{rgb}{0.5,1,1}
    \definecolor{col3}{rgb}{0.2,0.8,0.5}
    \definecolor{col4}{rgb}{0.9,0.8,0.3}
    \definecolor{col5}{rgb}{0.9,0.5,0.1}
    \definecolor{col6}{rgb}{0.9,0.1,0}
    \definecolor{col7}{rgb}{0.4,0.2,0.9}
  \centering
\subfloat[crack energy]{\begin{tikzpicture}
    \begin{axis}[clip marker paths=true,
      ylabel near ticks, xlabel near ticks,
      ylabel={crack energy},
      xtick={0,0.004,0.008,0.012,0.016},
      ytick={0,0.5,1,1.5,2,2.5,3},
      xtick scale label code/.code={\pgfmathparse{int(-#1)} time $\cdot 10^{-\pgfmathresult}$},
      every x tick scale label/.style={at={(0.5,-0.1)},anchor=north},
      width=0.32\textwidth,font=\scriptsize,legend cell
      align=left, xmin=0, xmax=0.018, ymin=0, ymax=3]
      \pgfplotsset{legend style={at={(0,1)}, anchor=north
          west},legend columns=6,row sep=-4pt,legend to name=Shear_phiEst}
      \addplot[very thin,color=col0, solid, mark=o, mark size=1pt] table[x index=0, y index=1, col sep=tab]
      {Data/Shear_phiEstBd2_eps088_grid/CrackEnergy_Step0.gpl};
      \addlegendentry{start mesh}
      \addplot[very thin,color=col1, dashed] table[x index=0, y index=1, col sep=tab]
      {Data/Shear_phiEstBd2_eps088_grid/CrackEnergy_Step1.gpl};
      \addlegendentry{adaptive 1}
      \addplot[very thin,color=col2, solid, mark=o, mark size=1pt] table[x index=0, y index=1, col sep=tab]
      {Data/Shear_phiEstBd2_eps088_grid/CrackEnergy_Step2.gpl};
      \addlegendentry{adaptive 2}
      \addplot[very thin,color=col3, solid, mark=star, mark size=2pt] table[x index=0, y index=1, col sep=tab]
      {Data/Shear_phiEstBd2_eps088_grid/CrackEnergy_Step3.gpl};
      \addlegendentry{adaptive 3}
      \addplot[very thin,color=col4, solid, mark=diamond, mark size=1pt] table[x index=0, y index=1, col sep=tab]
      {Data/Shear_phiEstBd2_eps088_grid/CrackEnergy_Step4.gpl};
      \addlegendentry{adaptive 4}
      \addplot[very thin,color=col5, solid] table[x index=0, y index=1, col sep=tab]
      {Data/Shear_phiEstBd2_eps088_grid/CrackEnergy_Step5.gpl};
      \addlegendentry{adaptive 5}
    \end{axis}
  \end{tikzpicture}}\hfill
  \subfloat[bulk energy]{\begin{tikzpicture}
    \begin{axis}[clip marker paths=true,
        ylabel near ticks, xlabel near ticks,
      ylabel={bulk energy},
      xtick={0,0.004,0.008,0.012,0.016},
      xtick scale label code/.code={\pgfmathparse{int(-#1)} time $\cdot 10^{-\pgfmathresult}$},
      every x tick scale label/.style={at={(0.5,-0.1)},anchor=north},
      ytick={0,0.5,1,1.5,2,2.5,3},
      width=0.32\textwidth,font=\scriptsize,legend cell
      align=left, xmin=0, xmax=0.018, ymin=0, ymax=3]
      \addplot[very thin,thin,color=col0, solid, mark=o, mark size=1pt] table[x index=0, y index=1, col sep=tab]
      {Data/Shear_phiEstBd2_eps088_grid/BulkEnergy_Step0.gpl};
      \addplot[very thin,thin,color=col1, dashed] table[x index=0, y index=1, col sep=tab]
      {Data/Shear_phiEstBd2_eps088_grid/BulkEnergy_Step1.gpl};
      \addplot[very thin,thin,color=col2, solid, mark=o, mark size=1pt] table[x index=0, y index=1, col sep=tab]
      {Data/Shear_phiEstBd2_eps088_grid/BulkEnergy_Step2.gpl};
      \addplot[very thin,thin,color=col3, solid, mark=star, mark size=2pt] table[x index=0, y index=1, col sep=tab]
      {Data/Shear_phiEstBd2_eps088_grid/BulkEnergy_Step3.gpl};
      \addplot[very thin,thin,color=col4, solid, mark=diamond, mark size=1pt] table[x index=0, y index=1, col sep=tab]
      {Data/Shear_phiEstBd2_eps088_grid/BulkEnergy_Step4.gpl};
      \addplot[very thin,thin,color=col5, solid] table[x index=0, y index=1, col sep=tab]
      {Data/Shear_phiEstBd2_eps088_grid/BulkEnergy_Step5.gpl};
    \end{axis}
  \end{tikzpicture}}\hfill
  \subfloat[load displacement curves]{\begin{tikzpicture}
      \begin{axis}[clip marker paths=true,
        ylabel near ticks, xlabel near ticks,
        ylabel={load},
        xtick={0,0.004,0.008,0.012,0.016},
        xtick scale label code/.code={\pgfmathparse{int(-#1)} displacement $\cdot 10^{-\pgfmathresult}$},
        every x tick scale label/.style={at={(0.5,-0.1)},anchor=north},
        ytick={0,100,200,300,400,500,600,700},
        width=0.32\textwidth,font=\scriptsize,legend cell
        align=left, xmin=0, xmax=0.018, ymin=0, ymax=700]
        \addplot[very thin,color=col0, solid, mark=o, mark size=1pt] table[x index=0, y index=1, col sep=tab]
      {Data/Shear_phiEstBd2_eps088_grid/StressX_Step0.gpl};
      \addplot[very thin,color=col1, dashed] table[x index=0, y index=1, col sep=tab]
      {Data/Shear_phiEstBd2_eps088_grid/StressX_Step1.gpl};
      \addplot[very thin,color=col2, solid, mark=o, mark size=1pt] table[x index=0, y index=1, col sep=tab]
      {Data/Shear_phiEstBd2_eps088_grid/StressX_Step2.gpl};
      \addplot[very thin,color=col3, solid, mark=star, mark size=2pt] table[x index=0, y index=1, col sep=tab]
      {Data/Shear_phiEstBd2_eps088_grid/StressX_Step3.gpl};
      \addplot[very thin,color=col4, solid, mark=diamond, mark size=1pt] table[x index=0, y index=1, col sep=tab]
      {Data/Shear_phiEstBd2_eps088_grid/StressX_Step4.gpl};
      \addplot[very thin,color=col5, solid] table[x index=0, y index=1, col sep=tab]
      {Data/Shear_phiEstBd2_eps088_grid/StressX_Step5.gpl};
    \end{axis}
  \end{tikzpicture}}\\
\vspace{-3mm}
\hfill\subfloat{\begin{tikzpicture}[font=\scriptsize]
    \ref{Shear_phiEst}
    \end{tikzpicture}
  }
  \caption{Shear test with $\epsilon=0.088$.}
\label{fig:Shear_Quantities}
\end{figure}
\vspace*{-10mm}
\begin{figure}[H]
    \definecolor{col0}{rgb}{0,0,0}
    \definecolor{col1}{rgb}{0.2,0.3,1}
    \definecolor{col2}{rgb}{0.5,1,1}
    \definecolor{col3}{rgb}{0.2,0.8,0.5}
    \definecolor{col4}{rgb}{0.9,0.8,0.3}
    \definecolor{col5}{rgb}{0.9,0.5,0.1}
    \definecolor{col6}{rgb}{0.9,0.1,0}
    \definecolor{col7}{rgb}{0.4,0.2,0.9}
  \centering
  \subfloat[crack energy]{\begin{tikzpicture}
      \begin{axis}[clip marker paths=true,
      ylabel near ticks, xlabel near ticks,
      xlabel={time},ylabel={crack energy},
      xtick={0,0.05,0.1,0.15,0.2,0.25,0.3},
      ytick={0,2,4,6,8,10,12,14},
      xticklabel style={/pgf/number format/fixed},
      width=0.32\textwidth,font=\scriptsize,legend cell
      align=left, xmin=0, xmax=0.3, ymin=0, ymax=14]
      \pgfplotsset{legend style={at={(0,1)}, anchor=north
          west},legend columns=6,row sep=-4pt,legend to name=LShape_phiEst}
      \addplot[very thin,color=col0, solid, mark=o, mark size=1pt] table[x index=0, y index=1, col sep=tab]
      {Data/LShape_phiEst_eps20_gridNEU/CrackEnergy_Step0.dat};
      \addlegendentry{start mesh}
      \addplot[very thin,color=col1, dashed] table[x index=0, y index=1, col sep=tab]
      {Data/LShape_phiEst_eps20_gridNEU/CrackEnergy_Step1.dat};
      \addlegendentry{adaptive 1}
      \addplot[very thin,color=col2, solid, mark=o, mark size=1pt] table[x index=0, y index=1, col sep=tab]
      {Data/LShape_phiEst_eps20_gridNEU/CrackEnergy_Step2.dat};
      \addlegendentry{adaptive 2}
      \addplot[very thin,color=col3, solid, mark=star, mark size=2pt] table[x index=0, y index=1, col sep=tab]
      {Data/LShape_phiEst_eps20_gridNEU/CrackEnergy_Step3.dat};
      \addlegendentry{adaptive 3}
      \addplot[very thin,color=col4, solid, mark=diamond, mark size=1pt] table[x index=0, y index=1, col sep=tab]
      {Data/LShape_phiEst_eps20_gridNEU/CrackEnergy_Step4.dat};
      \addlegendentry{adaptive 4}
      \addplot[very thin,color=col5, solid] table[x index=0, y index=1, col sep=tab]
      {Data/LShape_phiEst_eps20_gridNEU/CrackEnergy_Step5.dat};
      \addlegendentry{adaptive 5}
    \end{axis}
  \end{tikzpicture}}\hfill
  \subfloat[bulk energy]{\begin{tikzpicture}
    \begin{axis}[clip marker paths=true,
        ylabel near ticks, xlabel near ticks,
      xlabel={time},ylabel={bulk energy},
      xtick={0,0.05,0.1,0.15,0.2,0.25,0.3},
      ytick={0,5,10,15,20,25},
      xticklabel style={/pgf/number format/fixed},
      width=0.32\textwidth,font=\scriptsize,legend cell
      align=left, xmin=0, xmax=0.3, ymin=0, ymax=25]
      \addplot[very thin,color=col0, solid, mark=o, mark size=1pt] table[x index=0, y index=1, col sep=tab]
      {Data/LShape_phiEst_eps20_gridNEU/BulkEnergy_Step0.dat};
      \addplot[very thin,color=col1, dashed] table[x index=0, y index=1, col sep=tab]
      {Data/LShape_phiEst_eps20_gridNEU/BulkEnergy_Step1.dat};
      \addplot[very thin,color=col2, solid, mark=o, mark size=1pt] table[x index=0, y index=1, col sep=tab]
      {Data/LShape_phiEst_eps20_gridNEU/BulkEnergy_Step2.dat};
      \addplot[very thin,color=col3, solid, mark=star, mark size=2pt] table[x index=0, y index=1, col sep=tab]
      {Data/LShape_phiEst_eps20_gridNEU/BulkEnergy_Step3.dat};
      \addplot[very thin,color=col4, solid, mark=diamond, mark size=1pt] table[x index=0, y index=1, col sep=tab]
      {Data/LShape_phiEst_eps20_gridNEU/BulkEnergy_Step4.dat};
      \addplot[very thin,color=col5, solid] table[x index=0, y index=1, col sep=tab]
      {Data/LShape_phiEst_eps20_gridNEU/BulkEnergy_Step5.dat};
    \end{axis}
  \end{tikzpicture}}\hfill
  \subfloat[load displacement curves]{\begin{tikzpicture}
    \begin{axis}[clip marker paths=true,
        ylabel near ticks, xlabel near ticks,
      xlabel={displacement},ylabel={load},
      xtick={0,0.05,0.1,0.15,0.2,0.25,0.3},
      ytick={0,50,100,150,200,250},
      xticklabel style={/pgf/number format/fixed},
      width=0.32\textwidth,font=\scriptsize,legend cell
      align=left, xmin=0, xmax=0.3, ymin=0, ymax=250]
      \addplot[very thin,color=col0, solid, mark=o, mark size=1pt] table[x index=0, y index=1, col sep=tab]
      {Data/LShape_phiEst_eps20_gridNEU/StressY_Step0.dat};
      \addplot[very thin,color=col1, dashed] table[x index=0, y index=1, col sep=tab]
      {Data/LShape_phiEst_eps20_gridNEU/StressY_Step1.dat};
      \addplot[very thin,color=col2, solid, mark=o, mark size=1pt] table[x index=0, y index=1, col sep=tab]
      {Data/LShape_phiEst_eps20_gridNEU/StressY_Step2.dat};
      \addplot[very thin,color=col3, solid, mark=star, mark size=2pt] table[x index=0, y index=1, col sep=tab]
      {Data/LShape_phiEst_eps20_gridNEU/StressY_Step3.dat};
      \addplot[very thin,color=col4, solid, mark=diamond, mark size=1pt] table[x index=0, y index=1, col sep=tab]
      {Data/LShape_phiEst_eps20_gridNEU/StressY_Step4.dat};
      \addplot[very thin,color=col5, solid] table[x index=0, y index=1, col sep=tab]
      {Data/LShape_phiEst_eps20_gridNEU/StressY_Step5.dat};
    \end{axis}
  \end{tikzpicture}}\\
\vspace{-3mm}
\hfill\subfloat{\begin{tikzpicture}[font=\scriptsize]
    \ref{LShape_phiEst}
    \end{tikzpicture}
  }
  \caption{L-shape test with $\epsilon=20$.}
\label{fig:Lshape_Quantities}
\end{figure}
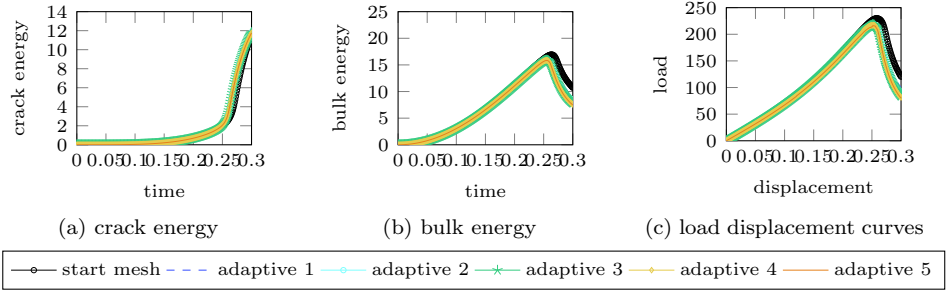
\vspace*{-5mm}

\subsubsection{Convergence in different error norms}

To demonstrate the convergence behavior of the errors, we compute reference solutions $\bu{u}^n$, $\ul{\varphi}^n$ on a finer mesh which has been at least three times more uniformly refinement than the adaptive meshes on which the solutions $\boldsymbol{u}^n_{\mathfrak{m}}$ and 
$\varphi_{\mathfrak{m}}^n$ have been computed. 

To measure the errors in $\varphi$  and $\boldsymbol{u}$, we use the
energy norm in $\varphi$
given by 
\begin{equation*}
\|\ul{\varphi}^n- \varphi_{\mathfrak{m}}^n\|_{\epsilon}^2
 = G_c\epsilon\|\nabla(\ul{\varphi}^n-\varphi_{\mathfrak{m}}^n)\|^2 + \|\left(\frac{G_c}{\epsilon} + (1-\kappa)\boldsymbol{\sigma}(\bu{u}^n):\boldsymbol{E}_{\mathrm{lin}}(\bu{u}^n)\right)^{\frac{1}{2}}(\ul{\varphi}^n- \varphi_{\mathfrak{m}}^n)\|^2 
\end{equation*}
and the energy norm in $\boldsymbol{u}$ given by
\begin{equation*}
\|\bu{u}^n-\boldsymbol{u}^n_{\mathfrak{m}}\|_{Eu}^2 :=\int_{\Omega}g(\varphi^n_{\mathfrak{m}})\boldsymbol{\sigma}(\bu{u}^n-\boldsymbol{u}^n_{\mathfrak{m}}):\boldsymbol{E}_{\mathrm{lin}}(\bu{u}^n-\boldsymbol{u}^n_{\mathfrak{m}})
\end{equation*}
As expected the adaptive refinement gives rise to a stronger error reduction for the error in $\varphi$ than the uniform refinement. But this does not only hold for the error in $\varphi$  but also for the error in $\boldsymbol{u}$ although the adaptive refinement has been steered by the estimator $\eta^{\varphi}$. 
\vspace*{-5mm}

\begin{figure}[H]
\centering
\subfloat{
  \begin{tikzpicture}
    \begin{loglogaxis}[width=0.35\textwidth,
        ylabel near ticks, xlabel near ticks,
        ylabel={error},
        xlabel={number of nodes},
        legend cell align=left,font=\scriptsize,row sep=-4pt,
      ]
      \addplot[color=red,mark=*,mark size=1pt]
              table[x expr=\thisrow{DOF}/2, y expr=sqrt(\thisrow{ERROR}), col sep=tab]
              {Data/TensionModified_Conv_eps088_phiEst/ConvergenceT280/Data_phi.dat};
      \addlegendentry{adaptive}
      \addplot[color=blue,mark=*,mark size=1pt]
              table[x expr=\thisrow{DOF}/2, y expr=sqrt(\thisrow{ERROR}), col sep=tab]
              {Data/TensionModified_Conv_eps088_phiEst/ConvergenceT280/Data_phi_uni.dat};
      \addlegendentry{uniform}        
    \end{loglogaxis}
  \end{tikzpicture}
}\hfill
\subfloat{
  \begin{tikzpicture}
    \begin{loglogaxis}[width=0.35\textwidth,
        ylabel near ticks, xlabel near ticks,
        ylabel={error},
        xlabel={number of nodes},
        ytick={0.04,0.06,0.1,0.14,0.18,0.22},
        yticklabel={
        \pgfkeys{/pgf/fpu=true}
        \pgfmathparse{exp(\tick)}
        \pgfmathprintnumber[fixed relative, precision=3]{\pgfmathresult}
        \pgfkeys{/pgf/fpu=false}
        },
        legend cell align=left,font=\scriptsize,row sep=-4pt,
        ymin=0.03, ymax=0.25
      ]
      \addplot[color=red,mark=*,mark size=1pt]
              table[x expr=\thisrow{DOF}/2, y expr=sqrt(\thisrow{ERROR}), col sep=tab]
              {Data/TensionModified_Conv_eps088_phiEst/ConvergenceT280/Data_u.dat};
      \addlegendentry{adaptive}
      \addplot[color=blue,mark=*,mark size=1pt]
              table[x expr=\thisrow{DOF}/2, y expr=sqrt(\thisrow{ERROR}), col sep=tab]
              {Data/TensionModified_Conv_eps088_phiEst/ConvergenceT280/Data_u_uni.dat};
      \addlegendentry{uniform}        
    \end{loglogaxis}
  \end{tikzpicture} 
}
\caption{Convergence in different error measures for tension test at
  $n=280$ with $\epsilon=0.088$ and adaptive refinement based on the
  estimator $\eta^{\varphi}$. Left: energy norm in $\varphi$, Right:
  energy norm in $u$.}
\label{fig:Convergence_tension}
\end{figure}
\vspace*{-15mm}
\begin{figure}[H]
\centering
\subfloat{
    \begin{tikzpicture}
    \begin{loglogaxis}[width=0.35\textwidth,
        ylabel near ticks, xlabel near ticks,
        ylabel={error},
        xlabel={number of nodes},
        ytick={0.1,1,10},
        ymax=10,
        legend cell align=left,font=\scriptsize,row sep=-4pt,
      ]
      \addplot[color=red,mark=*,mark size=1pt]
              table[x expr=\thisrow{DOF}/2, y expr=sqrt(\thisrow{ERROR}), col sep=tab]
              {Data/Shear_Conv_eps088_phiEst/Data_phi.dat};
      \addlegendentry{adaptive}
      \addplot[color=blue,mark=*,mark size=1pt]
              table[x expr=\thisrow{DOF}/2, y expr=sqrt(\thisrow{ERROR}), col sep=tab]
              {Data/Shear_Conv_eps088_phiEst/Data_phi_uni.dat};
      \addlegendentry{uniform}        
    \end{loglogaxis}
  \end{tikzpicture}
}\hfill
\subfloat{
   \begin{tikzpicture}
    \begin{loglogaxis}[width=0.35\textwidth,
        ylabel near ticks, xlabel near ticks,
        ylabel={error},
        xlabel={number of nodes},
        ytick={0.1,1},
        ymax=1,
        legend cell align=left,font=\scriptsize,row sep=-4pt
      ]
      \addplot[color=red,mark=*,mark size=1pt]
              table[x expr=\thisrow{DOF}/2, y expr=sqrt(\thisrow{ERROR}), col sep=tab]
              {Data/Shear_Conv_eps088_phiEst/Data_u.dat};
      \addlegendentry{adaptive}
      \addplot[color=blue,mark=*,mark size=1pt]
              table[x expr=\thisrow{DOF}/2, y expr=sqrt(\thisrow{ERROR}), col sep=tab]
              {Data/Shear_Conv_eps088_phiEst/Data_u_uni.dat};
      \addlegendentry{uniform}        
    \end{loglogaxis}
  \end{tikzpicture} 
}
\caption{Convergence in different error measures for shear test at
  $n=107$ with $\epsilon=0.088$ and adaptive refinement based on the
  estimator $\eta^{\varphi}$. Left: energy norm in $\varphi$, Right:
  energy norm in $u$.}
\label{fig:Convergence_shear}
\end{figure}
\vspace*{-15mm}
\begin{figure}[H]
\centering
\subfloat{
    \begin{tikzpicture}
    \begin{loglogaxis}[width=0.35\textwidth,
        ylabel near ticks, xlabel near ticks,
        ylabel={error},
        ytick={0.01,0.05,0.1,0.2},
        ymin=0.01,ymax=0.5,
        yticklabel={
        \pgfkeys{/pgf/fpu=true}
        \pgfmathparse{exp(\tick)}
        \pgfmathprintnumber[fixed relative, precision=3]{\pgfmathresult}
        \pgfkeys{/pgf/fpu=false}
        },
        xlabel={number of nodes},
        legend cell align=left,font=\scriptsize,row sep=-4pt,
      ]
      \addplot[color=red,mark=*,mark size=1pt]
              table[x expr=\thisrow{DOF}/2, y expr=sqrt(\thisrow{ERROR}), col sep=tab]
              {Data/LShape_Conv_eps20_phiEst_NEU/Data_phi.dat};
      \addlegendentry{adaptive}
      \addplot[color=blue,mark=*,mark size=1pt]
              table[x expr=\thisrow{DOF}/2, y expr=sqrt(\thisrow{ERROR}), col sep=tab]
              {Data/LShape_Conv_eps20_phiEst_NEU/Data_phi_uni.dat};
      \addlegendentry{uniform}        
    \end{loglogaxis}
  \end{tikzpicture}
}\hfill
\subfloat{
  \begin{tikzpicture}
    \begin{loglogaxis}[width=0.35\textwidth,
        ylabel near ticks, xlabel near ticks,
        ylabel={error},
        xlabel={number of nodes},
        ymin=0.25,
        ytick={0.25,0.5,0.75,1},
        yticklabel={
        \pgfkeys{/pgf/fpu=true}
        \pgfmathparse{exp(\tick)}
        \pgfmathprintnumber[fixed relative, precision=3]{\pgfmathresult}
        \pgfkeys{/pgf/fpu=false}
        },
        legend cell align=left,font=\scriptsize,row sep=-4pt,
      ]
      \addplot[color=red,mark=*,mark size=1pt]
              table[x expr=\thisrow{DOF}/2, y expr=sqrt(\thisrow{ERROR}), col sep=tab]
              {Data/LShape_Conv_eps20_phiEst_NEU/Data_u.dat};
      \addlegendentry{adaptive}
      \addplot[color=blue,mark=*,mark size=1pt]
              table[x expr=\thisrow{DOF}/2, y expr=sqrt(\thisrow{ERROR}), col sep=tab]
              {Data/LShape_Conv_eps20_phiEst_NEU/Data_u_uni.dat};
      \addlegendentry{uniform}        
    \end{loglogaxis}
  \end{tikzpicture} 
}
\caption{Convergence in different error measures for L-shape test at $n=200$ with $\epsilon=20$ and adaptive refinement based on the estimator  $\eta^{\varphi}$. Left: energy norm in $\varphi$, Right:
  energy norm in $u$.}
\label{fig:Convergence_Lshape}
\end{figure}

\vspace*{-10mm}

\subsubsection{Efficiency index}
In this subsection, we visualize the efficiency index, i.e., the
quotient of $\eta^{\varphi}$ and the energy norm $\|\ul{\varphi}^n-
\varphi_{\mathfrak{m}}^n\|_{\epsilon}$.
We  compare it to the efficiency index for a non-robust residual
estimator which can be easily derived without taking care of the
aspect of robustness.
For this we derived a residual-type a posteriori estimator for Problem \ref{AuxiliaryProblemCont} with respect to the $H^1$-norm of the error, not 
paying attention to the $\epsilon$-dependency. The derivation
basically follows along the lines of Section \ref{Sec:Reliability} and
\ref{Sec:Efficiency}. The proofs would be simplified as the standard versions of the $L^2$-approximation and the bubble functions can be used. Thus, instead of the energy norm the $H^1$-norm is taken whenever calculating efficiencies for the non-robust estimator.
\vspace*{-5mm}
\begin{figure}[H]
    \definecolor{col0}{rgb}{0,0,0}
    \definecolor{col1}{rgb}{0.2,0.3,1}
    \definecolor{col2}{rgb}{0.5,1,1}
    \definecolor{col3}{rgb}{0.2,0.8,0.5}
    \definecolor{col4}{rgb}{0.9,0.8,0.3}
    \definecolor{col5}{rgb}{0.9,0.5,0.1}
    \definecolor{col6}{rgb}{0.9,0.1,0}
    \definecolor{col7}{rgb}{0.4,0.2,0.9}
\centering
\subfloat[estimator in $\varphi$]{
  \begin{tikzpicture}
    \begin{axis}[clip marker paths=true,
      ylabel near ticks, xlabel near ticks,
      ylabel={efficiency index},
      xtick={0,5000,10000,15000,20000,25000,30000,35000},
      xtick scale label code/.code={\pgfmathparse{int(#1)} degrees of freedom $\cdot 10^{\pgfmathresult}$},
      every x tick scale label/.style={at={(0.5,-0.1)},anchor=north},
      ytick={0,2,4,6,8,10},
      width=0.38\textwidth,font=\scriptsize,legend cell
      align=left, xmin=0, xmax=35000, ymin=0, ymax=10]
      \pgfplotsset{legend style={at={(0,1)}, anchor=north
          west}, legend columns=1, row sep=-4pt,legend to name=Efficiency_Tension_phi}
      \addplot[thin,color=col0, solid, mark=o, mark size=1pt] table[x index=0, y index=1, col sep=tab]
              {Data/TensionModified_Efficiency/Data_eps0055.dat};
      \addlegendentry{$\epsilon=0.0055$}
       \addplot[thin,color=col1, dashed] table[x index=0, y index=1, col sep=tab]
              {Data/TensionModified_Efficiency/Data_eps011.dat};
      \addlegendentry{$\epsilon=0.011$}
       \addplot[thin,color=col2, solid, mark=o, mark size=1pt] table[x index=0, y index=1, col sep=tab]
              {Data/TensionModified_Efficiency/Data_eps022.dat};
      \addlegendentry{$\epsilon=0.022$}
       \addplot[thin,color=col3, solid, mark=star, mark size=1pt] table[x index=0, y index=1, col sep=tab]
              {Data/TensionModified_Efficiency/Data_eps044.dat};
      \addlegendentry{$\epsilon=0.044$}
       \addplot[thin,color=col4, solid, mark=diamond, mark size=1pt] table[x index=0, y index=1, col sep=tab]
              {Data/TensionModified_Efficiency/Data_eps088.dat};
      \addlegendentry{$\epsilon=0.088$}
       \addplot[thin,color=col5, solid] table[x index=0, y index=1, col sep=tab]
              {Data/TensionModified_Efficiency/Data_eps176.dat};
              \addlegendentry{$\epsilon=0.176$}
       \addplot[thin,color=col6, densely dashed] table[x index=0, y index=1, col sep=tab]
              {Data/TensionModified_Efficiency/Data_eps352.dat};
      \addlegendentry{$\epsilon=0.352$}
 
    \end{axis}
  \end{tikzpicture}
}\hfill
\subfloat[std. estimator]{
  \begin{tikzpicture}
    \begin{axis}[clip marker paths=true,
      ylabel near ticks, xlabel near ticks,
      ylabel={efficiency index},
      xtick={0,5000,10000,15000,20000,25000,30000,35000},
      xtick scale label code/.code={\pgfmathparse{int(#1)} degrees of freedom $\cdot 10^{\pgfmathresult}$},
      every x tick scale label/.style={at={(0.5,-0.1)},anchor=north},
      ytick={0,2,4,6,8,10},
      width=0.38\textwidth,font=\scriptsize,legend cell
      align=left, xmin=0, xmax=35000, ymin=0, ymax=10]
      \addplot[thin,color=col0, solid, mark=o, mark size=1pt] table[x index=0, y index=1, col sep=tab]
              {Data/TensionModified_Efficiency/Data_eps0055_std.dat};
       \addplot[thin,color=col1, dashed] table[x index=0, y index=1, col sep=tab]
              {Data/TensionModified_Efficiency/Data_eps011_std.dat};
       \addplot[thin,color=col2, solid, mark=o, mark size=1pt] table[x index=0, y index=1, col sep=tab]
              {Data/TensionModified_Efficiency/Data_eps022_std.dat};
       \addplot[thin,color=col3, solid, mark=star, mark size=1pt] table[x index=0, y index=1, col sep=tab]
              {Data/TensionModified_Efficiency/Data_eps044_std.dat};
       \addplot[thin,color=col4, solid, mark=diamond, mark size=1pt] table[x index=0, y index=1, col sep=tab]
              {Data/TensionModified_Efficiency/Data_eps088_std.dat};
       \addplot[thin,color=col5, solid] table[x index=0, y index=1, col sep=tab]
              {Data/TensionModified_Efficiency/Data_eps176_std.dat};
       \addplot[thin,color=col6, densely dashed] table[x index=0, y index=1, col sep=tab]
              {Data/TensionModified_Efficiency/Data_eps352_std.dat};
    \end{axis}
  \end{tikzpicture}  
}\hfill
\begin{tikzpicture}[font=\scriptsize]\ref{Efficiency_Tension_phi}
\end{tikzpicture}
\caption{Efficiency index for tension test in time step $n=280$ with estimator $\eta^{\varphi}$ and standard estimator on different meshes}
\label{fig:Efficiency_Tension}
\end{figure}
\vspace*{-10mm}
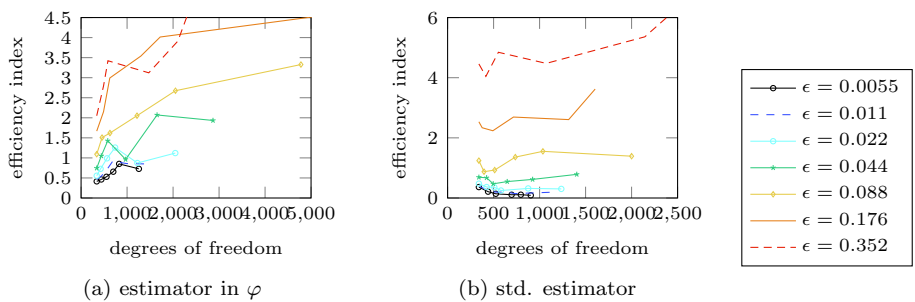
\begin{figure}[H]
    \definecolor{col0}{rgb}{0,0,0}
    \definecolor{col1}{rgb}{0.2,0.3,1}
    \definecolor{col2}{rgb}{0.5,1,1}
    \definecolor{col3}{rgb}{0.2,0.8,0.5}
    \definecolor{col4}{rgb}{0.9,0.8,0.3}
    \definecolor{col5}{rgb}{0.9,0.5,0.1}
    \definecolor{col6}{rgb}{0.9,0.1,0}
    \definecolor{col7}{rgb}{0.4,0.2,0.9}
\centering
\subfloat[estimator in $\varphi$]{
  \begin{tikzpicture}
    \begin{axis}[clip marker paths=true,
      ylabel near ticks, xlabel near ticks,
      ylabel={efficiency index},
      xlabel={degrees of freedom},
      xtick={0,1000,2000,3000,4000,5000},
      ytick={0,0.5,1,1.5,2,2.5,3,3.5,4,4.5},
      width=0.38\textwidth,font=\scriptsize,legend cell
      align=left, xmin=0, xmax=5000, ymin=0, ymax=4.5]
      \pgfplotsset{legend style={at={(0,1)}, anchor=north
          west}, legend columns=1, row sep=-4pt,legend to name=Efficiency_shear_phi}
      \addplot[thin,color=col0, solid, mark=o, mark size=1pt] table[x index=0, y index=1, col sep=tab]
              {Data/Shear_EfficiencyVar/Data_eps0055.dat};
      \addlegendentry{$\epsilon=0.0055$}
       \addplot[thin,color=col1, dashed] table[x index=0, y index=1, col sep=tab]
              {Data/Shear_EfficiencyVar/Data_eps011.dat};
      \addlegendentry{$\epsilon=0.011$}
       \addplot[thin,color=col2, solid, mark=o, mark size=1pt] table[x index=0, y index=1, col sep=tab]
              {Data/Shear_EfficiencyVar/Data_eps022.dat};
      \addlegendentry{$\epsilon=0.022$}
       \addplot[thin,color=col3, solid, mark=star, mark size=1pt] table[x index=0, y index=1, col sep=tab]
              {Data/Shear_EfficiencyVar/Data_eps044.dat};
      \addlegendentry{$\epsilon=0.044$}
       \addplot[thin,color=col4, solid, mark=diamond, mark size=1pt] table[x index=0, y index=1, col sep=tab]
              {Data/Shear_EfficiencyVar/Data_eps088.dat};
      \addlegendentry{$\epsilon=0.088$}
       \addplot[thin,color=col5, solid] table[x index=0, y index=1, col sep=tab]
              {Data/Shear_EfficiencyVar/Data_eps176.dat};
              \addlegendentry{$\epsilon=0.176$}
       \addplot[thin,color=col6, densely dashed] table[x index=0, y index=1, col sep=tab]
              {Data/Shear_EfficiencyVar/Data_eps352.dat};
      \addlegendentry{$\epsilon=0.352$}
 
    \end{axis}
  \end{tikzpicture}
}\hfill
\subfloat[std. estimator]{
  \begin{tikzpicture}
    \begin{axis}[clip marker paths=true,
      ylabel near ticks, xlabel near ticks,
      ylabel={efficiency index},
      xlabel={degrees of freedom},
      xtick={0,500,1000,1500,2000,2500},
      ytick={0,2,4,6,8,10},
      width=0.38\textwidth,font=\scriptsize,legend cell
      align=left, xmin=0, xmax=2500, ymin=0, ymax=6]
      \addplot[thin,color=col0, solid, mark=o, mark size=1pt] table[x index=0, y index=1, col sep=tab]
              {Data/Shear_EfficiencyVar/Data_eps0055_std.dat};
       \addplot[thin,color=col1, dashed] table[x index=0, y index=1, col sep=tab]
              {Data/Shear_EfficiencyVar/Data_eps011_std.dat};
       \addplot[thin,color=col2, solid, mark=o, mark size=1pt] table[x index=0, y index=1, col sep=tab]
              {Data/Shear_EfficiencyVar/Data_eps022_std.dat};
       \addplot[thin,color=col3, solid, mark=star, mark size=1pt] table[x index=0, y index=1, col sep=tab]
              {Data/Shear_EfficiencyVar/Data_eps044_std.dat};
       \addplot[thin,color=col4, solid, mark=diamond, mark size=1pt] table[x index=0, y index=1, col sep=tab]
              {Data/Shear_EfficiencyVar/Data_eps088_std.dat};
       \addplot[thin,color=col5, solid] table[x index=0, y index=1, col sep=tab]
              {Data/Shear_EfficiencyVar/Data_eps176_std.dat};
       \addplot[thin,color=col6, densely dashed] table[x index=0, y index=1, col sep=tab]
              {Data/Shear_EfficiencyVar/Data_eps352_std.dat};
    \end{axis}
  \end{tikzpicture}  
}\hfill
\begin{tikzpicture}[font=\scriptsize]\ref{Efficiency_shear_phi}
\end{tikzpicture}
\caption{Efficiency index for shear test in time step $n= 107$ with estimator $\eta^{\varphi}$ and standard estimator on different meshes}
\label{fig:Efficiency_shear}
\end{figure}
\vspace*{-5mm}
As is clearly visible in Figures~\ref{fig:Efficiency_Tension}
and~\ref{fig:Efficiency_shear},
the efficiency indices for the new estimator are robust with respect
to the variation of $\epsilon$ while the efficiency tends to zero for
the standard estimator.

\subsection{Adaptive refinement using both estimators $\eta^u$ and $\eta^{\varphi}$}

In this subsection, we investigate the adaptive refinement which is steered by both estimators, the estimator $\eta^{\varphi}$ from Section~\ref{Sec:EstimatorVI} and $\eta^u$ from Section~\ref{Sec:EstimatorEqu}. In the implementation, we normalize both estimators and add them before the marking strategy is called.  
In the following we solely show numerical results for the tension and
the shear tests because the L-shape test is modeled by inhomogeneous
Dirichlet boundary conditions on a small portion of the boundary. The
estimator $\eta^u$  correctly identifies the singularity induced by
this boundary condition and resolves the resulting singularity.
This is reasonable for the discretization error but contains a model error as the Dirichlet conditions imitate that the area is vertically clamped, see~\cite{Winkler2001}.

\subsubsection{Adaptively refined grids}

Comparing Figures~\ref{fig:tension_preRef4_Mesh6_Grid_etaUPhi} and~\ref{fig:shear_preRef4_Mesh5_Grid_etaUPhi}  with the Figures~\ref{fig:tension_preRef4_Mesh6_Grid_etaPhi} and~\ref{fig:shear_preRef4_Mesh5_Grid_etaPhi}, we see that the adaptive refinement is different. 
While for the tension test in
Figure~\ref{fig:tension_preRef4_Mesh6_Grid_etaUPhi} the crack path is
still well resolved, the influence of the estimator $\eta^u$ is
stronger for the shear test and leads to a strong refinement of the
origin of the crack and thus to less refinement of the crack path.
\vspace*{-5mm}
\begin{figure}[H]
\centering
\hfill
\subfloat[mesh at $n=310$]{\includegraphics[width=0.22\textwidth,clip,trim=20cm 4cm 20cm 4cm]{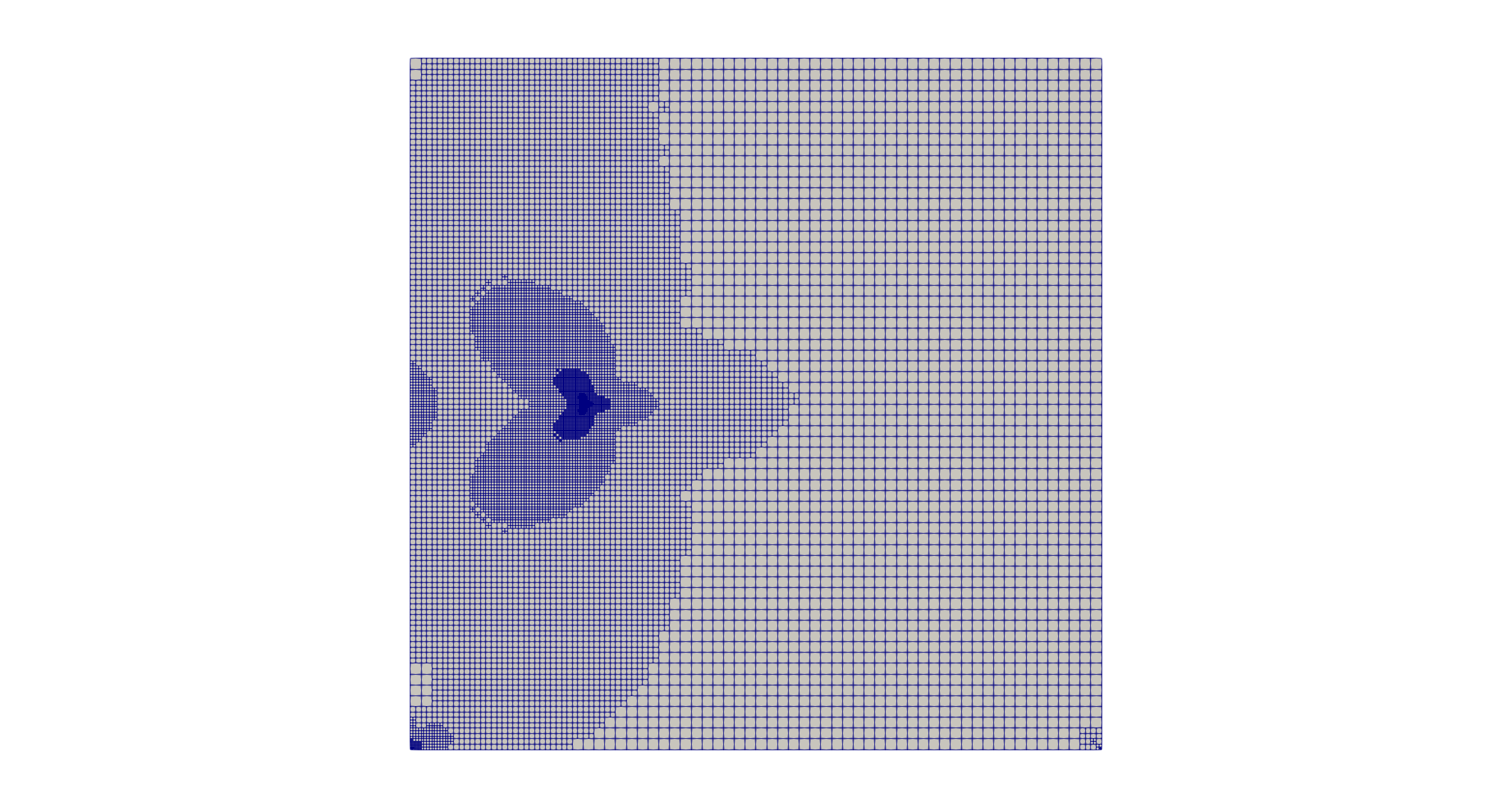}}\hfill
\subfloat[$\varphi$ at $n=310$]{\includegraphics[width=0.22\textwidth,clip,trim=20cm 4cm 20cm 4cm]{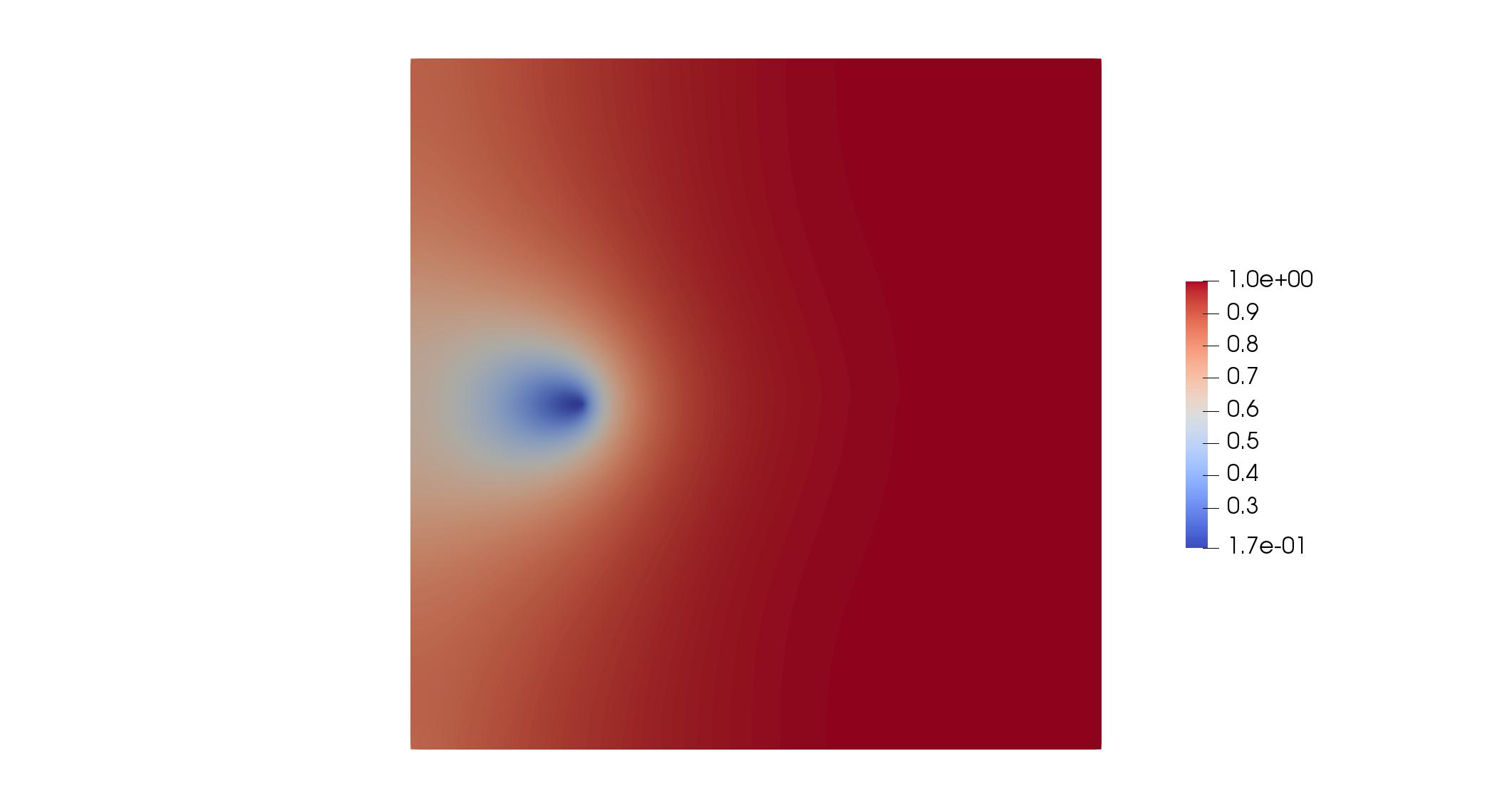}}\hfill
\subfloat[$u_1$ at $n=310$]{\includegraphics[width=0.22\textwidth,clip,trim=20cm 4cm 20cm 4cm]{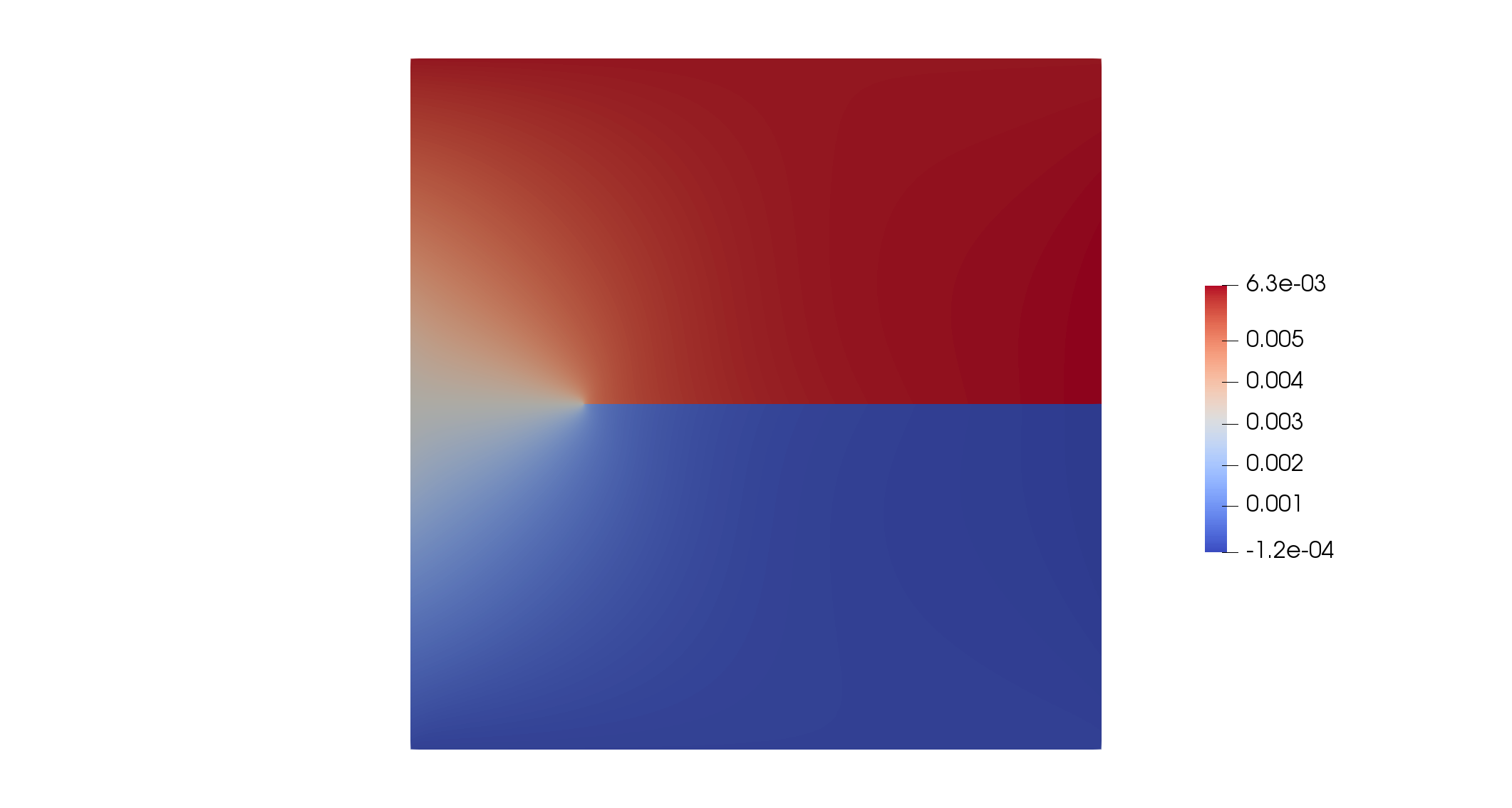}}\hfill \ \ \\
\hfill
\subfloat[mesh at $n=324$]{\includegraphics[width=0.22\textwidth,clip,trim=20cm 4cm 20cm 4cm]{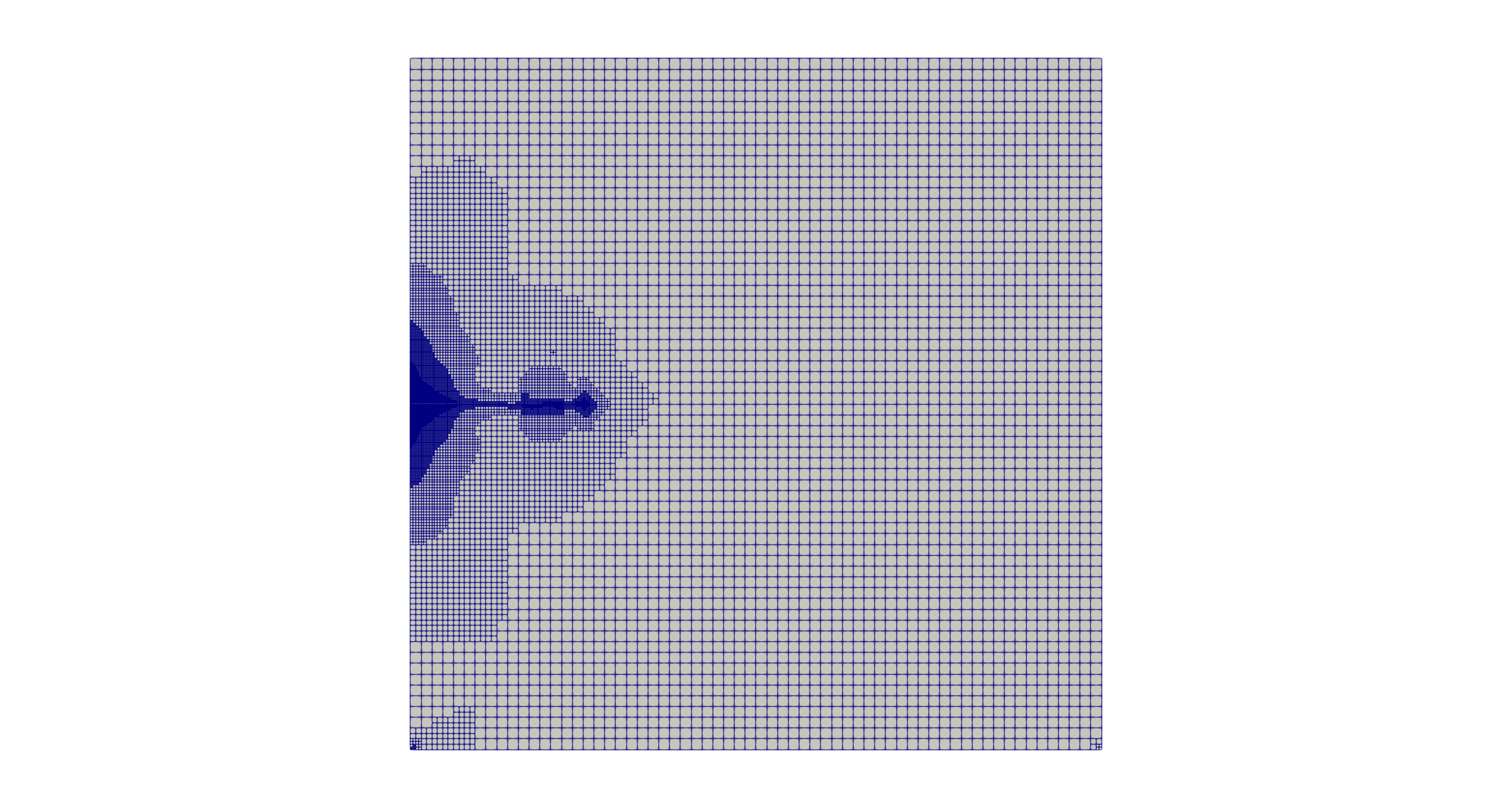}}\hfill
\subfloat[$\varphi$ at $n=324$]{\includegraphics[width=0.22\textwidth,clip,trim=20cm 4cm 20cm 4cm]{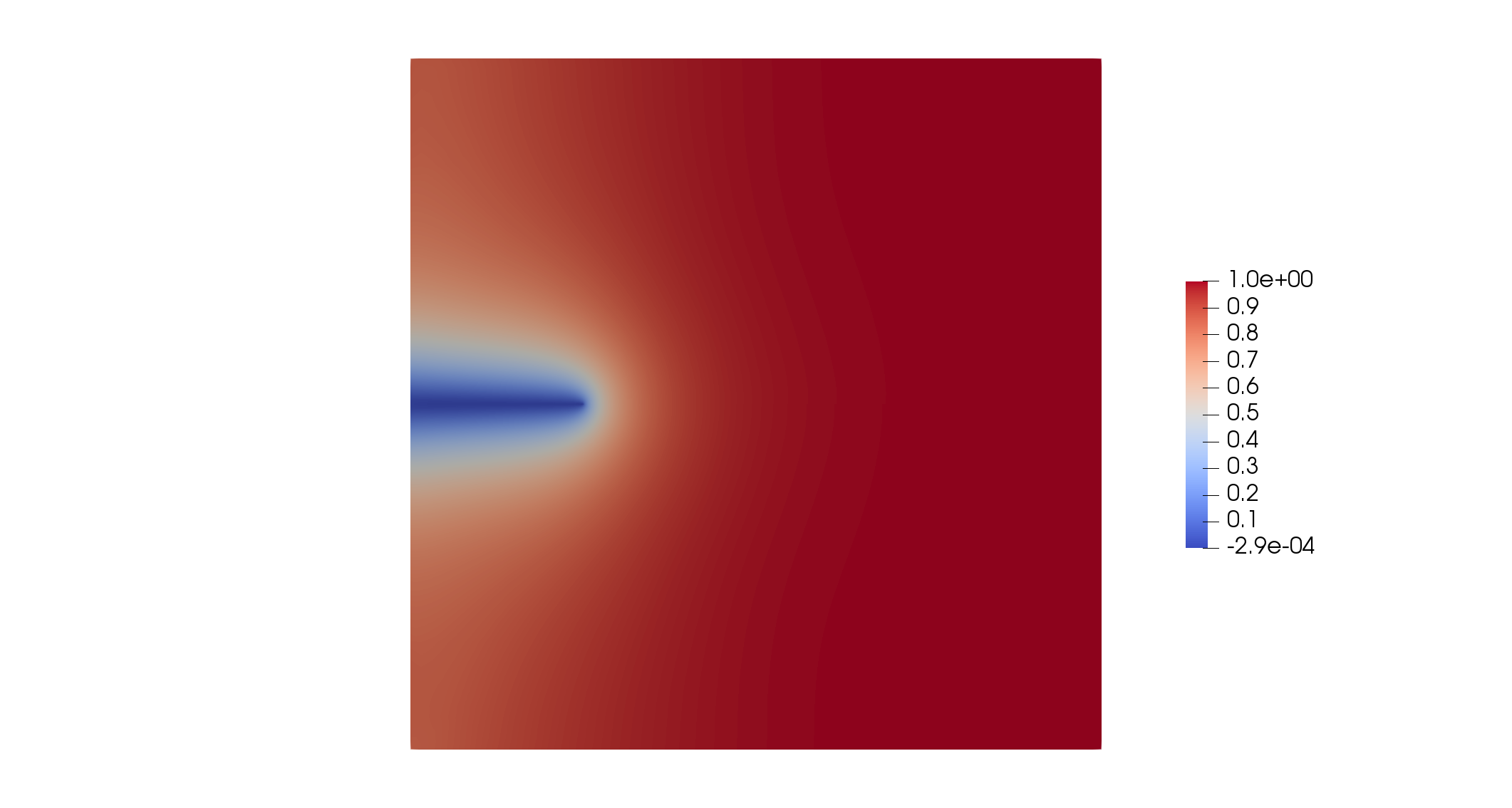}}\hfill
\subfloat[$u_1$ at $n=324$]{\includegraphics[width=0.22\textwidth,clip,trim=20cm 4cm 20cm 4cm]{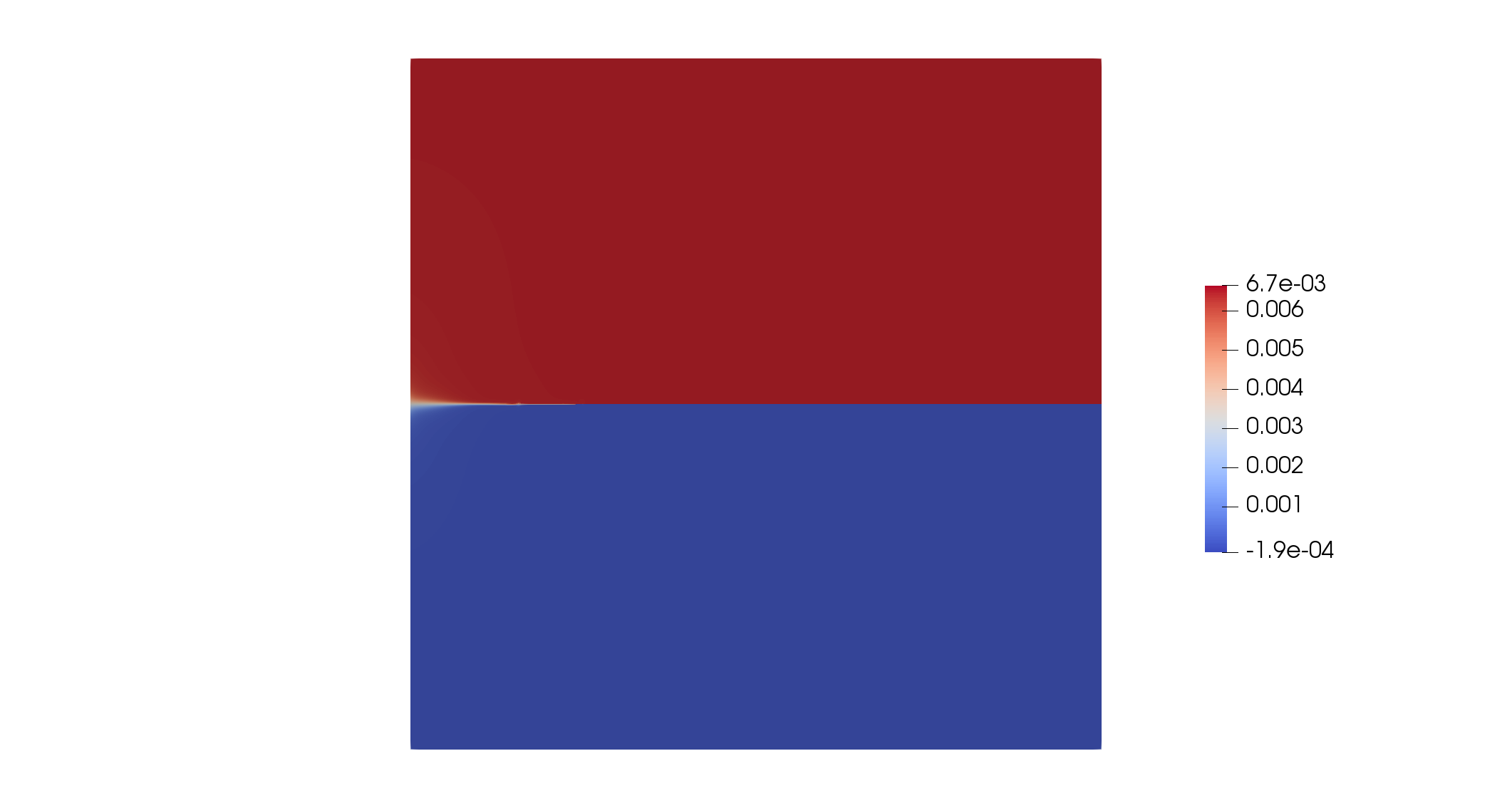}}\hfill \ \ \\
\caption{Tension test with $\epsilon=0.088$ at different time points after six adaptive refinement steps based on the estimators $\eta^{\varphi}$ and $\eta^u$.  Values of $\varphi \approx 1$ and $u_1 \gg 0$ are colored in red, values $\varphi \approx 0$ and $u_1 \ll 0$ are blue.}
\label{fig:tension_preRef4_Mesh6_Grid_etaUPhi}
\end{figure}
\vspace*{-15mm}
\begin{figure}[H]
\centering
\hfill
\subfloat[mesh at $n=120$]{\includegraphics[width=0.22\textwidth,clip,trim=20cm 4cm 20cm 4cm]{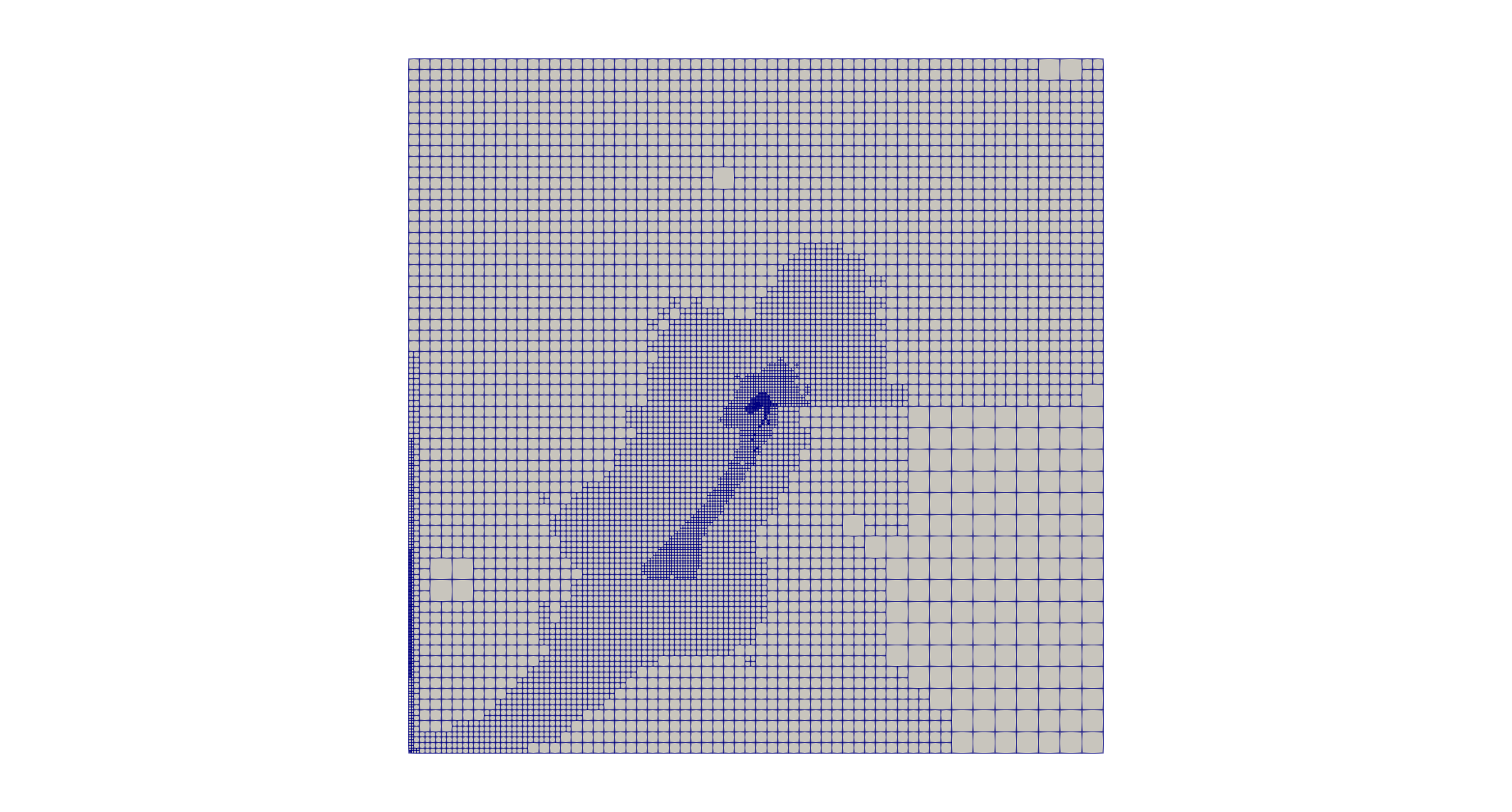}}\hfill
\subfloat[$\varphi$ at  $n=120$]{\includegraphics[width=0.22\textwidth,clip,trim=20cm 4cm 20cm 4cm]{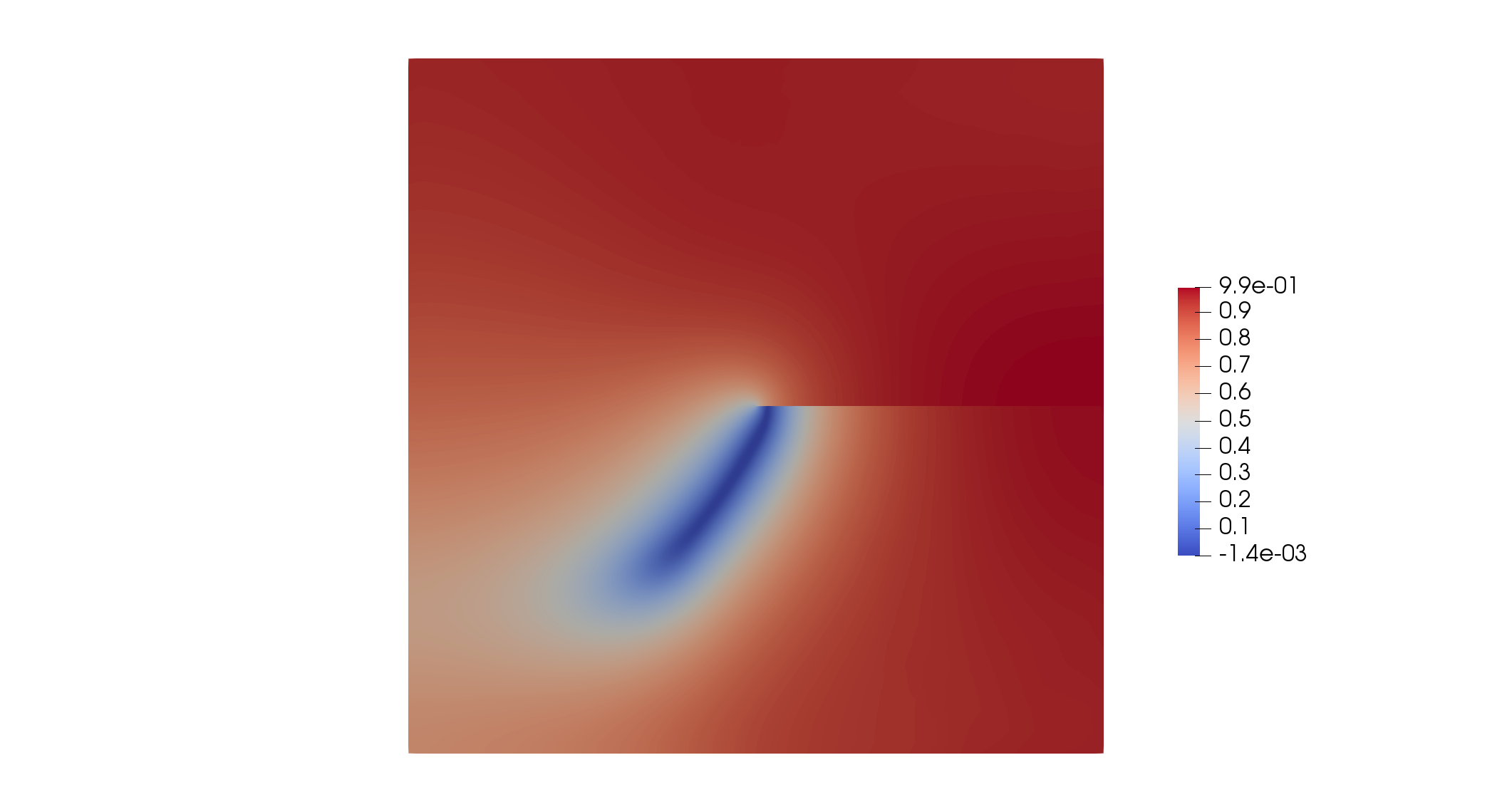}}\hfill
\subfloat[$u_1$ at $n=120$]{\includegraphics[width=0.22\textwidth,clip,trim=20cm 4cm 20cm 4cm]{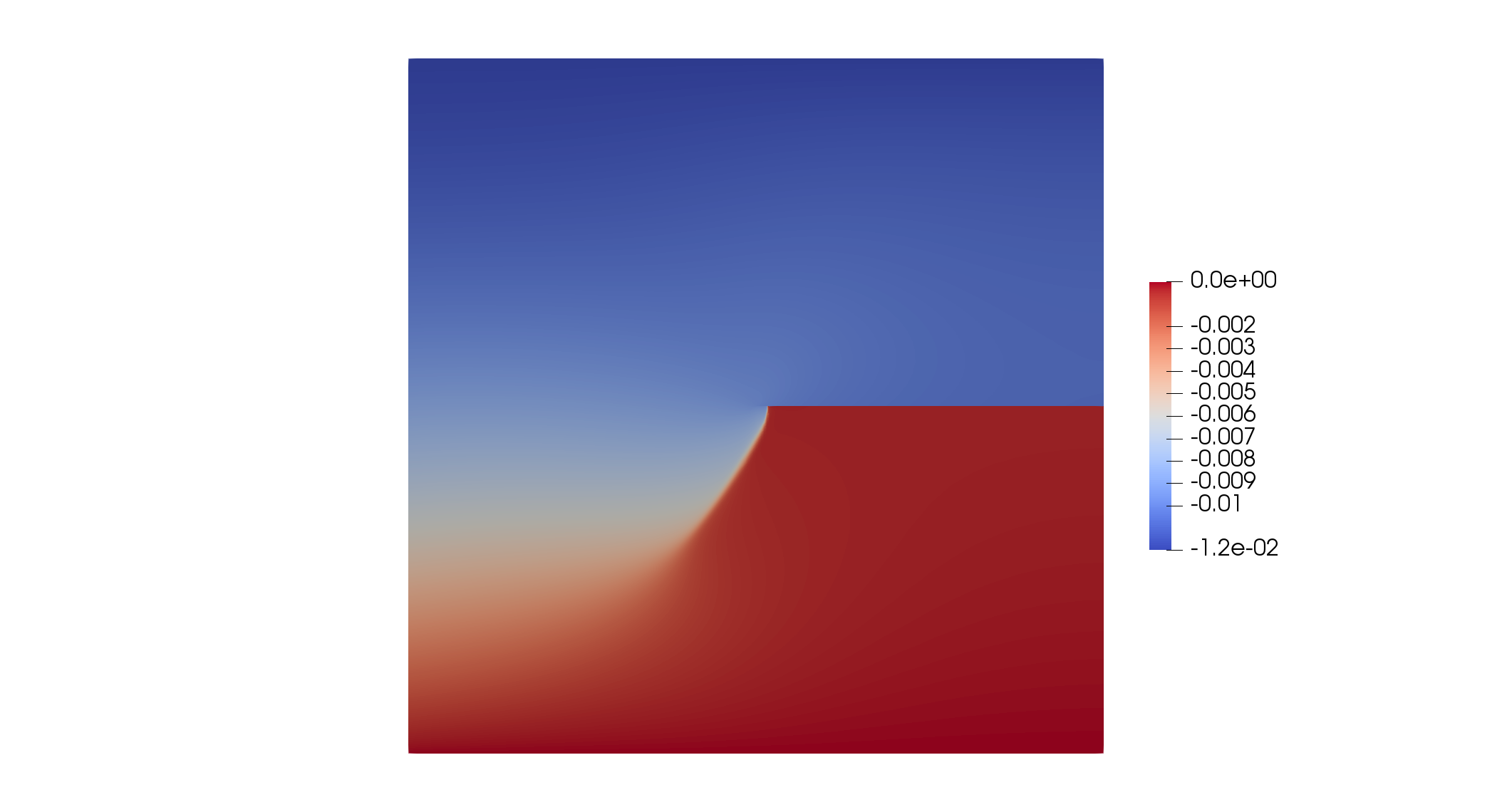}}\hfill \ \ \\
\hfill
\subfloat[mesh at $n=132$]{\includegraphics[width=0.22\textwidth,clip,trim=20cm 4cm 20cm 4cm]{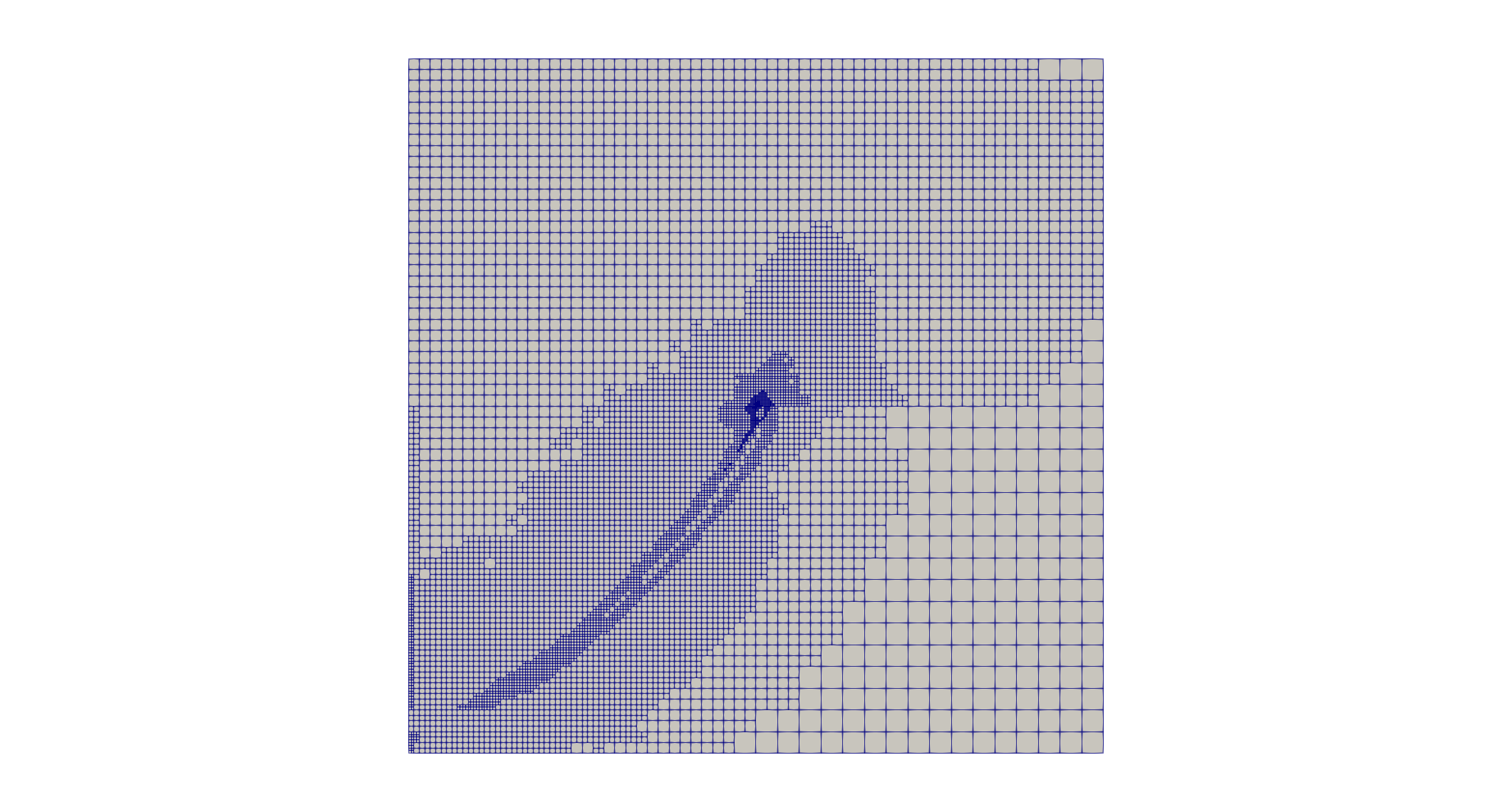}}\hfill
\subfloat[$\varphi$ at $n=132$]{\includegraphics[width=0.22\textwidth,clip,trim=20cm 4cm 20cm 4cm]{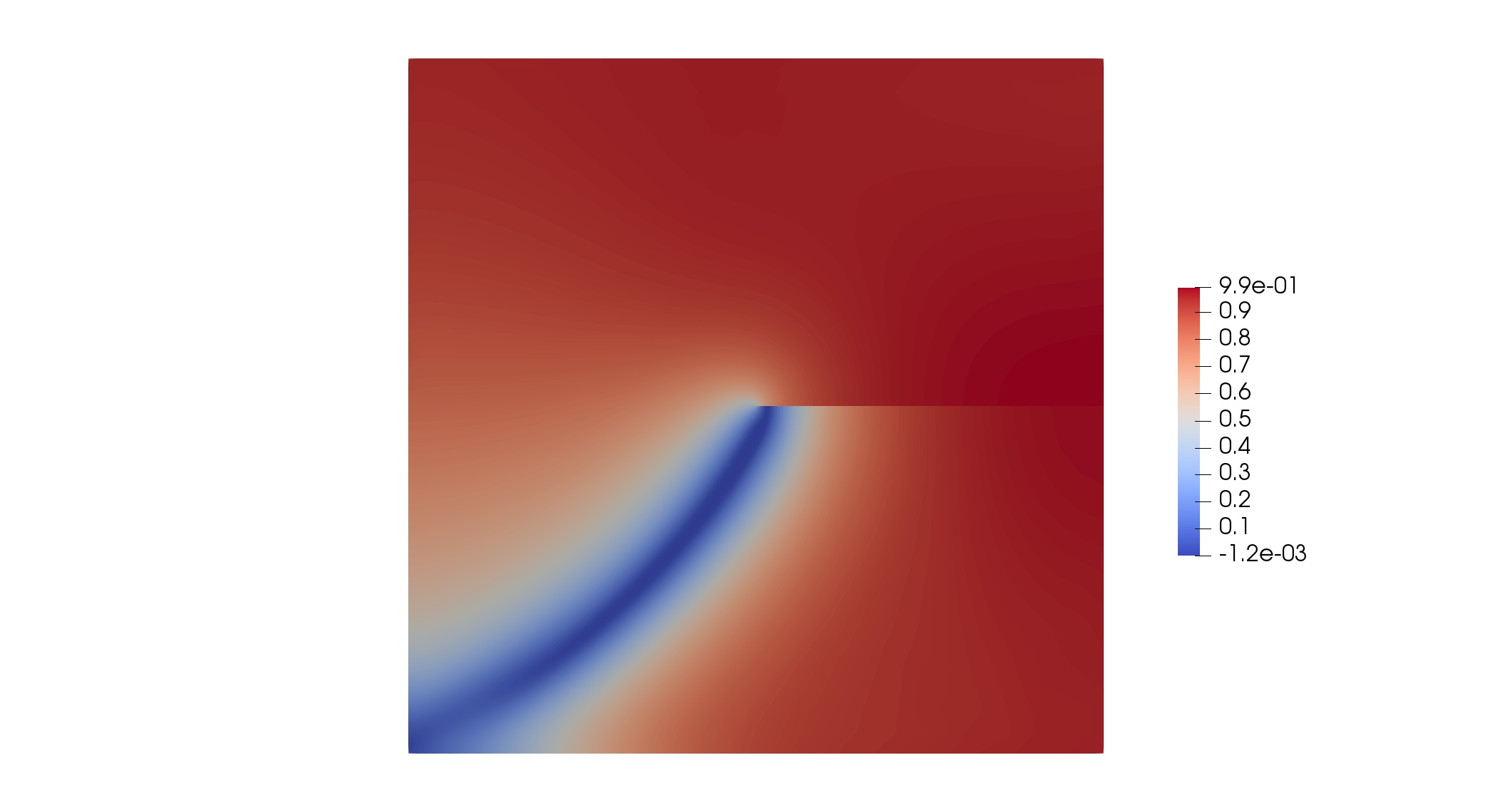}}\hfill
\subfloat[$u_1$ at $n=132$]{\includegraphics[width=0.22\textwidth,clip,trim=20cm 4cm 20cm 4cm]{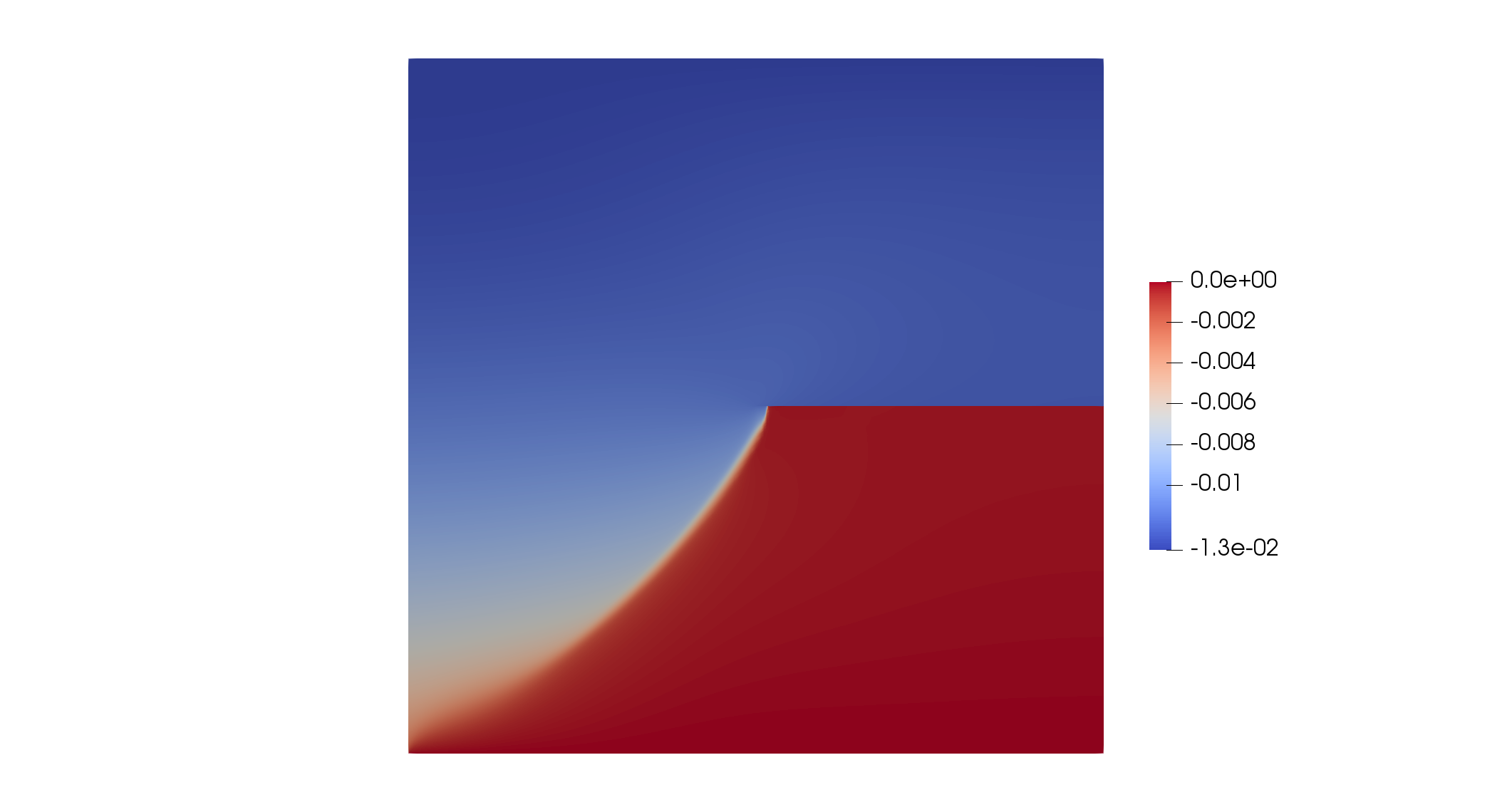}}\hfill \ \ \\
\caption{Shear test with $\epsilon=0.088$  at different time points after six adaptive refinement steps based on the estimators $\eta^{\varphi}$ and $\eta^u$.  Values of $\varphi \approx 1$ and $u_1 \gg 0$ are colored in red, values $\varphi \approx 0$ and $u_1 \ll 0$ are blue.}
\label{fig:shear_preRef4_Mesh5_Grid_etaUPhi}
\end{figure}
\vspace*{-15mm}

\subsubsection{Convergence in different error norms}

Finally, we show the convergence behavior for the tension and shear test using adaptive refinement steered by both estimators $\eta^{\varphi}$ and $\eta^u$ compared to uniform refinement. Especially for the shear test it is obvious that the influence of the estimator $\eta^u$ improve the convergence order for the error in $\boldsymbol{u}$.
\vspace*{-5mm}

\begin{figure}[H]
\centering
\subfloat{
    \begin{tikzpicture}
    \begin{loglogaxis}[width=0.35\textwidth,
        ylabel near ticks, xlabel near ticks,
        ylabel={error},
        xlabel={number of nodes},
        legend cell align=left,font=\scriptsize,row sep=-4pt,
      ]
      \addplot[color=red,mark=*,mark size=1pt]
              table[x expr=\thisrow{DOF}/2, y expr=sqrt(\thisrow{ERROR}), col sep=tab]
              {Data/TensionModified_Conv_eps088_uphiEst/ConvergenceT280/Data_phi.dat};
      \addlegendentry{adaptive}
      \addplot[color=blue,mark=*,mark size=1pt]
              table[x expr=\thisrow{DOF}/2, y expr=sqrt(\thisrow{ERROR}), col sep=tab]
              {Data/TensionModified_Conv_eps088_uphiEst/ConvergenceT280/Data_phi_uni.dat};
      \addlegendentry{uniform}        
    \end{loglogaxis}
  \end{tikzpicture}
}\hfill
\subfloat{
    \begin{tikzpicture}
    \begin{loglogaxis}[width=0.35\textwidth,
        ylabel near ticks, xlabel near ticks,
        ylabel={error},
        xlabel={number of nodes},
        ytick={0.04,0.06,0.1,0.14,0.18,0.22},
        yticklabel={
        \pgfkeys{/pgf/fpu=true}
        \pgfmathparse{exp(\tick)}
        \pgfmathprintnumber[fixed relative, precision=3]{\pgfmathresult}
        \pgfkeys{/pgf/fpu=false}
        },
        legend cell align=left,font=\scriptsize,row sep=-4pt,
        ymin=0.03, ymax=0.25
      ]
      \addplot[color=red,mark=*,mark size=1pt]
              table[x expr=\thisrow{DOF}/2, y expr=sqrt(\thisrow{ERROR}), col sep=tab]
              {Data/TensionModified_Conv_eps088_uphiEst/ConvergenceT280/Data_u.dat};
      \addlegendentry{adaptive}
      \addplot[color=blue,mark=*,mark size=1pt]
              table[x expr=\thisrow{DOF}/2, y expr=sqrt(\thisrow{ERROR}), col sep=tab]
              {Data/TensionModified_Conv_eps088_uphiEst/ConvergenceT280/Data_u_uni.dat};
      \addlegendentry{uniform}        
    \end{loglogaxis}
  \end{tikzpicture} 
}
\caption{Convergence in different error measures for tension test at $n=280$ with $\epsilon=0.088$ and adaptive refinement based on the estimators $\eta^{\varphi}$ and $\eta^u$. Left: energy norm in $\varphi$, Right:
  energy norm in $u$.}
\label{fig:Convergence_tension_2}
\end{figure}
\vspace*{-10mm}
\begin{figure}[H]
\centering
\subfloat{
      \begin{tikzpicture}
    \begin{loglogaxis}[width=0.35\textwidth,
        ylabel near ticks, xlabel near ticks,
        ylabel={error},
        xlabel={number of nodes},
        ytick={0.1,1,10},
        ymax=10,
        legend cell align=left,font=\scriptsize,row sep=-4pt,
      ]
      \addplot[color=red,mark=*,mark size=1pt]
              table[x expr=\thisrow{DOF}/2, y expr=sqrt(\thisrow{ERROR}), col sep=tab]
              {Data/Shear_Conv_eps088_uphiEst/Data_phi.dat};
      \addlegendentry{adaptive}
      \addplot[color=blue,mark=*,mark size=1pt]
              table[x expr=\thisrow{DOF}/2, y expr=sqrt(\thisrow{ERROR}), col sep=tab]
              {Data/Shear_Conv_eps088_uphiEst/Data_phi_uni.dat};
      \addlegendentry{uniform}        
    \end{loglogaxis}
  \end{tikzpicture}
}\hfill
\subfloat{
   \begin{tikzpicture}
    \begin{loglogaxis}[width=0.35\textwidth,
        ylabel near ticks, xlabel near ticks,
        ylabel={error},
        xlabel={number of nodes},
        ytick={0.1,1},
        ymax=1,
        legend cell align=left,font=\scriptsize,row sep=-4pt
      ]
      \addplot[color=red,mark=*,mark size=1pt]
              table[x expr=\thisrow{DOF}/2, y expr=sqrt(\thisrow{ERROR}), col sep=tab]
              {Data/Shear_Conv_eps088_uphiEst/Data_u.dat};
      \addlegendentry{adaptive}
      \addplot[color=blue,mark=*,mark size=1pt]
              table[x expr=\thisrow{DOF}/2, y expr=sqrt(\thisrow{ERROR}), col sep=tab]
              {Data/Shear_Conv_eps088_uphiEst/Data_u_uni.dat};
      \addlegendentry{uniform}        
    \end{loglogaxis}
  \end{tikzpicture}  
}
\caption{Convergence in different error measures for shear test at $n=107$ with $\epsilon=0.088$ and adaptive refinement based on the estimators $\eta^{\varphi}$ and $\eta^u$. Left: energy norm in $\varphi$, Right:
  energy norm in $u$.}
\label{fig:Convergence_shear_2}
\end{figure}
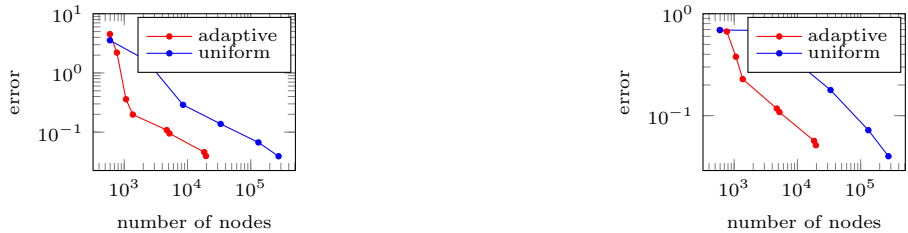
\vspace*{-5mm}

\small{\acknowledgment}{This work was funded by the Deutsche Forschungsgemeinschaft (DFG, German Research
Foundation) -- Projektnummer 392587580 -- SPP 1748}

\bibliographystyle{abbrv}
\bibliography{lit}   

\end{document}